\newcommand{\googlebooks}[1]{(preview at \href{https://books.google.com/books?id=#1}{google books})}
\newcommand{\numdam}[1]{}
\DeclareMathAlphabet{\mathpzc}{OT1}{pzc}{m}{it}
\def\semicolon{;}
\def\applytolist#1{
    \expandafter\def\csname multi#1\endcsname##1{
        \def\multiack{##1}\ifx\multiack\semicolon
            \def\next{\relax}
        \else
            \csname #1\endcsname{##1}
            \def\next{\csname multi#1\endcsname}
        \fi
        \next}
    \csname multi#1\endcsname}
\def\calc#1{\expandafter\def\csname c#1\endcsname{{\mathcal #1}}}
\def\bbc#1{\expandafter\def\csname bb#1\endcsname{{\mathbb #1}}}
\def\bfc#1{\expandafter\def\csname bf#1\endcsname{{\mathbf #1}}}
\def\sfc#1{\expandafter\def\csname s#1\endcsname{{\sf #1}}}
\def\fc#1{\expandafter\def\csname f#1\endcsname{{\mathfrak #1}}}
\def\rmc#1{\expandafter\def\csname rm#1\endcsname{{\mathrm #1}}}
\def\fixtikzforbreqn#1#2{%
  \protected\edef#1{\noexpand\ifmmode\mathchar\the\mathcode`#2 \noexpand\else#2\noexpand\fi}%
}
\tikzset{vertex/.style = {shape=circle,draw,fill=black,inner sep=0pt,minimum size=5pt}}
\tikzset{edge/.style = {->,> = latex', bend right}}
\tikzset{
	super thick/.style={line width=3pt}
}
\tikzset{
    quadruple/.style args={[#1] in [#2] in [#3] in [#4]}{
        #1,preaction={preaction={preaction={draw,#4},draw,#3}, draw,#2}
    }
}
\tikzstyle{shaded}=[fill=red!10!blue!20!gray!30!white]
\tikzstyle{unshaded}=[fill=white]
\tikzstyle{empty box}=[circle, draw, thick, fill=white, opaque, inner sep=2mm]
\tikzstyle{annular}=[scale=.7, inner sep=1mm, baseline]
\tikzstyle{rectangular}=[scale=.75, inner sep=1mm, baseline=-.1cm]
\tikzstyle{mid>}=[decoration={markings, mark=at position 0.5 with {\arrow{>}}}, postaction={decorate}]
\tikzstyle{mid<}=[decoration={markings, mark=at position 0.5 with {\arrow{<}}}, postaction={decorate}]
\tikzstyle{over}=[double, draw=white, super thick, double=]
\tikzstyle{snake}=[decorate, decoration={snake, segment length=1mm, amplitude=.3mm}]
\tikzstyle{saw}=[decorate, decoration={saw, segment length=.7mm, amplitude=.25mm}]
\tikzset{super thick/.style={line width=3pt}}
\tikzstyle{knot}=[preaction={super thick, white, draw}]
\tikzstyle{coupon}=[draw, very thick, rectangle, rounded corners=5pt]
\tikzset{Rightarrow/.style={double equal sign distance,>={Implies},->},
triplecd/.style={-,preaction={draw,Rightarrow}},
quadruplecd/.style={preaction={draw,Rightarrow,
shorten >=0pt
},
shorten >=1pt,
-,double,double
distance=0.2pt}}
\tikzset{
    tripleline/.style args={[#1] in [#2] in [#3]}{
        #1,preaction={preaction={draw,#3},draw,#2}
    }
}
\tikzstyle{triple}=[tripleline={[line width=.15mm,black] in
\tikzset{
    quadrupleline/.style args={[#1] in [#2] in [#3] in [#4]}{
        #1,preaction={preaction={preaction={draw,#4},draw,#3}, draw,#2}
    }
}
\tikzstyle{quadruple}=[quadrupleline={[line width=.3mm,white] in
\newcommand{\roundNbox}[6]{
	\draw[rounded corners=5pt, very thick, #1] ($#2+(-#3,-#3)+(-#4,0)$) rectangle ($#2+(#3,#3)+(#5,0)$);
	\coordinate (ZZa) at ($#2+(-#4,0)$);
	\coordinate (ZZb) at ($#2+(#5,0)$);
	\node at ($1/2*(ZZa)+1/2*(ZZb)$) {#6};
}
\newcommand{\tikzmath}[2][]
     {\vcenter{\hbox{\begin{tikzpicture}[#1]#2
                     \end{tikzpicture}}}
     }
\newcommand{\xMapsto}[2][]{\ext@arrow 0599{\Mapstofill@}{#1}{#2}}
\def\Mapstofill@{\arrowfill@{\Mapstochar\Relbar}\Relbar\Rightarrow}
\theoremstyle{plain}
\newtheorem{thm}{Theorem}[section]
\newtheorem*{thm*}{Theorem}
\newtheorem{thmalpha}{Theorem}
\newtheorem{cor}[thm]{Corollary}
\newtheorem*{cor*}{Corollary}
\newtheorem*{conj*}{Conjecture}
\newtheorem{lem}[thm]{Lemma}
\newtheorem*{lem*}{Lemma}
\newtheorem{prop}[thm]{Proposition}
\newtheorem*{quest*}{Question}
\newtheorem*{claim*}{Claim}
\theoremstyle{definition}
\newtheorem{defn}[thm]{Definition}
\newtheorem{sub-ex}[thm]{Sub-Example}
\newtheorem{counter-ex}[thm]{Counter-Example}
\newtheorem{rem}[thm]{Remark}
\newtheorem*{rem*}{Remark}
\definecolor{dark-red}{rgb}{0.7,0.25,0.25}
\definecolor{dark-blue}{rgb}{0.15,0.15,0.55}
\definecolor{medium-blue}{rgb}{0,0,.8}
\definecolor{DarkGreen}{RGB}{0,150,0}
\definecolor{RealPurple}{RGB}{153,0,255}
\definecolor{rho}{named}{red}
\newcommand{\id}{\operatorname{id}}
\newcommand{\Irr}{\operatorname{Irr}}
\newcommand{\op}{\operatorname{op}}
\newcommand{\Tr}{\operatorname{Tr}}
\newcommand{\Hom}{\operatorname{Hom}}
\newcommand{\End}{\operatorname{End}}
\DeclareMathOperator{\ev}{ev}
\DeclareMathOperator{\coev}{coev}
\newcommand{\FusCat}{\mathsf{FusCat}}
\def\altdb{\vadjust{\vbox to 0pt{\vss\hbox{\kern \hsize
\quad{\dbend}}\kern\baselineskip\kern-10pt}}}
\newcommand{\noshow}[1]{}
\renewcommand{\MR}[1]{}
\newcommand{\x}{0.3cm }
\newcommand{\h}{6}
\newcommand{\lefttube}[6][]{ 
\ifnum \numexpr 360-#2 > #3
\tikzmath[#1]{
\draw (0,0) -- +(0,#4*#5) arc (180:#2:#4*#6 and #4) -- +(0,-#4*#5) arc(#2:180:#4*#6 and #4);
\draw[dashed] (0,0) arc (180:{360-#2}:#4*#6 and #4);
\draw ({cos(360-#2)*#4*#6+#4*#6},{sin(360-#2)*#4*#6-#4}) arc ({360-#2}:#3:#4*#6 and #4) -- +(0,#4*#5) arc (#3:180:#4*#6 and #4);
}
\else
\tikzmath[#1]{
\draw (0,0) -- +(0,#4*#5) arc (180:#2:#4*#6 and #4) -- +(0,-#4*#5) arc(#2:180:#4*#6 and #4);
\draw[dashed] (0,0) arc (180:#3:#4*#6 and #4) -- +(0,{#4*#5-2*sin(#3)*#4});
\draw ({cos(#3)*#4*#6+#4*#6},{-sin(#3)*#4+#4*#5}) -- +(0,{2*sin(#3)*#4}) arc (#3:180:#4*#6 and #4);
}
\fi
}
\newcommand{\righttube}[6][]{ 
\lefttube[xscale=-1,#1]{\numexpr 540-#2}{\numexpr 180-#3}{#4}{#5}{#6}
}
\newcommand{\tubeguts}[5]{
\ifnum \numexpr 360-#1 > #2

\draw (0,0) -- +(0,#3*#4) arc (180:#1:#3*#5 and #3) -- +(0,-#3*#4) arc(#1:180:#3*#5 and #3);
\draw[dashed] (0,0) arc (180:{360-#1}:#3*#5 and #3);
\draw ({cos(360-#1)*#3*#5+#3*#5},{sin(360-#1)*#3*#5-#3}) arc ({360-#1}:#2:#3*#5 and #3) -- +(0,#3*#4) arc (#2:180:#3*#5 and #3);

\else

\draw (0,0) -- +(0,#3*#4) arc (180:#1:#3*#5 and #3) -- +(0,-#3*#4) arc(#1:180:#3*#5 and #3);
\draw[dashed] (0,0) arc (180:#2:#3*#5 and #3) -- +(0,{#3*#4-2*sin(#2)*#3});
\draw ({cos(#2)*#3*#5+#3*#5},{-sin(#2)*#3+#3*#4}) -- +(0,{2*sin(#2)*#3}) arc (#2:180:#3*#5 and #3);

\fi
}
\newcommand{\saddleguts}[6]{
\draw ({#3*#6*cos(#1)},{#3*sin(#1)}) arc (#1:360:{#3*#6} and #3);
\draw[dashed] ({#3*#6},0) arc (0:#2:{#3*#6} and #3) -- ({#3*#6*cos(#2)},{#3*(#4-sin(#2))});
\draw ({#3*#6*cos(#2)},{#3*(#4-sin(#2))}) -- ({#3*#6*cos(#2)},{#3*#4+#3*sin(#2)}) -- +({#3*#5},0) -- +({#3*#5},{-#3*#4}) arc (#2:{360-#1}:{#3*#6} and #3); 
\draw[dashed] ({#3*#5},{#3}) arc (90:180:{#3*#6} and #3);
\draw ({#3*#5-#3*#6},0) arc (180:#1:{#3*#6} and #3) -- +(0,{#3*#4}) -- +(-{#3*#5},{#3*#4}) -- +(-{#3*#5},0);
\draw ({#3*#5-#3*#6},0) arc (0:180:{#3*(#5-2*#6)*.5});
}
\newcommand{\undersaddleguts}[6]{
\draw (0,0) -- +(0,#3*#4) arc (#1:#2:#3*#6 and #3) -- +(0,-#3*#4) -- (0,{(sin(#2)-sin(#1))*#3});
\draw[dashed] (0,{(sin(#2)-sin(#1))*#3}) -- +({((cos(#2)-cos(#1))*#6+#5)*#3},0) -- +({((cos(#2)-cos(#1))*#6+#5)*#3},{#3*(#4-sin(#2)+sin(#1))});
\draw ({((cos(#2)-cos(#1))*#6+#5)*#3}, {#3*#4}) -- 
({((cos(#2)-cos(#1))*#6+#5)*#3},{(sin(#2)-sin(#1))*#3+#3*#4}) arc (#2:{360+#1}:#3*#6 and #3) -- +(0,-#3*#4) -- (0,0);
\draw ({(1-cos(#1))*#3*#6},{(-sin(#1))*#3+#3*#4}) arc (-180:0:{#3*(#5-4)*.5});
}
\newcommand{\saddle}[7][]{
\ifnum \numexpr 1 > 0
\tikzmath[#1]{
\saddleguts{#2}{#3}{#4}{#5}{#6}{#7};
}
\else
\tikzmath[xscale=-1, #1]{ 
\saddleguts{540-#2}{180-#3}{#4}{#5}{#6}{#7};
}
\fi
}
\title{Anchored planar algebras and 3-categorical graphical calculus}
\author{Brett Hungar}
\date{\today}
\begin{document}

\maketitle


\begin{abstract}
Anchored planar algebras, a generalized notion of Vaughan Jones' planar algebras, have recently seen use in higher category theory, functional analysis, and TQFT applications. These algebras are equipped with a natural 3-dimensional graphical calculus. We compare this graphical calculus with the 3-dimensional graphical calculus associated to tricategories, and we show that anchored planar algebras can be thought of as living in a particular tricategory. This allows for more general techniques to be applied to anchored planar algebras and expands the types of diagrams that anchored planar algebras can interpret.
\end{abstract}

\section{Introduction}

Anchored planar algebras were first introduced in \cite{HPT2016}, building off Vaughan Jones' notion of planar algebras \cite{jones}. These algebras have already seen use in higher category theory and functional analysis as in \cite{HPTclassification}, and related graphical calculi have applications in physics \cite{physics,physics2}. They are also related to notions of categorified trace, such as the categorified traces discussed in \cite{HPT2015} and \cite{ponto}. Anchored planar algebras, as algebras over an operad, have a natural graphical calculus arising from that operad. This graphical calculus can be seen to be 3-dimensional by viewing anchored planar tangles as strings on tubes that are allowed to merge and twist. The prototypical example of 3-dimensional graphical calculus comes from 3-categories themselves, viewing objects as regions, 1-cells as sheets, 2-cells as strings, and 3-cells as junctions. It is reasonable to wonder whether there is a connection between these two calculi. In this work, we show that we can interpret the operadic graphical calculus within the graphical calculus of 3-categories.

\begin{thmalpha}\label{thm:3dapa}
    For every planar pivotal 3-category $\cC$, objects $A,B$, morphisms $M:A \to B$ and $M^*:B\to A$, and adjoint pair $F:M\boxtimes_B M^* \to A$, $F^*:A \to M\boxtimes_B M^*$ with isomorphisms
    \[ \tau^\ell:(\id_M\boxtimes -)\circ F \to (-^*\boxtimes \id_M^*)\circ F,\ \ \tau^r:F^*\circ(\id_M\boxtimes -) \to F^*\circ(-^*\boxtimes \id_M^*), \]
    satisfying certain axioms, we can construct an anchored planar algebra internal to $\End(1_A)$ which interprets anchored planar tangles via the graphical calculus of $\cC$.
\end{thmalpha}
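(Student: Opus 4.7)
The plan is to proceed in three stages: identify the underlying algebra objects $V_n \in \End(1_A)$, interpret each elementary piece of an anchored planar tangle as a 3-categorical operation built from the given data, and finally verify coherence and well-definedness under isotopy and tangle composition.

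For the first stage, each $V_n$ is built by composing $n$ copies of $F^*$ along the $\boxtimes_A$-composition into a 2-cell $1_A \Rightarrow (M\boxtimes_B M^*)^{\boxtimes_A n}$, and then capping off with $n$ copies of $F$ to return to $1_A$; equivalently, $V_n$ is the 2-endomorphism of $1_A$ obtained by the pattern of alternating $F^*$'s and $F$'s dictated by an $n$-disk. The anchor data on a disk selects one of the $2n$ boundary strands and thus pins down a cyclic ordering of the $F$'s and $F^*$'s that appear.

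For the second stage, I would decompose an anchored planar tangle into a finite list of elementary generators: horizontal strands on a tube, cups and caps where a tube is born or dies, saddles where two tubes merge or split, and elementary twists of a tube. Each generator is then assigned a 3-morphism via the planar pivotal graphical calculus of $\cC$ that was set up earlier in the paper: strands on a tube become identities on the relevant $(M,M^*)$ composites, tube cups/caps become units and counits of the $(F,F^*)$ adjunction, saddles become the pivotal evaluation/coevaluation for the $(M,M^*)$ adjunction in $\cC$, and elementary twists become applications of $\tau^\ell$ or $\tau^r$. Composing these generators according to the combinatorics of the tangle produces a 3-morphism $V_{n_1}\otimes\cdots\otimes V_{n_k}\Rightarrow V_{n_0}$ in $\End(1_A)$.

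For the third stage I would show the assignment is invariant under isotopy of anchored planar tangles and is functorial with respect to operadic composition. Isotopy invariance reduces to checking a finite list of local moves — the Reidemeister-like moves for strings on tubes, the Morse moves for cups/caps/saddles on tube surfaces, the compatibility of twists with saddles, and the moves that slide the anchor around a disk — and each such move translates to either a coherence identity in a planar pivotal 3-category (established in the earlier part of the paper) or to one of the axioms imposed on $(F,F^*,\tau^\ell,\tau^r)$. Operadic compositionality follows from the strict functoriality of pasting in $\cC$.

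The main obstacle is the isotopy invariance step, and in particular the axioms governing how $\tau^\ell$ and $\tau^r$ interact with saddles and with each other: these twist isomorphisms are what track the monodromy of the anchor under rotation of a tube, and matching them against the operadic moves for anchored planar tangles is exactly why the theorem has to impose ``certain axioms'' on them. The strategy is to present anchored planar tangles by generators (strands, cups/caps, saddles, twists, anchor relocations) and relations modulo isotopy, and then to verify each relation via the corresponding 3-categorical move, using the assumed axioms precisely where pivotality and adjoint naturality in $\cC$ are not already enough.
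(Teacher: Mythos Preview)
Your proposal has a genuine gap in the first stage and a significant methodological divergence in the third.

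The box spaces $V_n$ you describe are wrong. You build $V_n$ out of $n$ copies of $F$ and $n$ copies of $F^*$ alone, which yields something like $n$ parallel bare tubes. An anchored planar algebra assigns to the $n$-box a single tube carrying $n$ strands, and those strands must come from an additional piece of data you never introduce: a self-dual endomorphism $x\in\End(M)$. In the paper the box spaces are $\cP[n]=F\circ x^{n}\circ F^*$, i.e.\ $\Tr(x^n)$ --- one application of $F^*$, then $n$ copies of $x$ running along $M$, then one application of $F$. Without $x$ there is nothing for the cup and cap generators of the planar operad to act on, and your interpretation of ``horizontal strands on a tube'' has no content. This also explains why the actual theorem statement (in its precise form as Theorem~\ref{thm:planar-alg-in-3-cat}) quantifies over such a self-dual $x$.

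Even with the correct $\cP[n]$, your third stage is far harder than necessary. You propose to present the anchored planar operad by elementary pieces and isotopy moves and then check every move by hand in $\cC$. The paper instead invokes the finite generators-and-relations presentation of anchored planar algebras established in \cite{HPT2016}: four families of generators $\eta,\alpha_i,\tilde\alpha_i,\omega_{i,j}$ subject to nine axioms (C1)--(C9). One then identifies $\eta$ with the unit of $F\dashv F^*$, $\alpha_i,\tilde\alpha_i$ with the (co)evaluation of the self-dual $x$, and $\omega_{i,j}$ with the pair-of-pants multiplication together with the half-traciators. Axioms (C1)--(C6) drop out of naturality and the zig-zag identities; (C7) is associativity of the $\epsilon^\ell$-multiplication; and (C8), (C9) are exactly the two imposed compatibilities~(\ref{eq:braid_axiom}) and~(\ref{eq:axiom1}). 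This reduces the entire isotopy-invariance problem to a short finite checklist, which is what makes the ``certain axioms'' in the statement so economical --- only two nontrivial conditions on $\tau^\ell,\tau^r$ are needed, rather than the open-ended list of local moves your outline anticipates.
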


Moreover, all anchored planar algebras in braided fusion categories arise this way.

\begin{thmalpha}\label{thm:main}
    For every anchored planar algebra $\cP$, there is a 3-category $\cC$ and objects as in Theorem \ref{thm:3dapa} such that the anchored planar algebra internal to the graphical calculus of $\cC$ is equivalent to $\cP$. 
\end{thmalpha}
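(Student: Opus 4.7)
I would prove this as a reconstruction theorem, leveraging the (expected) fact that an anchored planar algebra $\cP$ internal to a braided fusion category $\cV$ is equivalent to a pivotal module tensor category $\cT$ over $\cV$ equipped with a distinguished self-dual generating object $X \in \cT$; this is analogous to the Jones correspondence between planar algebras and pivotal tensor categories with a generator, and is already implicit in the literature introducing APAs. The target 3-category $\cC$ will then be (a suitable piece of) the Morita 3-category of $\cV$: 0-cells are pivotal module tensor categories over $\cV$, 1-cells are pivotal bimodule categories, 2-cells are bimodule functors, and 3-cells are bimodule natural transformations. This setting is attractive because it comes pre-equipped with a planar pivotal 3-categorical structure and because $\End(1_A)$ will automatically be a braided fusion category via Eckmann--Hilton.

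\textbf{Construction and verification.} Starting from $\cP$, first extract the pivotal module tensor category $\cT$ over $\cV$ with self-dual generator $X$ by an idempotent-completion/principal-graph style construction. Inside $\cC$, set $A = \cV$ (as a module tensor category over itself, so that $\End(1_A) = \cV$) and $B = \cT$. Take $M \colon A \to B$ to be the pivotal bimodule category determined by $X$, concretely the module functor $\cV \to \cT$ sending $1 \mapsto X$, regarded as a $\cV$-$\cT$ bimodule category; let $M^*$ be its conjugate. The adjoint pair $F \colon M \boxtimes_B M^* \to 1_A$ and $F^* \colon 1_A \to M \boxtimes_B M^*$ is the evaluation/coevaluation coming from the self-duality of $X$ in $\cT$, and the isomorphisms $\tau^\ell, \tau^r$ are built from the pivotal structure of $\cT$ together with the cyclic/rotational data of the anchored planar operad. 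Once this data is assembled, Theorem~\ref{thm:3dapa} produces an anchored planar algebra $\cP'$ internal to $\End(1_A) = \cV$. The equivalence $\cP' \cong \cP$ can then be checked on a generating set of anchored planar tangles (caps, cups, multiplications, and anchor tubes acting on $X$ and its tensor powers), since both APAs are determined by these actions and the definitions above are designed so that the two sides agree tangle by tangle on the generators.

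\textbf{Main obstacle.} The hard part is coherence: producing a genuine \emph{planar pivotal} 3-category $\cC$, rather than a weak tricategory satisfying axioms only up to unspecified higher coherence, and verifying that the extracted tuple $(A,B,M,M^*,F,F^*,\tau^\ell,\tau^r)$ satisfies the axioms hidden behind the phrase ``satisfying certain axioms'' in Theorem~\ref{thm:3dapa}. This almost certainly requires either a strictification step into a Gray-style 3-category model or a direct, hands-on verification of each coherence inside the Morita construction, and this is where the heaviest bookkeeping of the proof lives. A secondary but purely technical issue is bridging the operadic graphical calculus of $\cP$ with the 3-categorical graphical calculus inherited by $\cP'$; this step should follow from the comparison of graphical calculi already established in Theorem~\ref{thm:3dapa}, but it must be executed carefully to match orientations, anchor data, and the pivotal conventions on $X$.
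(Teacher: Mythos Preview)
Your overall strategy matches the paper's: pass from $\cP$ to a pointed pivotal module tensor category $(\cT,X)$ over $\cV$ via \cite{HPT2016}, embed everything into a Morita-type 3-category of $\cV$-enriched fusion categories, extract the data required by Theorem~\ref{thm:3dapa}, and then check that the resulting anchored planar algebra agrees with $\cP$ on generators. This is exactly the architecture the paper uses (with $\cC = \cA\text{-}\FusCat$, $A=\cX$, $B=\cA$, and $\End(\id_A)\cong\cB=\cV$).

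However, your identification of the data $(M,F,F^*,\tau^\ell,\tau^r)$ is off by one categorical level, and this is a genuine gap. The 1-cell $M$ is \emph{not} ``the bimodule determined by $X$'' or the functor $1\mapsto X$; it is simply a module category $\cM$ with $\End({}_\cX\cM)\cong\cT$, and the self-dual generator $X$ enters only as a 2-cell, i.e.\ an endomorphism of $M$. Likewise, the adjoint pair $(F,F^*)$ has nothing to do with the self-duality of $X$: in the paper $F$ is the bimodule functor $\cX\to\cM^\op\boxtimes_\cA\cM$ sending $1_\cX$ to the symmetric enveloping object $S$, and $F^*$ is the enriched internal hom $[-,-]_\cX$. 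These witness the duality $\cM^\op\simeq\cM^*$ at the level of 1-cells, not the self-duality of the object $X$. The half-traciators $\tau^\ell,\tau^r$ are then explicit natural isomorphisms built from adjunction data in the 3-category (equations~(\ref{eq:tau_l}) and~(\ref{eq:tau_r})), not imported from the pivotal structure of $\cT$ or the operadic rotation of $\cP$. If you set things up your way, $M\boxtimes_B M^*$ does not even live in the right endomorphism category, and the $\tau$'s you describe would not typecheck against the signature in Theorem~\ref{thm:3dapa}.

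You also mislocate the main obstacle. The 3-categorical coherence you worry about is dispatched by citing standard strictification results; the paper spends no effort there. The real work, occupying most of Section~\ref{sec:const}, is (i) verifying that the concrete $\tau^\ell,\tau^r$ satisfy the braid and twist compatibilities (\ref{eq:braid_axiom}) and (\ref{eq:axiom1}) required by Theorem~\ref{thm:3dapa}, and (ii) proving that the multiplication and traciator so obtained coincide with those of \cite{HPT2016} (Propositions~\ref{prop:mult} and~\ref{prop:same_traciator}), which is what forces the reconstructed APA to be isomorphic to $\cP$ rather than merely some APA in $\cV$. Your proposal gestures at checking generators but does not supply the mechanism (the adjunction $\Phi\dashv\Tr$ and the explicit component formulas) that makes that comparison go through.
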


Morally, we should think of this theorem as lending evidence to the idea that all 3D graphical calculi are subexamples of the 3D graphical calculus of a 3-category. In Section \ref{sec:background}, we recall the graphical definition of anchored planar algebras and describe the 3-category in which our construction in Theorem \ref{thm:3dapa} takes place.

Our proof proceeds by taking a pivotal module fusion category for a braided pivotal fusion category and showing that it naturally gives rise to certain maps inside a particular 3-category that define an anchored planar algebra. Moreover, we show that this anchored planar algebra is equivalent to the one given by the construction in \cite{HPT2016}. Since it is already known that every anchored planar algebra arises this way (shown in \cite{HPT2016}), this also shows that every anchored planar algebra can be interpreted internal to the graphical calculus of some 3-category. We discuss this construction and prove Theorem \ref{thm:main} in Section \ref{sec:main}.

Since the graphical calculus of 3-categories can interpret more geometric shapes than the anchored planar operad, this also enriches the algebraic structure of anchored planar algebras. We are able to interpret surfaces of higher genus and comultiplication-like operations, in addition to the generating operations of the anchored planar operad. In Section \ref{sec:apps}, we use this structure to answer a question from \cite[1.1.3]{HPTunitary}.


\section{Background}\label{sec:background}







\subsection{3D graphical calculus of $\cA-\FusCat$}

Our general setting is the 3-category $\cA-\FusCat$. We begin with a brief discussion of the structure of this 3-category and the various graphical calculi we will use within $\cA-\FusCat$. Our construction of fusion and braided fusion categories, especially in the enriched setting, is based on \cite{egno}, \cite{enriched}, \cite{BJS}, and \cite{center}.

\begin{defn}[\cite{BJS},\cite{enriched}]
Let $\cA$ be a braided fusion category. The 3-category $\cA-\FusCat$ has objects $\cA$-enriched fusion categories (that is, fusion categories $\cX$ with a braided monoidal functor $\cA \to \cZ(\cX)$); a 1-cell ${}_\cX\cM_\cY$ is a (finitely semisimple) $\cX-\cY$ bimodule category $\cM$ equipped with an $\cA$-centered structure; 2-cells are given by bimodule functors; and 3-cells are bimodule natural transformations. Composition of 1-cells is given by relative Deligne product. By a $\cA$-centered structure on ${}_\cX\cM_\cY$, we mean a natural isomorphism between the left $\cA$-action given by $\cA \to \cZ(\cX)$ and the right action given by $\cA \to \cZ(\cY)$. This isomorphism must satisfy appropriate hexagon identities (with the action associators) and must intertwine the tensorators of the $\cX$ and $\cY$ actions.
\end{defn}

\begin{lem}[\cite{BJS},\cite{center}]\label{lem:setup}
Given a braided fusion category $\cB$ and a $\cB$-enriched fusion category $\cY$, there exists a braided fusion category $\cA$, object $\cX \in \cA-\FusCat$, and 1-cell ${}_\cA\cM_\cX$ with monoidal (resp. braided) equivalences $\End^\cA(\cM_\cX) = \End_{\cA-\FusCat}(\cM_\cX) \cong \cY$ and  $\cZ^\cA(\cX) = \End_{\cA-\FusCat}(\cX) \cong \cB$.
\end{lem}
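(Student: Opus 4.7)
The plan is to realize $\cY$ as the enriched endomorphism category $\End^\cA(\cM_\cX)$ and $\cB$ as the relative center $\cZ^\cA(\cX)$, invoking the constructions of \cite{BJS} and \cite{center}. By definition, the $\cB$-enrichment on $\cY$ is a braided functor $\phi:\cB \to \cZ(\cY)$, so the task reduces to factoring this data through a triple $(\cA,\cX,\cM)$ satisfying the stated equivalences.

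First I would construct $\cA$ and $\cX$ using Morita theory together with M\"uger's centralizer calculus. The natural choice is to take $\cA$ to be the M\"uger centralizer of $\phi(\cB)$ inside $\cZ(\cY)$ (or, more generally, to pass to a fusion category $\cX$ Morita-equivalent to $\cY$ whose center factorizes as $\cZ(\cX)\simeq \cA\boxtimes\phi(\cB)$ in the nondegenerate case), and then to take $\cM$ to be $\cY$ itself, equipped with its regular right $\cY$-action transported to a right $\cX$-action via the Morita equivalence and its left $\cA$-action coming from $\cA\hookrightarrow\cZ(\cY)$. The $\cA$-centered structure on $\cM$ is then tautological: since the left $\cA$-action factors through $\cZ(\cY)$, it commutes coherently with the right $\cY$-action, and this transports to the $\cX$-action in a way compatible with the hexagon axioms.

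Next I would verify the two equivalences in turn. For $\End^\cA(\cM_\cX)\cong \cY$, I would use the enriched Eilenberg--Watts/Ostrik theorem: endomorphisms of a bimodule category recover a Morita-equivalent fusion category, and the extra $\cA$-centered condition refines this to an $\cA$-enriched equivalence picking out $\cY$. For $\cZ^\cA(\cX)\cong \cB$, I would appeal to the double centralizer theorem for nondegenerate braided fusion categories: $\cZ^\cA(\cX)$ identifies naturally with the M\"uger centralizer of $\cA$ inside $\cZ(\cX)$, and by construction this centralizer is $\phi(\cB)\cong\cB$.

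The main obstacle I expect is the bookkeeping around the $\cA$-centered structure on $\cM$, together with the verification that all of $\cB$ --- not merely a double-centralizer closure --- is really captured by $\cZ^\cA(\cX)$. This is where the hypothesis that $\cB$ be a braided \emph{fusion} category becomes essential: M\"uger's theorem guarantees that a braided subcategory of the center of a fusion category equals its own double centralizer, so no information is lost on passing to $\cA$ and back. Beyond this core identification, the bulk of the argument is a careful lift of the nonenriched data-correspondence of \cite{BJS,center} to the 3-categorical setting of $\cA-\FusCat$, keeping track of bimodule associators, the $\cA$-centering isomorphism, and the relevant hexagon coherences.
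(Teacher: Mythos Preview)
Your approach via M\"uger centralizers and Morita theory is genuinely different from the paper's, and as stated it has a gap. The paper makes the simplest possible choice: take $\cA = \cB$ and $\cX = \cB$, viewing $\cB$ as an object of $\cB\text{-}\FusCat$ via the identity $\cB \to \cZ^\cB(\cB)$. The only real computation is then that $\cZ^\cB(\cB)\cong\cB$; this is done directly by showing that any enriched half-braiding $\theta$ on $b\in\cB$ is forced, by enriched naturality evaluated at $x=1_\cB$, to agree with the given braiding $\beta_{b,-}$. For $\cM$ one then takes (essentially) $\cY$ itself as a right $\cB$-module via the enrichment $\cB\to\cZ(\cY)$, and $\End^\cB(\cM_\cB)\cong\cY$ follows. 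No centralizer calculus or Morita theory is needed.

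Your route, by contrast, attempts to recover $\cB$ as a double centralizer inside $\cZ(\cY)$. The problem is that the double centralizer of $\phi(\cB)$ in the nondegenerate category $\cZ(\cY)$ is the replete image $\phi(\cB)$, not $\cB$; you implicitly assume $\phi(\cB)\cong\cB$, i.e.\ that the braided functor $\phi:\cB\to\cZ(\cY)$ is fully faithful. The lemma does not assume this: a $\cB$-enriched fusion category is only required to carry a braided monoidal functor $\cB\to\cZ(\cY)$, and in general this kills information that your $\cZ^\cA(\cX)$ cannot recover. (Your factorization $\cZ(\cX)\simeq \cA\boxtimes\phi(\cB)$ likewise needs $\phi(\cB)$ nondegenerate, which is not given.) The paper's choice $\cA=\cB$ sidesteps this entirely, since $\cB$ is remembered directly as the enriching category rather than reconstructed from its image in a center.
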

\begin{proof}
This is well-known; we briefly recall the construction given in \cite{center}. Choose $\cA = \cB$ and $\cX = \cB$. Certainly we have a braided monoidal embedding $\cB \hookrightarrow \cZ^\cB(\cB)$; we just need to show this is essentially surjective. Given an object $b$ and enriched half-braiding $\theta$ for $b$, notice that enriched naturality means the diagram
\[
\begin{tikzcd}
{\left[x,y\right]_\cB} \arrow[r, "-\otimes\id"] \arrow[d, "\id\otimes-"] & {\left[x\otimes b,y\otimes b\right]_\cB} \arrow[d, "\theta_x"] \\
{\left[b\otimes x,b\otimes y\right]_\cB} \arrow[r, "\theta_y"] & {\left[b\otimes x,y\otimes b\right]}
\end{tikzcd}
\]
commutes for all $x,y \in \cB$. Recall that in a fusion category, internal hom is given by $[x,y]_\cB \cong x\otimes y^*$. Then setting $x = 1_\cB$, we have that the diagram
\[
\begin{tikzcd}
y \arrow[r, "\eta_b"] \arrow[d, "\beta_{by}\circ\eta_b"] & y\otimes b \otimes b^* \arrow[d, "\theta_1 = \id"] \\
b\otimes y \otimes b^* \arrow[r, "\theta_y"] & y\otimes b \otimes b^*
\end{tikzcd}
\]
commutes for all $y$, so $\theta_y = \beta_{b,y}$ (where $\beta$ is the braiding in $\cB$). Thus an enriched half-braiding must be the same as the original braiding, so the map $\cB \to \cZ^\cB(\cB)$ is essentially surjective and an equivalence. For $\cM$, choose an indecomposable $\cB$-module; it follows that $\End(\cM_\cB) \cong \cB$.
\end{proof}

In the arguments that follow, we frequently switch between two different graphical calculi for Gray categories, using the coherence result in \cite{gray_coherence} that every 3-category is triequivalent to a Gray category. The first graphical calculus is the 3D graphical calculus found in \cite{gray_graphical}. In our graphical calculations, the Deligne tensor product should be read back to front, functor composition read left to right, and natural transformation composition bottom to top. The second graphical calculus is written using string diagrams plus arrows representing transformations of string diagrams. In this graphical calculus, Deligne tensor product is read left to right, functor composition bottom to top, and natural transformation composition is in the time direction. Both of these graphical calculi are coherent for general 3-categories by \cite{gray_coherence}, \cite{gray_graphical}, and \cite{guthmann}. We illustrate the two calculi in Figure \ref{fig:ex1}.

\begin{figure}
\[
\tikzmath{
\draw[thick] (0,0) -- ++(0,2) -- ++(2,0) -- ++(0,-2) -- ++(-2,0);
\draw[thick] (1,0) -- ++(0,2);
\draw[thick] (0,.5) -- ++(-.5,0) -- ++(0,2) -- ++(2,0) -- ++(0,-.5);
\filldraw (1,1) circle (.05);
\node at (0.25,.25) {$\scriptstyle \mathcal{C}$};
\node at (1.75,.25) {$\scriptstyle \mathcal{D}$};
\node at (.75,.25) {$\scriptstyle F$};
\node at (.75,1.75) {$\scriptstyle G$};
\node at (.75,1.0) {$\scriptstyle \alpha$};
\node at (-.25,2.25) {$\scriptstyle \mathcal{E}$};
}
\qquad
\qquad
\begin{tikzcd}
\tikzmath{
\draw[thick] (0,0) -- ++(0,1);
\draw[thick] (.5,0) -- ++(0,.5) coordinate (a) -- ++(0,.5);
\filldraw (a) circle (.05);
\node at (0,-.3) {$\scriptstyle \mathcal{E}$};
\node at (.5,-.3) {$\scriptstyle \mathcal{C}$};
\node at (.3,.4) {$\scriptstyle F$};
\node at (.5,1.1) {$\scriptstyle \mathcal{D}$};
}
 \arrow[Rightarrow,r,"\alpha"] &
\tikzmath{
\draw[thick] (0,0) -- ++(0,1);
\draw[thick] (.5,0) -- ++(0,.5) coordinate (a) -- ++(0,.5);
\filldraw (a) circle (.05);
\node at (0,-.3) {$\scriptstyle \mathcal{E}$};
\node at (.5,-.3) {$\scriptstyle \mathcal{C}$};
\node at (.3,.4) {$\scriptstyle G$};
\node at (.5,1.1) {$\scriptstyle \mathcal{D}$};
}
\end{tikzcd}
\]
\caption{A demonstration of the 3D graphical calculus and 2D string diagram graphical calculus. Both images represent $\id_{\id_\cE}\boxtimes \alpha$ as a morphism from $\id_{\cE}\boxtimes F \to \id_{\cE}\boxtimes G$, where $\alpha:F \Rightarrow G$ and $F,G:\cC \to \cD$. }\label{fig:ex1}
\end{figure}
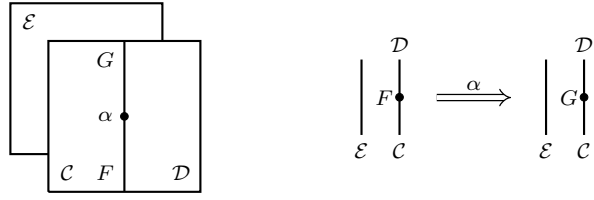


In general, we will use brackets to denote homs, with a subscript denoting the enriching category. When the brackets are undecorated, they mean the underlying hom set as a vector space. Sometimes we will refer to ``elements'' of a hom space $[x,y]$; although this is an object of $\cA$ and not a set, this phrasing is meant to indicate a Yoneda-style generalized elements argument. We spell out the details of any generalized elements argument for enriching categories other than $\cA$, and choose to elide these arguments for $\cA$ for brevity.


\section{Anchored planar algebras in $\cA-\FusCat$ MB}\label{sec:main}

Let $\cB$ be an arbitrary braided pivotal tensor category and $\cY$ a pivotal $\cB$-module fusion category. By Lemma \ref{lem:setup}, there exists a braided fusion category $\cA$, an $\cA$-enriched fusion category $\cX$, and an $\cX$-module category $\cM$ such that $\cB \cong \cZ^\cA(\cX)$ and $\cY \cong \End^\cA({}_\cX\cM)$. Moreover, the action of $\cB$ on $\cY$ is equivalent to the action of $\cZ^\cA(\cX) \cong \End^\cA({}_\cX\cX_\cX)$ on $\End({}_\cX\cM)$. Since $\cY \cong \End({}_\cX\cM)$ is a fusion category, this action can be denoted by a monoidal functor $\Phi:\cB \to \cY$ which factors through the Drinfeld center of $\cY$.

With this setup, the graphical calculus of strings on tubes for $\cB$ and $\cY$ as discussed in \cite{HPT2015} can be thought of as occurring in the graphical calculus for 3-categories in $\cA-\FusCat$. To clarify our notation, we will use $\boxtimes$ for the (relative) Deligne product of module categories, $\otimes$ for functor composition, and juxtaposition or $\circ$ for natural transformation composition (written in composition order). Some diagrams use the 3-dimensional graphical calculus in $\cA-\FusCat$, while others project onto a 2-dimensional graphical calculus of string diagrams with morphisms between them.


Let $\cM$ be denoted by a sheet. In this context, objects of $\cY \cong \End({}_\cX\cM)$ can be represented by strings on the sheet. The action of $\cY$ on $\cM$ is given by the endofunctor, while the action of $\cY$ on $\cM^\op$ is given by its adjoint.


\subsection{Construction of an anchored planar algebra in $\cA-\FusCat$}\label{sec:const}


As all the categories involved are finite semisimple, the action functor $-\rhd  m:\cX \to \cM$ has a right adjoint, denoted $[m,-]_\cX:\cM \to \cX$. This assembles into an enriched internal hom $[-,-]_\cX:\cM^\op\boxtimes\cM \to \cX$. On objects, this is given by
\[
[m,n]_\cX := \bigoplus_{x\in\Irr(\cX)} [x\rhd m,n]\otimes x.
\]
Moreover, this has a natural structure as an $\cA$-balanced functor; notice that
\begin{align*}
[m^\op \lhd a,n]_\cX &= [(a^*\rhd m)^\op,n]_\cX \cong \bigoplus_{x\in\Irr(\cX)} [x\rhd (a^*\rhd m),n]\otimes x
\\ &\cong \bigoplus_{x\in\Irr(\cX)} [a^*\rhd(x\rhd m),n]\otimes x \cong \bigoplus_{x\in\Irr(\cX)} [x\rhd m,a\rhd n]\otimes x \cong [m,a\rhd n]_\cX
\end{align*}
Therefore, by the universal property, this induces a functor
\[
\begin{tikzcd}
\cM^\op \boxtimes \cM \arrow[d,hook] \arrow[dr,"{[-,-]_\cX}"] \\
\cM^\op \boxtimes_\cA \cM \arrow[r,dashed] & \cX.
\end{tikzcd}
\]
The induced functor from $\cM^\op \boxtimes_\cA \cM$ to $\cX$ has a left adjoint by semisimplicity. We now describe this left adjoint explicitly. Using the ladder category model for relative Deligne tensor product in \cite{ladder}, we can think of $\cM^\op\boxtimes_\cA\cM$ as the Cauchy completion of the category with objects given by $s^\op\boxtimes t$ for $s,t$ simple in $\cM$, where homs are
\[
\Hom_{\cM^\op \boxtimes_\cA \cM}(s^\op\boxtimes t,u^\op\boxtimes v) = \bigoplus_{a\text{ simple} \in \cA} \Hom_{\cM^\op}(s^\op,u^\op\lhd a) \otimes \Hom_{\cM}(a\rhd t,v);
\]
composition is given by stacking ladders.

Consider the object $\bigoplus_{s\in\Irr(\cM)} s^\op\boxtimes s$ in $\cM^\op\boxtimes \cM$. Choose a basis $\beta^i_{a,s,t}$ for $[a\rhd s,t]$ and corresponding dual basis $(\beta^i_{a,s,t})^*$ for $[t,a\rhd s]$. Then the map
\[\bigoplus_{a,s,t} \sum_i
\tikzmath{
\draw[thick] (0,0) coordinate (m) -- ++(0,.4) coordinate (1) -- ++(.3,.3) coordinate (s);
\draw[thick] (1) -- ++(-.2,.2) arc (45:180:.2) -- ++(0,-.45) coordinate (c);
\node[yshift=-.15cm] at (c) {$\scriptstyle a^*$};
\node[yshift=-.15cm] at (m) {$\scriptstyle t$};
\node[yshift=.1cm,xshift=.1cm] at (s) {$\scriptstyle s$};
\filldraw[fill=red,draw=red,thick] (1) circle (.05);
}
\otimes
\tikzmath{
\draw[thick] (0,0) coordinate (c) -- ++(.2,.4) coordinate (1) -- ++(.2,-.4) coordinate (m);
\draw[thick] (1) -- ++(0,.3) coordinate (s);
\node[yshift=.1cm] at (s) {$\scriptstyle t$};
\node[xshift=-.1cm,yshift=-.1cm] at (c) {$\scriptstyle a$};
\node[xshift=.1cm,yshift=-.1cm] at (m) {$\scriptstyle s$};
\filldraw[fill=red,draw=red,thick] (1) circle (.05);
}
\]
lying in $\bigoplus_{s,t} \bigoplus_a [a^*\rhd t,s]\otimes[a\rhd s,t] \cong \bigoplus_{s,t} \bigoplus_a [s^\op,t^\op\lhd a]\otimes [a\rhd s,t]$ (where the red vertices represent corresponding elements of the dual bases $\beta_{a,s,t}^i$ and $(\beta_{a,s,t}^i)^*$), defines an endomorphism $e$ of $\bigoplus s^\op\boxtimes s$. A routine calculation verifies that $e$ is an idempotent. Denote by $S$ the splitting of the idempotent $e$ in the relative Deligne category $\cM^\op\boxtimes_\cA \cM$.

\begin{rem}
A unitary version of the object $S$ appears in \cite[Remark 4.18]{symmalg}, where it is called the \emph{enriched symmetric enveloping algebra object}. 
\end{rem}

\begin{prop}\label{prop:biadjoint}
The unique $(\cX-\cX)$-bimodule functor $F:\cX \to \cM^\op\boxtimes_\cA\cM$ given by $1_\cX \mapsto S$ is left adjoint to the enriched inner hom functor $\cM^\op\boxtimes_\cA\cM \to \cX$. In fact, these functors are biadjoint.
\end{prop}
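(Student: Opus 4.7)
My plan is to reduce both adjunctions to a single underlying natural isomorphism for $S$, and then exploit pivotality to upgrade the left adjunction to biadjointness. Since $F$ is an $\cX$-bimodule functor determined by $F(1_\cX) = S$, and since
\[
\Hom_\cX\bigl(x, [m,n]_\cX\bigr) \;=\; \Hom_\cX\Bigl(x, \bigoplus_{y\in\Irr(\cX)} [y \rhd m, n] \otimes y\Bigr) \;\cong\; [x\rhd m, n]
\]
(the latter by the defining adjunction of enriched internal hom), the adjunction $\Hom(F(x), m^\op \boxtimes n) \cong \Hom(x, [m,n]_\cX)$ reduces, via $\cX$-equivariance on both sides, to the case $x = 1_\cX$. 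So the first goal is to produce a natural isomorphism
\[
\Phi_{m,n}\colon \Hom_{\cM^\op \boxtimes_\cA \cM}(S, m^\op \boxtimes n) \;\xrightarrow{\;\sim\;}\; \Hom_\cM(m, n).
\]

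To build $\Phi$ I would use the ladder presentation. Since $S$ is the splitting of the idempotent $e$ on $\bigoplus_{s \in \Irr(\cM)} s^\op \boxtimes s$,
\[
\Hom\Bigl(\bigoplus_s s^\op\boxtimes s,\; m^\op\boxtimes n\Bigr) \;\cong\; \bigoplus_{s\in\Irr(\cM),\, a\in \Irr(\cA)} [a^*\rhd m, s]\otimes [a\rhd s, n],
\]
and $\Hom(S, m^\op \boxtimes n)$ is cut out by postcomposing with $e$. Inserting the explicit formula for $e$ and applying the completeness relation $\sum_i \beta^i_{a,s,t} (\beta^i_{a,s,t})^* = \id_{[a\rhd s,t]}$ collapses the $s$-sum against evaluation/coevaluation in $\cA$, leaving precisely $\Hom_\cM(m,n)$. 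Equivalently, $\Phi^{-1}$ sends $h\colon m\to n$ to its ``bent'' avatar against the dual pairings. With $\Phi$ in hand, naturality is immediate from the graphical construction; the counit is then $\Phi^{-1}(\id)$ and the triangle identities fall out of Yoneda, giving the left adjunction $F \dashv [-,-]_\cX$.

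For biadjointness, my plan is to argue that the idempotent $e$ is invariant under the involution of $\cM^\op\boxtimes_\cA\cM$ swapping the two factors, using the pivotal structure on $\cY$. Pivotality provides an identification between the basis/dual-basis pair $(\beta^i_{a,s,t}, (\beta^i_{a,s,t})^*)$ for $[a\rhd s,t]$ and the corresponding pair for $[a^*\rhd t,s]$, and under this identification the sum defining $e$ is unchanged; hence $S$ is self-dual under the swap. Applying the previous argument with the two factors swapped yields $\Hom(m^\op\boxtimes n, F(x)) \cong \Hom([m,n]_\cX, x)$, hence biadjointness. I expect the main technical obstacle to be precisely this last invariance check: tracking the pivotal structure on $\cM$ induced from $\cY$, and verifying that it is compatible with the dualities in $\cA$ used to build $e$, requires careful bookkeeping. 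Once this is in hand, the remaining computations are essentially direct graphical manipulations.
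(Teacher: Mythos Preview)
Your approach to the left adjunction is essentially the same as the paper's: reduce to $x=1_\cX$ by $\cX$-equivariance and compute $\Hom(S,m^\op\boxtimes n)$ by describing maps out of $\bigoplus_s s^\op\boxtimes s$ absorbing $e$. The paper carries out the ``collapse'' you describe by an explicit I=H relation in the multifusion category $\bigl[\begin{smallmatrix}\cX & \cM^\op\\ \cM & \cA\end{smallmatrix}\bigr]$, which rewrites $f\circ e$ so that for simple $m,n$ the surviving space is visibly one-dimensional when $m\cong n$ and zero otherwise; your phrase ``completeness relation collapses the $s$-sum'' is pointing at the same computation but hides this step.

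Where you genuinely diverge is biadjointness. You propose to show that $e$ (hence $S$) is invariant under the factor-swap involution, using the pivotal structure, and then rerun the argument with factors exchanged. The paper instead dispatches this in one line: a linear functor between finite semisimple categories that has a left adjoint automatically has a right adjoint, so $F\dashv F^*$ implies $F^*\dashv F$. Your route would work, but it is considerably more labor (exactly the bookkeeping you flag as the main obstacle) and it invokes pivotality, which the paper's argument does not need at this point. The payoff of your approach would be an explicit self-duality of $S$, which could be independently useful; the payoff of the paper's approach is that biadjointness is immediate from semisimplicity with no further computation.
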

\begin{proof}
We begin by describing morphisms $f:\bigoplus s^\op\boxtimes s \to m^\op\boxtimes n$ satisfying $f\circ e = f$ (for arbitrary $m,n \in \cM$), since these correspond to maps out of $S$. 
Graphically, we denote
\[
f = \bigoplus_{s,a} \sum_i
\tikzmath{
\draw[thick] (0,0) coordinate (a) -- ++(.2,.4) coordinate (1) -- +(0,.4) coordinate (s);
\draw[thick] (1) -- +(.2,-.4) coordinate (m);
\node[xshift=-.15cm,yshift=-.1cm] at (a) {$\scriptstyle a^*$};
\node[xshift=0cm,yshift=.1cm] at (s) {$\scriptstyle s$};
\node[xshift=.1cm,yshift=-.1cm] at (m) {$\scriptstyle m$};
\roundNbox{fill=white}{(1)}{.2}{0}{0}{$\scriptstyle f_i$}
}
\otimes
\tikzmath{
\draw[thick] (0,0) coordinate (a) -- ++(.2,.4) coordinate (1) -- +(0,.4) coordinate (s);
\draw[thick] (1) -- +(.2,-.4) coordinate (m);
\roundNbox{fill=white}{(1)}{.2}{0}{0}{$\scriptstyle g_i$}
\node[xshift=-.1cm,yshift=-.1cm] at (a) {$\scriptstyle a$};
\node[xshift=0cm,yshift=.1cm] at (s) {$\scriptstyle n$};
\node[xshift=.1cm,yshift=-.1cm] at (m) {$\scriptstyle s$};
}
\in \bigoplus_{s,a} [a^*\rhd m,s]\otimes [a\rhd s,n] \cong \bigoplus_{s,a} [s^\op,m^\op\lhd a]\otimes [a\rhd s,n]
.
\]
Then the composite of $f$ with $e$ becomes 
\[
f\circ e = 
\bigoplus_{s,c}
\sum_{a,b,\gamma^j_{a,b,c}} \sum_{t,\beta^k_{b,s,t}} \sum_i
\tikzmath{
\draw[thick] (0,0) coordinate (c) -- ++(.2,.4) coordinate (1) -- ++(.3,.3) coordinate (2);
\draw[thick] (.7,0) coordinate (t) -- (2);
\draw[thick] (2) -- ++(0,.6) coordinate (j) -- ++(0,.3) coordinate (n);
\draw[thick] (1) -- ++(0,1) arc (180:90:.1) to[out=0,in=135] (j);
\roundNbox{fill=white}{(2)}{.2}{0}{0}{$\scriptstyle f_i$}
\node[xshift=-.15cm,yshift=-.1cm] at (c) {$\scriptstyle c^*$};
\node[xshift=.1cm,yshift=-.1cm] at (t) {$\scriptstyle m$};
\node[xshift=-.15cm,yshift=.35cm] at (1) {$\scriptstyle b^*$};
\node[xshift=.2cm,yshift=-.05cm] at (1) {$\scriptstyle a^*$};
\filldraw[fill=blue,draw=blue,thick] (1) circle (.05);
\filldraw[fill=red,draw=red,thick] (j) circle (.05);
\node[xshift=.1cm,yshift=-.15cm] at (j) {$\scriptstyle t$};
\node[xshift=0cm,yshift=.1cm] at (n) {$\scriptstyle s$};
}
\otimes
\tikzmath{
\draw[thick] (0,0) coordinate (c) -- ++(.2,.4) coordinate (1) -- ++(.3,.3) coordinate (2);
\draw[thick] (.7,0) coordinate (t) -- (2);
\draw[thick] (2) -- ++(0,.6) coordinate (j) -- ++(0,.4) coordinate (n);
\draw[thick] (1) -- ++(0,.2) to[out=90,in=225] (j);
\roundNbox{fill=white}{(j)}{.2}{0}{0}{$\scriptstyle g_i$}
\node[xshift=-.1cm,yshift=-.1cm] at (c) {$\scriptstyle c$};
\node[xshift=.1cm,yshift=-.1cm] at (t) {$\scriptstyle s$};
\node[xshift=-.1cm,yshift=.35cm] at (1) {$\scriptstyle a$};
\node[xshift=.2cm,yshift=0cm] at (1) {$\scriptstyle b$};
\filldraw[fill=blue,draw=blue,thick] (1) circle (.05);
\node[xshift=.1cm,yshift=.15cm] at (2) {$\scriptstyle t$};
\filldraw[fill=red,draw=red,thick] (2) circle (.05);
\node[xshift=0cm,yshift=.1cm] at (n) {$\scriptstyle n$};
},
\]
where the red and blue trivalent vertices correspond to $\beta^k_{b,t,s}$ and to corresponding choices of bases $\gamma_{a,b,c}^j:a\otimes b \to c$ and $(\gamma_{a,b,c}^j)^*:c \to a\otimes b$, respectively. The variables $a,b,c$ range over $\Irr(\cA)$, and $s,t$ range over $\Irr(\cM)$.

We can now use an I=H relation in the multifusion category $\begin{bmatrix} \cX & \cM^\op \\ \cM & \cA  \end{bmatrix}$ (as in \cite[Lemma 2.16]{bicommutant}) on the dual bases $\beta^k_{b,t,s},\gamma^j_{a,b,c}$ to obtain
\[
f\circ e = \bigoplus_{s,c} \sum_{r,\delta_{c,s,r}^j} \sum_{a,t,\epsilon_{a,t,r}^k} \sum_i
\tikzmath{
\draw[thick] (0,0) coordinate (m) -- ++(-.2,.4) coordinate (1) -- ++(-.2,-.2) arc (-45:-225:.2) -- ++(.35,.35) coordinate (2);
\draw[thick] (1) -- (2) -- ++(0,.4) coordinate (3) -- ++(.2,.3) coordinate (s);
\draw[thick] (3) -- ++(-.2,.2) arc (45:180:.2) -- ++(0,-1.25) coordinate (c);
\roundNbox{fill=white}{(1)}{.2}{0}{0}{$\scriptstyle f_i$}
\node[xshift=.1cm,yshift=-.15cm] at (m) {$\scriptstyle m$};
\node[xshift=0cm,yshift=-.15cm] at (c) {$\scriptstyle c^*$};
\node[xshift=-.05cm,yshift=-.35cm] at (1) {$\scriptstyle a^*$};
\node[xshift=.05cm,yshift=.35cm] at (1) {$\scriptstyle t$};
\node[xshift=.15cm,yshift=.2cm] at (2) {$\scriptstyle r$};
\node[xshift=.1cm,yshift=.1cm] at (s) {$\scriptstyle s$};
\filldraw[fill=DarkGreen,draw=DarkGreen,thick] (2) circle (.05);
\filldraw[fill=RealPurple,draw=RealPurple,thick] (3) circle (.05);
}
\otimes
\tikzmath{
\draw[thick] (0,0) coordinate (c) -- ++(.2,.4) coordinate (1) -- ++(0,.4) coordinate (2) arc (225:135:.4) coordinate (3) -- ++(0,.4) coordinate (n);
\draw[thick] (1) -- ++(.2,-.4) coordinate (s);
\draw[thick] (2) arc (-45:45:.4);
\roundNbox{fill=white}{(3)}{.2}{0}{0}{$\scriptstyle g_i$}
\node[xshift=-.1cm,yshift=-.1cm] at (c) {$\scriptstyle c$};
\node[xshift=.1cm,yshift=-.1cm] at (s) {$\scriptstyle s$};
\node[xshift=.15cm,yshift=.2cm] at (1) {$\scriptstyle r$};
\node[xshift=-.25cm,yshift=.25cm] at (2) {$\scriptstyle a$};
\node[xshift=.25cm,yshift=.25cm] at (2) {$\scriptstyle t$};
\node[xshift=0cm,yshift=.1cm] at (n) {$\scriptstyle n$};
\filldraw[fill=DarkGreen,draw=DarkGreen,thick] (2) circle (.05);
\filldraw[fill=RealPurple,draw=RealPurple,thick] (1) circle (.05);
},
\]
where $r$ ranges over the irreducibles in $\cM$ and $\delta_{c,s,r}^j,\epsilon_{a,t,r}^k$ are bases for the spaces $[c\rhd s,r]$ and $[a\rhd t,r]$, respectively. The green and purple trivalent vertices and represent the $\delta_{c,s,r}^j$ and $\epsilon_{a,t,r}^k$, respectively, and corresponding dual bases. Notice that in the case $m,n$ are simple, this reduces to a constant times
\[
\delta_{m\cong n} \bigoplus_{s,c} \sum_{\delta_{c,s,m}^i}
\tikzmath{
\draw[thick] (0,0) coordinate (m) -- ++(0,.4) coordinate (1) -- ++(.3,.3) coordinate (s);
\draw[thick] (1) -- ++(-.2,.2) arc (45:180:.2) -- ++(0,-.45) coordinate (c);
\node[yshift=-.15cm] at (c) {$\scriptstyle c^*$};
\node[yshift=-.15cm] at (m) {$\scriptstyle m$};
\node[yshift=.1cm,xshift=.1cm] at (s) {$\scriptstyle s$};
\filldraw[fill=DarkGreen,draw=DarkGreen,thick] (1) circle (.05);
}
\otimes
\tikzmath{
\draw[thick] (0,0) coordinate (c) -- ++(.2,.4) coordinate (1) -- ++(.2,-.4) coordinate (m);
\draw[thick] (1) -- ++(0,.3) coordinate (s);
\node[yshift=.1cm] at (s) {$\scriptstyle s$};
\node[xshift=-.1cm,yshift=-.1cm] at (c) {$\scriptstyle c$};
\node[xshift=.1cm,yshift=-.1cm] at (m) {$\scriptstyle m$};
\filldraw[fill=DarkGreen,draw=DarkGreen,thick] (1) circle (.05);
}.
\]

We now use this to describe a map $[1_\cX,[m,n]_\cX] \cong [m,n] \to [S,m^\op\boxtimes n]$. It is enough to describe the map in this case by the universal property of Cauchy completion, since $\cM^\op \boxtimes_\cA \cM$ is the Cauchy completion on the subcategory consisting of objects of the form $m^\op\boxtimes n$. In fact, it is enough to just consider the case where $m,n$ are simple. As discussed above, a map $f \in [\bigoplus s^\op\boxtimes s, m^\op\boxtimes n]$ satisfies $f = f\circ e$, which is true if and only if $f$ is in the form
\[
\lambda\tikzmath{
\draw[thick] (0,0) coordinate (m) -- ++(0,.4) coordinate (1) -- ++(.3,.3) coordinate (s);
\draw[thick] (1) -- ++(-.2,.2) arc (45:180:.2) -- ++(0,-.45) coordinate (c);
\node[yshift=-.15cm] at (c) {$\scriptstyle c^*$};
\node[yshift=-.15cm] at (m) {$\scriptstyle m$};
\node[yshift=.1cm,xshift=.1cm] at (s) {$\scriptstyle s$};
\filldraw[fill=DarkGreen,draw=DarkGreen,thick] (1) circle (.05);
}
\otimes
\tikzmath{
\draw[thick] (0,0) coordinate (c) -- ++(.2,.4) coordinate (1) -- ++(.2,-.4) coordinate (m);
\draw[thick] (1) -- ++(0,.3) coordinate (s);
\node[yshift=.1cm] at (s) {$\scriptstyle s$};
\node[xshift=-.1cm,yshift=-.1cm] at (c) {$\scriptstyle c$};
\node[xshift=.1cm,yshift=-.1cm] at (m) {$\scriptstyle m$};
\filldraw[fill=DarkGreen,draw=DarkGreen,thick] (1) circle (.05);
}
\]
for some scalar $\lambda$. If $m = n$, then we define the map $[m,m] \to [S,m^\op\boxtimes m]$ by sending $\lambda\id_m$ to the map described above. This extends naturally to the rest of the objects in $\cM^\op\boxtimes_{\cA}\cM$. Moreover, it is clear that this map is an isomorphism, since the space of maps $f \in [\bigoplus s^\op\boxtimes s, m^\op\boxtimes n]$ satisfying $f = f\circ e$ is one dimensional if $m = n$ and zero dimensional otherwise. This verifies the adjunction. As left adjoints of linear functors between finite semisimple categories are also right adjoints, these two functors are indeed biadjoint.
\end{proof}

We denote this functor by a half tube
\[
F = \lefttube{250}{90}{0.3}{9}{2}\,.
\]
Note that the region inside the half tube is labeled with the $\cA$-enriched vacuum $\cA$ and the region outside the half tube is labeled with $\cX$ (the back sheet is $\cM^\op$ and the front sheet is $\cM$).

We denote the adjoint $F^* = [-,-]_{\cX}$ by the half tube
\[
F^* = \righttube{250}{90}{0.3}{9}{2}\,.
\]
The unit and counit of these adjunctions  can be represented with half-spheres and saddles, denoted by
\[ \eta^\ell = \tikzmath{
\draw (0,0) arc (0:360:2*\x and \x);
\draw (0,0) arc (0:-180:2*\x);
}\,,\ \ 
\epsilon^\ell = \tikzmath{
\saddleguts{260}{60}{0.3}{6}{8}{2};
}\,,\ \ 
\eta^r = \tikzmath{
\undersaddleguts{-60}{100}{0.3}{6}{8}{2};
}\,,\ \ 
\epsilon^r = \tikzmath{
\draw (0,0) arc (-180:0:2*\x and \x) arc (0:180:2*\x);
\draw[dashed] (0,0) arc (180:0:2*\x and \x);
}\,,
\]
where $\eta^\ell$ and $\epsilon^\ell$ witness the adjunction $F\dashv F^*$ and $\eta^r$ and $\epsilon^r$ witness $F^*\dashv F$.

It is straightforward to verify that the components of these natural transformations are given by

\begin{equation}\label{eq:eta_ell}
    \eta^\ell_{1_\cX} = 1_\cX \xrightarrow{\bigoplus\id\otimes1_\cX} \bigoplus_s [s,s]\otimes 1_\cX \hookrightarrow \bigoplus_{s,x} [x\rhd s,s] \otimes x \xrightarrow{\sim} \bigoplus_s [s,s]_{\cX} \xrightarrow{e} S,
\end{equation}
\begin{equation}\label{eq:epsilon_ell}
    \epsilon^\ell_{t^\op\boxtimes t'} = [t,t']_\cX\rhd S \hookrightarrow \bigoplus_{x,s} s^\op \boxtimes [x\rhd t,t'] x\rhd s \twoheadrightarrow \bigoplus_x t^\op \boxtimes [x\rhd t,t'] x\rhd t \xrightarrow{\ev} t^\op\boxtimes t' ,
\end{equation}
\begin{equation}\label{eq:eta_r}
    \eta^r_{t^\op\boxtimes t'} = t^\op\boxtimes t' \cong \bigoplus_s [t,s]s^\op\boxtimes [s,t']s 
\xrightarrow{\ev} \bigoplus_s s^\op\boxtimes [t,t']s \hookrightarrow \bigoplus_{x,s}s^\op\boxtimes[t,t']_\cX\rhd s \xrightarrow{[t,t']_\cX\rhd e} [t,t']_\cX \rhd S,
\end{equation}
\begin{equation}\label{eq:epsilon_r}
    \epsilon^r_{1_\cX} = S\hookrightarrow \bigoplus_s [s,s]_\cX \cong \bigoplus_{s,x} [x\rhd s,s]\otimes x \twoheadrightarrow \bigoplus_s [s,s]\otimes 1_\cX \cong \bigoplus_s 1_\cX \xrightarrow{\sum \id} 1_\cX,
\end{equation}
where $t,t'$ are simples in $\cM$ and $x$ is a simple in $\cX$.

Not only are $F$ and $F^*$ biadjoint to each other, but also they witness the duality between $\cM$ and $\cM^\op$.


\begin{prop}\label{prop:duality}
Let $G:\cA \to \cM\boxtimes_\cX \cM^\op$, $G^*: \cM\boxtimes_\cX\cM^\op \to \cA$ be as $F,F^*$ (replacing $\cM$ with $\cM^\op$). Then $F$ and $G^*$ witness the duality $\cM^\op \cong \cM^*$ and $G$ and $F^*$ witness the duality $\cM \cong (\cM^\op)^*$.
\end{prop}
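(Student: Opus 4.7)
The plan is to verify the two snake identities for the proposed duality data:
\[
(G^* \boxtimes \id_\cM) \circ (\id_\cM \boxtimes F) \cong \id_\cM \quad\text{and}\quad (\id_{\cM^\op} \boxtimes G^*) \circ (F \boxtimes \id_{\cM^\op}) \cong \id_{\cM^\op},
\]
where the associator $\cM \boxtimes_\cX (\cM^\op \boxtimes_\cA \cM) \cong (\cM \boxtimes_\cX \cM^\op) \boxtimes_\cA \cM$ (and its mirror) is inserted between the two arrows. Since $G$ and $G^*$ are constructed symmetrically from $(\cM^\op, \cX)$ in the same way that $F$ and $F^*$ are constructed from $(\cM, \cA)$, the duality $\cM \cong (\cM^\op)^*$ will follow from the $\cM^\op \cong \cM^*$ case by swapping roles, so I would focus on the first pair of snakes.

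First I would make $G$ and $G^*$ explicit by applying the construction of Proposition \ref{prop:biadjoint} with the roles of $(\cM, \cA, \cX)$ and $(\cM^\op, \cX, \cA)$ exchanged. In particular, $G(1_\cA) \in \cM \boxtimes_\cX \cM^\op$ is the splitting of an idempotent $e'$ assembled from dual bases for $[x \rhd s, t]$ with $x \in \Irr(\cX)$ and $s, t \in \Irr(\cM)$, and $G^*(n \boxtimes m^\op) \cong [n, m]_\cA$ is the $\cA$-enriched internal Hom of $\cM$ viewed as an $\cA$-module category.

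Next I would trace a simple $m \in \cM$ through the first snake. The map $\id_\cM \boxtimes F$ sends $m$ to $m \boxtimes S$, which is the splitting of $\id_m \boxtimes e$ on $\bigoplus_{s \in \Irr(\cM)} m \boxtimes s^\op \boxtimes s$. After re-associating into $(\cM \boxtimes_\cX \cM^\op) \boxtimes_\cA \cM$ and applying $G^* \boxtimes \id_\cM$, the composite becomes the splitting of a transported idempotent on $\bigoplus_s [m, s]_\cA \rhd s$. The key identification is that this transported idempotent agrees with the canonical idempotent whose splitting recovers $m$, namely the counit of the $\cA$-action adjunction on $\cM$ presented as a resolution via bases for $[a \rhd n, n']$ with $a \in \Irr(\cA)$ and $n, n' \in \Irr(\cM)$. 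Concretely, I would apply an I=H move in the multifusion category $\begin{bmatrix} \cX & \cM^\op \\ \cM & \cA \end{bmatrix}$, exactly as in the proof of Proposition \ref{prop:biadjoint}, to convert the $a \in \Irr(\cA)$ summation appearing in $e$ into an $s \in \Irr(\cM)$ summation matching this resolution. Since all categories involved are semisimple, checking the snake on simples suffices.

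The second snake identity follows by the mirror calculation, swapping $(\cM, \cA)$ with $(\cM^\op, \cX)$ and $F$ with $G$. The main obstacle is the bookkeeping of the triple relative Deligne product and its re-association, together with the transport of $e$ through the associator; once this is pinned down, each snake reduces to a small variant of the I=H calculation already carried out in the proof of Proposition \ref{prop:biadjoint}.
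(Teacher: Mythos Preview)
Your approach is essentially the paper's: trace a simple $m$ through $(G^*\boxtimes\id_\cM)\circ(\id_\cM\boxtimes F)$, identify the result as the splitting of a transported idempotent on $\bigoplus_s [s,m]_\cA \rhd s$, simplify via an I=H move in the multifusion category $\begin{bmatrix}\cX & \cM^\op \\ \cM & \cA\end{bmatrix}$, and then exhibit an explicit splitting of that idempotent by $m$. Two small notes: your formula for $G^*$ has its arguments swapped (the paper's conventions give $G^*(n\boxtimes m^\op)\cong [m,n]_\cA$, so that $G^*(m\boxtimes s^\op)\cong [s,m]_\cA$), and the paper frames the construction as producing cusp isomorphisms $C,D$ that should then satisfy the swallowtail equations---a point you do not mention, though the paper also leaves that verification implicit.
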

\begin{proof}
To show that $F$ and $G^*$ witness $\cM^\op \cong \cM^*$, we must demonstrate the existence of cusp isomorphisms $C:(G^*\boxtimes\id_\cM) \circ (\id_\cM\boxtimes F) \to \id_{\cM}$ and $D:(\id_{\cM^\op}\boxtimes G^*)\circ(F\boxtimes\id_{\cM^\op}) \to \id_{\cM^\op}$ and show they satisfy the swallowtail equations. The other duality is similar and omitted.

By definition of $F$ and $G$, the component $C_m$ must be an isomorphism from $m$ to the subjobject of $\bigoplus_s [s,m]_\cA \rhd s$ given by $(G^*\boxtimes \id_\cM)(\id_m\boxtimes e)$ (where $e$ is the idempotent in the definition of $S$). Explicitly, this map is given by
\begin{align*}
\bigoplus_s [s,m]_\cA \rhd s &\xrightarrow{\bigoplus_{a,s,t} \sum_{i} [\ev_a\rhd s\circ\beta_{a,s,t}^i, m]_\cA \rhd s} \bigoplus_{a,s,t} [a^*\rhd t,m]_\cA \rhd s
\\
& \cong \bigoplus_{a,s,t} [t,m]_\cA \otimes a\rhd s \xrightarrow{\bigoplus_{a,s} \sum_i \beta_{a,s,t}^i} \bigoplus_t [t,m]_\cA \rhd t.
\end{align*}
Notice that the isomorphism $[a^*\rhd t,m]_\cA \cong [t,m]_\cA \otimes a$ is 
\begin{align*}
[a^*\rhd t,m]_\cA &\cong \bigoplus_b [b\rhd a^*\rhd t,m]\otimes b \cong \bigoplus_b [a^*\rhd t, b^*\rhd m] \otimes b \cong \bigoplus_{b,c} [a^*,b^*\otimes c] \otimes [c\rhd t,m]\otimes b  \\
&\cong \bigoplus_{b,c} [b,c\otimes a] \otimes [c\rhd t,m] \otimes b \cong \bigoplus_c [c\rhd t,m] \otimes c\otimes a\cong [t,m]_\cA \otimes a.
\end{align*}
Unpacking this, we can describe the idempotent in terms of generalized elements (i.e., in terms of maps $x \to \bigoplus_s [s,m]_\cA \rhd s = \bigoplus_{s,a} [a\rhd s,m]_\cA \otimes a\rhd s$). This requires describing the action of the idempotent on the vector space $\bigoplus_{a,s} [a\rhd s,m] \otimes [x,a\rhd s]$. This map is given by:

\[
\bigoplus_{a,s} \sum_{i} 
\tikzmath{
\draw[thick] (0,0) coordinate (a) -- ++(.2,.4) coordinate (1) -- +(0,.4) coordinate (s);
\draw[thick] (1) -- +(.2,-.4) coordinate (m);
\node[xshift=-.15cm,yshift=-.1cm] at (a) {$\scriptstyle a$};
\node[xshift=0cm,yshift=.1cm] at (s) {$\scriptstyle m$};
\node[xshift=.1cm,yshift=-.1cm] at (m) {$\scriptstyle s$};
\roundNbox{fill=white}{(1)}{.2}{0}{0}{$\scriptstyle f_i$}
}
\otimes
\tikzmath{
\draw[thick] (0,0) coordinate (x) -- ++(0,.4) coordinate (1) -- ++(.2,.4) coordinate (s);
\draw[thick] (1) -- +(-.2,.4) coordinate (a);
\roundNbox{fill=white}{(1)}{.2}{0}{0}{$\scriptstyle g_i$}
\node[xshift=-.1cm,yshift=.1cm] at (a) {$\scriptstyle a$};
\node[xshift=.1cm,yshift=.1cm] at (s) {$\scriptstyle s$};
\node[xshift=0cm,yshift=-.1cm] at (x) {$\scriptstyle x$};
}
\mapsto
\bigoplus_{c,t} \sum_{i,\beta_{a,s,t}^j,\gamma_{a,b,c}^k}
\tikzmath{
\draw (0,0) coordinate (c) -- ++(0,.4) coordinate (1) to[out=45,in=270] ++(.4,.4) arc (180:45:.1) -- ++(.1,-.1) coordinate (2);
\draw (1) to[out=135,in=225] ++(.2,.8) coordinate (3) to[out=-45,in=45] (2);
\draw (3) -- ++(0,.4) coordinate (m);
\draw (2) -- ++(0,-.75) coordinate (t);
\roundNbox{fill=white}{(3)}{.2}{0}{0}{$\scriptstyle f_i$}
\node[xshift=0cm,yshift=-.15cm] at (c) {$\scriptstyle c$};
\node[xshift=0cm,yshift=.1cm] at (m) {$\scriptstyle m$};
\node[xshift=0cm,yshift=-.15cm] at (t) {$\scriptstyle t$};
\node[xshift=0cm,yshift=.3cm] at (1) {$\scriptstyle a$};
\node[xshift=.35cm,yshift=.05cm] at (1) {$\scriptstyle b^*$};
\node[xshift=0cm,yshift=.3cm] at (2) {$\scriptstyle s$};
\filldraw[fill=blue,draw=blue,thick] (1) circle (.05);
\filldraw[fill=red,draw=red,thick] (2) circle (.05);
}
\otimes
\tikzmath{
\draw (0,0) coordinate (x) -- ++(0,.4) coordinate (1) to[out=135,in=270] ++(-.4,.3) coordinate (b) to[out=90,in=225] ++(.2,.3) coordinate (2) -- ++(.1,-.1) coordinate (a) arc (225:315:.2) -- ++(.1,.1) coordinate (3) -- ++(0,.2) coordinate (t);
\draw (2) -- ++(0,.2) coordinate (c);
\draw (1) to[out=45,in=-45] (3);
\roundNbox{fill=white}{(1)}{.2}{0}{0}{$\scriptstyle g_i$}
\node[xshift=0cm,yshift=.1cm] at (c) {$\scriptstyle c$};
\node[xshift=.2cm,yshift=.1cm] at (a) {$\scriptstyle b$};
\node[xshift=-.1cm,yshift=0cm] at (b) {$\scriptstyle a$};
\node[xshift=.4cm,yshift=.3cm] at (1) {$\scriptstyle s$};
\node[xshift=0cm,yshift=.1cm] at (t) {$\scriptstyle t$};
\node[xshift=0cm,yshift=-.1cm] at (x) {$\scriptstyle x$};
\filldraw[fill=blue,draw=blue,thick] (2) circle (.05);
\filldraw[fill=red,draw=red,thick] (3) circle (.05);
},
\]
where the red dots range over $\beta_{a,s,t}^j$ as before, and the blue dots range over $\gamma_{a,b,c}^k$ and $(\gamma_{a,b,c}^k)^*$, a choice of basis and dual basis for $[c,a\otimes b^*]$ and $[a\otimes b^*,c]$.

Using an I=H relation, as in the proof of Proposition \ref{prop:biadjoint}, we can rewrite this as
\[
\bigoplus_{c,t} \sum_{i,\delta_{c,t,r}^j,\epsilon_{r,a,s}^k}
\tikzmath{
\draw (0,0) coordinate (c) -- ++(.2,.2) coordinate (1) -- ++(0,.4) coordinate (2) to[out=135,in=225] ++(0,.6) coordinate (3) -- ++(0,.4) coordinate (m);
\draw (2) to[out=45,in=-45] (3);
\draw (.4,0) coordinate (t) -- (1);
\roundNbox{fill=white}{(3)}{.2}{0}{0}{$\scriptstyle f_i$};
\node[xshift=-.1cm,yshift=-.15cm] at (c) {$\scriptstyle c$};
\node[xshift=0cm,yshift=.1cm] at (m) {$\scriptstyle m$};
\node[xshift=.1cm,yshift=-.15cm] at (t) {$\scriptstyle t$};
\filldraw[fill=DarkGreen,draw=DarkGreen,thick] (1) circle (.05);
\filldraw[fill=RealPurple,draw=RealPurple,thick] (2) circle (.05);
}
\otimes
\tikzmath{
\draw (0,0) coordinate (x) -- ++(0,.4) coordinate (1) to[out=135,in=225] ++(0,.6) coordinate (2) -- ++(0,.4) coordinate (3) -- ++(-.2,.2) coordinate (c);
\draw (1) to[out=45,in=-45] (2);
\draw (3) -- ++(.2,.2) coordinate (t);
\roundNbox{fill=white}{(1)}{.2}{0}{0}{$\scriptstyle g_i$};
\node[xshift=-.1cm,yshift=.15cm] at (c) {$\scriptstyle c$};
\node[xshift=0cm,yshift=-.1cm] at (x) {$\scriptstyle x$};
\node[xshift=.1cm,yshift=.15cm] at (t) {$\scriptstyle t$};
\filldraw[fill=DarkGreen,draw=DarkGreen,thick] (2) circle (.05);
\filldraw[fill=RealPurple,draw=RealPurple,thick] (3) circle (.05);
},
\]
with the green and purple circles representing $\delta_{c,t,r}^j$ and $\epsilon_{r,a,s}^k$, another pair of bases and dual bases.

We now show that $m$ is equivalent to the subobject generated by this idempotent. It is enough to do this in the case where $m$ is simple.
Consider the map $i:m \to \bigoplus_s [s,m]_\cA\rhd s \cong \bigoplus_{a,s} [a\rhd s,m]\otimes a\rhd s$
by $\bigoplus_{a,s}\sum_i \beta_{a,s,m}^i \otimes (\beta_{a,s,m}^i)^*$.
It is immediately apparent that this map and its dual split the idempotent above, exhibiting $m$ as the subobject of this isomorphism. This defines the component of $C$ at the object $m$; the isomorphism $D$ is formally dual.
\end{proof}

\begin{lem}\label{lem:etasCscommute}
The natural map
\[\id_\cM \xRightarrow{\eta^\ell_G} (G^*\circ G)\boxtimes \id_\cM \xRightarrow{\eta_F^r} (F^*\boxtimes \id_\cM) \circ (\id_\cM\boxtimes G) \circ(\id_\cM\boxtimes G^*)\circ(F\boxtimes \id_\cM) \xRightarrow{C\circ C} \id_\cM\]
is the identity on $\id_\cM$.
\end{lem}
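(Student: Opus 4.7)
My plan is to verify this identity by a direct basis-level computation. By naturality and semisimplicity of $\cM$, it suffices to check that the component of the composite at each simple $m \in \Irr(\cM)$ equals $\id_m$.

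First, I would unpack the three stages. By the analog of equation (\ref{eq:eta_ell}) for the adjunction $G \dashv G^*$, the component of $\eta^\ell_G$ at $1_\cA$ factors the identity through a sum over simples $s \in \Irr(\cM)$ of internal homs $[s,s]_\cA$ composed with the idempotent defining $S_G := G(1_\cA)$. By (\ref{eq:eta_r}), the unit $\eta^r_F$ at a simple object $t^\op\boxtimes t'$ inserts the saddle-like factorization through $[t,t']_\cX\rhd S$ via another explicit dual-basis sum. By the construction in the proof of Proposition \ref{prop:duality}, the cusp $C_m$ is the splitting of the idempotent on $\bigoplus_s [s,m]_\cA\rhd s$ that identifies $m$ as a subobject, realized by $\sum_i \beta^i_{a,s,m}\otimes(\beta^i_{a,s,m})^*$ (and its dual for the other direction).

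Next, I would combine these ingredients and simplify. The resulting composite is a sum indexed by bases $\beta,\gamma,\delta,\epsilon$ over irreducibles in $\cA$ and $\cM$. Applying an I=H relation in the multifusion category $\begin{bmatrix} \cX & \cM^\op \\ \cM & \cA \end{bmatrix}$, as used in the proofs of Propositions \ref{prop:biadjoint} and \ref{prop:duality}, allows the dual bases to pair off. The idempotent splittings coming from $S$, $S_G$, and $C$ absorb, because each splitting map is followed (or preceded) by its projection back into the fixed subobject. What remains after all the cancellations is a single basis expansion $\sum_i \beta^i_{a,s,m}(\beta^i_{a,s,m})^* = \id_m$, as required.

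The main obstacle will be the normalization bookkeeping: each unit and cusp carries implicit normalizations coming from choices of dual bases and from the splittings of the idempotents defining $S$, $S_G$, and the embedding $m \hookrightarrow \bigoplus_s [s,m]_\cA \rhd s$, and tracking these through to confirm that they combine to exactly $1$ is delicate. An alternative, more conceptual route would be to interpret the composite graphically as a 3D surface-with-strings and observe that it represents a deformation of the flat sheet $\id_\cM$: $\eta^\ell_G$ adds a hemisphere, $\eta^r_F$ pinches it into a saddle, and the two cusps $C$ undo the resulting corners, so the diagram is isotopic to the flat sheet. This would reduce the lemma to a swallowtail-style coherence axiom valid in any planar pivotal 3-category. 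I would attempt the direct computation first and fall back on graphical coherence if the bookkeeping becomes unwieldy.
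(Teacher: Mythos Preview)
Your primary route---a direct basis-level computation using the explicit formulas for $\eta^\ell_G$, $\eta^r_F$, and the cusp splittings, then simplifying via I=H relations---is plausible and should ultimately succeed, but it is not what the paper does. The paper's argument is entirely diagrammatic and never touches a basis: it post-composes the map in question with the \emph{other} cusp $D^{-1}$ (from Proposition~\ref{prop:duality}) and then exhibits a commutative pasting diagram in the 2D string calculus whose cells are either naturality squares or snake-equation bigons for the biadjunctions $(\eta^\ell,\epsilon^\ell)$ and $(\eta^r,\epsilon^r)$. The upshot is that the composite followed by $D^{-1}$ equals $D^{-1}$, hence the composite is the identity. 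No I=H move, no dual-basis bookkeeping, and no normalization tracking is needed.

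Your fallback suggestion---interpreting the surface as a deformation of the flat sheet and invoking swallowtail-type coherence---is much closer in spirit to what the paper actually does, although the paper carries it out concretely with a specific pasting diagram rather than appealing to a general coherence theorem. What the paper's approach buys is that it works uniformly in any 3-category with the given duality and adjunction data, and it sidesteps exactly the normalization issue you flagged as the main obstacle. What your computational approach would buy, if completed, is an independent check that the explicit formulas in equations~(\ref{eq:eta_ell})--(\ref{eq:epsilon_r}) and the cusp maps in Proposition~\ref{prop:duality} are mutually consistent; but for proving the lemma itself it is the harder road.
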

\begin{proof}
In the graphical calculus of string diagrams, this map is given by
\[
\tikzmath{
\draw (0,0) -- (0,1);
} \implies
\tikzmath{
\draw (0,0) -- (0,1);
\draw (-.25,0.5) arc (0:360:.25);
} \implies
\tikzmath{
\draw (0,0) -- +(0,.25) arc (0:180:.25) arc (0:-180:.25) -- +(0,.75) arc (180:0:.25) arc (-180:0:.25) -- +(0,.25);
} \implies
\tikzmath{
\draw (0,0) -- +(0,1);
}\,.
\]
The following pasting diagram commutes, as it is composed of naturality squares and snake equation bigons:
\begin{center}
\begin{tikzcd}
&
\tikzmath{
\draw (0,0) -- (0,1);
\draw (-.25,0.5) arc (0:360:.25);
}
\arrow[r, Rightarrow, "\eta^r"]
\arrow[d, Rightarrow, "D^{-1}"]
&
\tikzmath{
\draw (0,0) -- +(0,.25) arc (0:180:.25) arc (0:-180:.25) -- +(0,.75) arc (180:0:.25) arc (-180:0:.25) -- +(0,.25);
\draw[dashed, rounded corners = 5pt,red] (-1.125,.63) rectangle ++(1.25,.675);
\draw[dashed, rounded corners = 5pt,blue] (-1.125,-.13) rectangle ++(1.25,.675);
}
\arrow[r,Rightarrow,"\textcolor{blue}{D}\textcolor{red}{C}"]
\arrow[d,Rightarrow,"D^{-1}"]
&
\tikzmath{
\draw (0,0) -- +(0,1);
}
\arrow[dd,Rightarrow,"D^{-1}"]
\\
\tikzmath{
\draw (0,0) -- +(0,1);
}
\arrow[ur,Rightarrow,"\eta^\ell"]
\arrow[dr,Rightarrow,"D^{-1}"']
&
\tikzmath{
\draw (0,0) -- +(0,.25) arc (180:0:.25) arc (-180:0:.25) -- +(0,1);
\draw (0,1) arc (180:0:.25) arc (0:-180:.25);
}
\arrow[r,Rightarrow,"\eta^r"]
\arrow[d,Rightarrow,bend left=10,"\epsilon^\ell"]
&
\tikzmath{
\draw (0,0) -- +(0,.25) arc (180:0:.25) arc (-180:0:.25) -- +(0,.75) arc (0:180:.25) arc (0:-180:.25) -- +(0,.75) arc (180:0:.25) arc (-180:0:.25) -- +(0,.25);
\draw[dashed, rounded corners = 5pt,red] (-.125,1.43) rectangle ++(1.25,.675);
\draw[dashed, rounded corners = 5pt,blue] (-.125,.63) rectangle ++(1.25,.675);
}
\arrow[d,Rightarrow,"\epsilon^\ell"]
\arrow[dr,Rightarrow,"\textcolor{blue}{D}\textcolor{red}{C}"]
\\
& \tikzmath{
\draw (0,0) -- +(0,.25) arc (180:0:.25) arc (-180:0:.25) -- +(0,.25);
}
\arrow[u,Rightarrow,bend left=10,"\eta^\ell"]
\arrow[r,Rightarrow,"\eta^r"]
\arrow[rr,bend right=30,equal]
&
\tikzmath[scale=-1]{
\draw (0,0) -- +(0,.25) arc (180:0:.25) arc (-180:0:.25) -- +(0,1);
\draw (0,1) arc (180:0:.25) arc (0:-180:.25);
}
\arrow[r,Rightarrow,"\epsilon^r"]
&
\tikzmath{
\draw (0,0) -- +(0,.25) arc (180:0:.25) arc (-180:0:.25) -- +(0,.25);
}
\end{tikzcd}.
\end{center}
The result follows.
\end{proof}

By duality, there are similar versions of Lemma \ref{lem:etasCscommute} for $\cM^\op$ and for the composite $(C\circ C)\eta_G^r\eta_F^\ell$.

We denote the map $y \mapsto F^*\circ(\cM^\op\boxtimes y)\circ F$ by $\Tr:\End({}_\cX\cM) \to \cZ^\cA(\cX)$; this corresponds to the categorified trace of \cite{HPT2015}.

\begin{prop}\label{prop:trace_rightadj}
    The map $\Tr:\End({}_\cX\cM) \to \cZ^\cA(\cX)$ given by $F^* \circ (\cM^\op\otimes -) \circ F$ is right adjoint to the action map $\Phi:\cZ^\cA(\cX) \to \End({}_\cX\cM)$.
\end{prop}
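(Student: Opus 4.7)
The plan is to exhibit a natural isomorphism $\Hom_{\End(\cM)}(\Phi(z), y) \cong \Hom_{\cZ^\cA(\cX)}(z, \Tr(y))$ by composing two natural bijections of 3-cell Hom-sets: one from the biadjunction $F \dashv F^* \dashv F$ of Proposition \ref{prop:biadjoint}, and one from the 1-cell duality $\cM \sim \cM^\op$ of Proposition \ref{prop:duality}. The biadjunction side is routine: applied internally to 2-cells in the 3-category, $F \dashv F^*$ yields $\Hom(z, F^* \circ \gamma) \cong \Hom(F \circ z, \gamma)$ for any 2-cell $\gamma : \id_\cX \Rightarrow \cM^\op \boxtimes_\cA \cM$, and setting $\gamma = (\id_{\cM^\op} \boxtimes y) \circ F$ specializes this to
\[ \Hom(z, \Tr(y)) \;\cong\; \Hom(F \circ z,\ (\id_{\cM^\op} \boxtimes y) \circ F). \]

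For the duality side, Proposition \ref{prop:duality} (with $F$ as coevaluation, $G^*$ as evaluation, and cusp isomorphism $C$) supplies an equivalence between the category of 3-cells between parallel 2-cells $\cM \Rightarrow \cM$ and the category of 3-cells between parallel 2-cells $\id_\cX \Rightarrow \cM^\op \boxtimes_\cA \cM$. On objects the equivalence sends $\alpha \mapsto (\id_{\cM^\op} \boxtimes \alpha) \circ F$, with inverse given by whiskering with $\cM$ on the left and then collapsing the resulting snake via $C$. Applied to $\Phi(z) = z \boxtimes \id_\cM$ and to $y$, the image of $\Phi(z)$ is $(\id_{\cM^\op} \boxtimes (z \boxtimes \id_\cM)) \circ F$, which by Gray interchange (legitimate by the coherence results invoked earlier) agrees canonically with $F \circ z$. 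Hence $\Hom(\Phi(z), y) \cong \Hom(F \circ z, (\id_{\cM^\op} \boxtimes y) \circ F)$, and composing with the bijection of the previous paragraph gives $\Phi \dashv \Tr$, with naturality in $z$ and $y$ inherited from the naturality of each constituent map.

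I expect the main obstacle to be making the duality-induced equivalence of the second step rigorous at the level of 3-cell Homs: one must verify that whiskering with $\cM$ followed by the cusp iso $C$ really supplies a two-sided inverse to the functor $\alpha \mapsto (\id_{\cM^\op} \boxtimes \alpha) \circ F$, rather than merely a one-sided retraction. This reduces to the snake equations for $\cM \dashv \cM^\op$ combined with the compatibility between the cusp isomorphisms and the biadjunction units recorded in Lemma \ref{lem:etasCscommute} and its dual variants. Once that compatibility is handled, the unit and counit of $\Phi \dashv \Tr$ can be read off as $\eta_z = $ the Gray-interchanged whiskering of $\eta^\ell$ by $z$, and $\epsilon_y$ as $\epsilon^\ell$ composed with $C$ to pop the $\cM$-bubble; the triangle identities then follow from the snake equations of both the biadjunction and the 1-cell duality.
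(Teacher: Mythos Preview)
Your argument is correct and takes a genuinely different route from the paper's. You work purely 3-categorically: first strip the outer $F^*$ from $\Tr(y)$ using the 2-cell adjunction $F\dashv F^*$ of Proposition~\ref{prop:biadjoint}, then invoke the 1-cell duality of Proposition~\ref{prop:duality} to identify $\Hom(\Phi(z),y)$ with $\Hom\bigl(F\circ z,\,(\id_{\cM^\op}\boxtimes y)\circ F\bigr)$. The identification $(\id_{\cM^\op}\boxtimes\Phi(z))\circ F\cong F\circ z$ is indeed a Gray interchanger: since $\Phi(z)$ is the whiskering of $z$ along $\cM$, whiskering further by $\cM^\op$ places $z$ on the outer $\cX$-slot of $\cM^\op\boxtimes_\cA\cM$, and a 2-cell $F$ out of the monoidal unit $\id_\cX$ commutes with such whiskerings up to the interchanger. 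The paper instead argues concretely: it reduces to $z=1_\cX$ by the module-functor structure of $\Phi$ and $\Tr$, rewrites $[1_\cX,\Tr(y)]\cong[S,(\cM^\op\boxtimes y)S]$ via Proposition~\ref{prop:biadjoint}, and then computes this Hom-space by hand from the idempotent $e$ defining $S$ together with semisimplicity of $\cM$. Your version is cleaner and is exactly the kind of argument that transfers verbatim to any planar pivotal 3-category satisfying the hypotheses of Theorem~\ref{thm:3dapa}; the paper's version has the virtue of grounding the adjunction in the explicit model for $S$, and it avoids having to track interchangers.

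One small overreach: you do not actually need Lemma~\ref{lem:etasCscommute} for step two. The equivalence $\Hom(\cM,\cM)\simeq\Hom(\id_\cX,\cM^\op\boxtimes_\cA\cM)$ already follows from the cusp isomorphisms $C,D$ supplied by Proposition~\ref{prop:duality}, via the two snake-shaped round-trips; nothing further is required to see that $\alpha\mapsto(\id_{\cM^\op}\boxtimes\alpha)\circ F$ is fully faithful. Lemma~\ref{lem:etasCscommute} records a finer compatibility between the cusps and the adjunction units $\eta^\ell,\eta^r$, which the paper uses later (Lemma~\ref{lem:phitradj}) to identify the unit and counit of $\Phi\dashv\Tr$ graphically, but it is not needed to establish the adjunction itself. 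Relatedly, your description of the counit as ``$\epsilon^\ell$ composed with $C$'' does not match the paper's $(C\circ C)\eta^r$; your bijection is correct, but if you want to name the counit you should trace through both steps carefully.
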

\begin{proof}
    It is enough to show that $[1_\cX,F^*yF] \cong [\id_\cM,y]$ for all $y:\cM \to \cM$ since all the functors involved are module functors. By Proposition \ref{prop:biadjoint}, we have 
\[
[1_\cX,F^*yF] \cong [S,yF] = [S,(\cM^\op\boxtimes y)(S)],
\]
which is isomorphic to the image of $[S,(\cM^\op\boxtimes y)(\bigoplus s^\op\boxtimes s)]$ after cutting down by the idempotent $(\cM^\op\boxtimes y)(e)$. Now $(\cM^\op\boxtimes y)(\bigoplus s^\op\boxtimes s) = \bigoplus s^\op\boxtimes ys \cong \bigoplus_{s,t} s^\op\boxtimes \bbC^{n_{y,s}^t}\otimes t$, where $n_{y,s}^t$ denotes the dimension of $[t,ys]$. Let $n = \max(n_{y,s}^t)$; then this object includes into $\left(\bigoplus s^\op\boxtimes t\right)^{\oplus n}$. Recalling the definition of $e$, it follows that the image of the idempotent $(\cM^\op\boxtimes y)(e)$ is the same as the image of the map
\[ \bigoplus s^\op\boxtimes ys \hookrightarrow \left(\bigoplus s^\op\boxtimes t\right)^{\oplus n} \twoheadrightarrow \left(\bigoplus s^\op\boxtimes s\right)^{\oplus n} \twoheadrightarrow S^{\oplus n}. \]

If we are mapping out of $S$, though, we are already mapping into this copy of $S^{\oplus n}$. Thus $[S,(\cM^\op\boxtimes y)(S)] \cong [S,(\cM^\op\boxtimes y)(\bigoplus s^\op\boxtimes s)]$. Finally, notice that we have $[S,\bigoplus s^\op\boxtimes ys] \cong \bigoplus [s,ys] \cong [\id_\cM,y]$ by Proposition \ref{prop:biadjoint} and semisimplicity. \qedhere

\end{proof}

The maps $\Tr$ and $\Phi$ are represented graphically by
\[ \Tr\left(\,
\tikzmath{
\draw[thick] (0,0) rectangle (1,1);
\draw[red,thick] (.5,0) -- (.5,1);
}
\,\right) =
\tikzmath{
\draw[thick] (0,0) arc (-90:-180:.5 and .25) arc (-180:0:.5 and .25) -- +(0,1) arc (0:540:.5 and .25) -- +(0,-1);
\draw[dashed,thick] (.5,.25) arc (0:180:.5 and .25);
\draw[red,thick] (0,0) -- +(0,1);
}\,,\ \ \ \ 
\Phi\left(\,
\tikzmath{
\draw[DarkGreen,thick] (0,0) -- (0,1);
}
\,\right)
= \tikzmath{
\draw[thick] (0,0) rectangle (1,1);
\draw[DarkGreen,thick,knot] (.6,-.25) -- +(0,1);
}\,.
\]
Moreover, we can construct the unit and the counit of the adjunction $\Phi \dashv \Tr$ with the cups and caps from the graphical calculus above.
\begin{lem}\label{lem:phitradj}
The unit of the adjunction $\Phi\dashv \Tr$ is given by tensoring with $\eta^\ell$, while the counit is given by $(C\circ C)\eta^r$; graphically, the counit and unit are
\[
\tikzmath{
\draw (0,0) -- +(-1.25,0) arc (-90:-180:.5 and .25) -- +(0,1.5) arc (-180:-90:.5 and .25) -- +(1.25,0) arc (-90:90:.5 and .25) arc (270:90:.5 and .25) -- +(.75,0) -- +(.75,-2) -- +(.5,-2);
\draw (0,0) -- +(-1.25,0) arc (-90:-180:.5 and .25) -- +(0,1.5) arc (180:90:.5 and .25) arc (-90:90:.5 and .25) -- +(-.75,0) -- +(-.75,-2) -- +(-.5,-2);
\draw (0,0)  arc (-90:0:.5 and .25) -- +(0,1.5);
\draw[dashed] (.5,.25) arc (0:90:.5 and .25) -- +(-1.25,0) arc (90:180:.5 and .25);
\draw (-.5,2.25) -- +(0,-.25) arc (0:-180:.125) -- +(0,.25);
\draw[red,thick] (-.625,0) -- +(0,1.5);
}\text{   and }\ \
\tikzmath{
\draw (-.25,1) arc (-90:-180:.5) arc (-180:360:.5 and .25) arc (0:-90:.5);
\draw[DarkGreen,thick,knot] (-.15,.1) -- +(0,1.75);
}
\,,
\]
respectively (with cusps omitted).
\end{lem}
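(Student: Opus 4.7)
The plan is to verify that the proposed unit $\eta$ and counit $\epsilon$ satisfy the triangle identities for an adjunction, which together with the existence of the adjunction $\Phi \dashv \Tr$ from Proposition \ref{prop:trace_rightadj} and uniqueness of adjoints forces them to be the unit and counit of that adjunction.

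First I would make explicit what the proposed natural transformations are and check they have the correct sources and targets. The unit $\eta_x : x \to \Tr(\Phi(x)) = F^* \circ (\cM^\op \boxtimes \Phi(x)) \circ F$ is obtained by applying $\eta^\ell : \id_\cX \Rightarrow F^* F$ and sliding the $x$-strand through the newly created tube via the half-braiding of $x \in \cZ^\cA(\cX)$. Naturality in $x$ follows from naturality of $\eta^\ell$ combined with the hexagon axioms for the half-braiding. The counit $\epsilon_y : \Phi(\Tr(y)) \to y$ wraps the $y$-strand inside a tube $F F^* (\cM^\op \boxtimes y)$, applies $\eta^r$ to nest it, and then contracts both cusps using $C \circ C$; Proposition \ref{prop:duality} ensures these cusp isomorphisms are well defined, and naturality in $y$ is automatic.

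Next I would verify the triangle identity $(\epsilon \Phi)(\Phi \eta) = \id_\Phi$. Whiskering with $\Phi$ places an $x$-strand on the $\cM$-sheet throughout the composite. By naturality of the tube-creation and tube-destruction moves, this strand can be slid out of the way, leaving exactly the cobordism $(C \circ C) \eta^r \eta^\ell$ that is the subject of Lemma \ref{lem:etasCscommute}; that lemma shows the composite is the identity. For the other triangle identity $(\Tr \epsilon)(\eta \Tr) = \id_\Tr$, the same strategy applies, now using the version of Lemma \ref{lem:etasCscommute} for $\cM^\op$ remarked upon immediately after that lemma, together with the snake identity for $F^* \dashv F$ from Proposition \ref{prop:biadjoint}.

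The main obstacle will be correctly tracking how the cusp isomorphisms from Proposition \ref{prop:duality} interact with the units and counits of the biadjunction $F \dashv F^* \dashv F$, since each triangle identity involves tubes of somewhat different shapes being created and then collapsed. The coherence results for Gray-categorical graphical calculi cited in Section \ref{sec:background} guarantee that all such manipulations are unambiguous, so the verification reduces to a finite sequence of snake moves plus one invocation of Lemma \ref{lem:etasCscommute} in each triangle.
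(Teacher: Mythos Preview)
Your proposal is correct and follows essentially the same approach as the paper: both arguments reduce to verifying the two triangle identities for the proposed unit and counit, and both identify Lemma~\ref{lem:etasCscommute} (and its $\cM^\op$ variant) as the key input that collapses the composite $(C\circ C)\eta^r\eta^\ell$ to the identity. The paper's proof is slightly more terse, simply drawing out the two snakes in the string-diagram calculus and invoking Lemma~\ref{lem:etasCscommute} directly, without separately discussing naturality or the strand-sliding you describe; your additional remarks about tracking the $x$- and $y$-strands are correct but are absorbed into the naturality squares implicit in the graphical calculus.
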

\begin{proof}
It is enough to show that these maps satisfy the snake equations. In the graphical calculus of string diagrams, one snake is given by
\[
\tikzmath{
\draw (0,0) -- (0,1);
\filldraw[DarkGreen] (.25,.5) circle (.05);
}
\ \xRightarrow{\eta^\ell_F}\ 
\tikzmath{
\draw (0,0) -- (0,1);
\filldraw[DarkGreen] (.5,.75) circle (.05);
\draw (.25,.5) circle (.15); 
}
\ \xRightarrow{\eta^r_G}\ 
\tikzmath{
\draw (0,0) -- (0,.25) arc (180:0:.25) arc (-180:0:.25) -- +(0,0.75) arc (0:180:.25) arc (0:-180:.25) -- +(0,0.5);
\filldraw[DarkGreen] (1.25,1.4) circle (.05);
}
\ \xRightarrow{C\circ C}\ 
\tikzmath{
\draw (0,0) -- (0,1);
\filldraw[DarkGreen] (.25,.5) circle (.05);
},
\]
and the other is given by
\[
\tikzmath{
\draw (0,0) arc (0:360:.25);
\filldraw[red] (0,0) circle (.05);
}
\ \xRightarrow{\eta^\ell_F}
\tikzmath{
\draw (0,0) arc (0:360:.25);
\draw (-.75,0) arc (0:360:.25);
\filldraw[red] (0,0) circle (.05);
}
\ \xRightarrow{\eta_G^r}\ 
\tikzmath{
\draw (0,0) arc (0:-180:.25) arc (0:180:.25) arc (0:-180:.25) -- +(0,.75) arc (180:0:.25) arc (-180:0:.25) arc (180:0:.25) -- (0,0);
\filldraw[red] (0,.375) circle (.05);
}
\ \xRightarrow{C\circ C}\ 
\tikzmath{
\draw (0,0) arc (0:360:.25);
\filldraw[red] (0,0) circle (.05);
}.
\]
Both of these are equal to the identity by an application of Lemma \ref{lem:etasCscommute}.
\end{proof}

Notice that $\epsilon^\ell$ gives rise to a multiplication map $\mu_{x,y}:\Tr(x)\circ\Tr(y) \to \Tr(x\circ y)$, denoted by the pair of pants
\[
\tikzmath{
\draw (0,0) arc (-180:0:2*\x and \x) arc (180:0:3*\x) arc (-180:0:2*\x and \x) arc (0:60:6*\x) arc (240:180:{4*\x})
arc (0:540:2*\x and \x) 
arc (0:-60:4*\x) arc (120:180:6*\x);
\draw[dashed] (0,0) arc (180:0:2*\x and \x);
\draw[dashed] (10*\x,0) arc (180:0:2*\x and \x);
\draw[red] (2*\x,-\x) arc (180:120:6*\x) arc (300:360:{3*\x}) -- +(0,0.24);
\draw[blue] (12*\x,-\x) arc (0:60:6*\x) arc (240:180:{3*\x}) -- +(0,0.24);
}
.
\]

This multiplication agrees with the multiplication map given in \cite{HPT2015}.
\begin{prop}\label{prop:mult}
    The map $\mu_{x,y}$ is the mate of the map
    \[ \Phi(\Tr(x)\circ\Tr(y)) \xrightarrow{\mu_\Phi}\Phi(\Tr(x)) \circ \Phi(\Tr(y)) \xrightarrow{\epsilon\circ\epsilon} x\circ y \]
    under the adjunction $\Phi\dashv \Tr$.
\end{prop}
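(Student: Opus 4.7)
The plan is to translate both sides of the claimed equality into the three-dimensional graphical calculus of $\cA-\FusCat$ and then transform one into the other using naturality, the snake equations for the adjunctions $F\dashv F^*$ and $F^*\dashv F$, and Lemma \ref{lem:etasCscommute}. The only moving parts are the explicit formula for $\eta^\ell,\eta^r,\epsilon^\ell,\epsilon^r$ and the description of the unit/counit of $\Phi\dashv \Tr$ from Lemma \ref{lem:phitradj}.

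First I would recall that for any adjunction $L\dashv R$, the mate of a 2-cell $\alpha:L(A)\Rightarrow B$ is the composite $A\xRightarrow{\eta_A} R L(A)\xRightarrow{R(\alpha)} R(B)$. Specializing to $L=\Phi$, $R=\Tr$, $A=\Tr(x)\circ \Tr(y)$, $B=x\circ y$ and $\alpha=(\epsilon\circ\epsilon)\circ \mu_\Phi$, and inserting the graphical formulas of Lemma \ref{lem:phitradj}, the RHS becomes an explicit picture: two parallel trace tubes with $x,y$ threaded through them, each ``capped'' by an $\eta^\ell$-loop coming from the unit of $\Phi\dashv\Tr$, then each pair of caps annihilated by a $(C\circ C)\eta^r$ coming from the counit $\epsilon\circ\epsilon$, all wrapped inside one enclosing trace tube coming from the outer application of $\Tr$.

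The LHS, by definition, is the same two trace tubes joined by a single saddle $\epsilon^\ell$ at their midpoint (the pair of pants). The heart of the argument is therefore a local manipulation in the middle region: I would (i) use naturality of $\eta^r_F$ and $\eta^\ell_F$ to slide the two inner caps adjacent to the merging region; (ii) apply a snake equation for $F\dashv F^*$ to cancel an adjacent $\epsilon^\ell$–$\eta^r$ configuration into an identity, leaving exactly one $\epsilon^\ell$ at the fusion seam; and (iii) invoke Lemma \ref{lem:etasCscommute} (and its $\cM^\op$/$(C\circ C)\eta^r_G\eta^\ell_F$ variants, noted right after that lemma) to collapse the remaining pairs of cusp isomorphisms composed with $\eta^r\eta^\ell$ units into identity 2-cells. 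Once these collapses are performed, what remains is precisely the pair-of-pants $\epsilon^\ell$ between the two trace tubes, which is $\mu_{x,y}$.

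The main obstacle I anticipate is bookkeeping: the mate construction for $\Phi\dashv\Tr$ involves both the ``outer'' adjunction (whose unit and counit were identified in Lemma \ref{lem:phitradj} in terms of $\eta^\ell$ and $(C\circ C)\eta^r$) and the ``inner'' adjunctions $F\dashv F^*$, $F^*\dashv F$ used to define $\Tr$ itself. One must carefully track which caps are $\eta^\ell$ vs $\eta^r$ and which cusps are $C$ vs $D$ so that the cancellations in step (iii) match the hypotheses of Lemma \ref{lem:etasCscommute} exactly rather than producing a stray cusp or orientation mismatch. Once the diagrams are drawn carefully, however, the reduction is mechanical, and no new coherence is required beyond the snake equations and Lemma \ref{lem:etasCscommute}.
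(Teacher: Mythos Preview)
Your proposal is correct and follows essentially the same route as the paper: both reduce the mate to $\mu_{x,y}$ using naturality, one snake equation, and Lemma~\ref{lem:etasCscommute}, with the paper organizing these moves as an explicit pasting diagram in the 2D string-diagram calculus rather than in the 3D language you use. One small imprecision: in your step (ii) the snake that fires is the $(\eta^\ell_G,\epsilon^\ell_G)$ snake for $G\dashv G^*$, not an $\epsilon^\ell$--$\eta^r$ cancellation for $F\dashv F^*$, so be careful to match the adjunction labels when you draw it out.
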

\begin{proof}
We want to show that $\mu_{x,y}$ is equal to the map
\[ \Tr(x) \circ \Tr(y) \Rightarrow \Tr(\Phi(\Tr(x)\circ\Tr(y))) \cong \Tr(\Phi(\Tr(x))\circ\Phi(\Tr(y))) \Rightarrow \Tr(x\circ y). \]
From the characterization of the unit and counit of $\Phi \dashv \Tr$ in Lemma \ref{lem:phitradj}, we can write this map in the graphical calculus of string diagrams as
\[
\tikzmath{
\draw (0,0) arc (0:360:.25);
\draw (0,.75) arc (0:360:.25);
\filldraw[red] (0,0) circle (.05);
\filldraw[blue] (0,.75) circle (.05);
}
\ \xRightarrow{\eta}\ 
\tikzmath{
\draw (0,0) arc (0:360:.25);
\draw (0,1.5) arc (0:360:.25);
\draw (-.5,.75) arc (0:360:.25);
\filldraw[red] (0,0) circle (.05);
\filldraw[blue] (0,1.5) circle (.05);
}
\ \xRightarrow{\varphi}\ 
\tikzmath{
\draw (0,0) arc (0:360:.25);
\draw (0,.75) arc (0:360:.25);
\draw (-.75,-.25) arc (0:-180:.25) -- +(0,1.25) arc (180:0:.25) -- (-.75,-.25);
\filldraw[red] (0,0) circle (.05);
\filldraw[blue] (0,.75) circle (.05);
}
\ \xRightarrow{\eta\eta}\ 
\tikzmath{
\draw (0,0) arc (0:-180:.25) arc (0:180:.25) -- +(0,-.25) arc (0:-180:.25) -- +(0,2.75) arc (180:0:.25) -- +(0,-.25) arc (-180:0:.25) arc (180:0:.25) -- +(0,-.75) arc (0:-180:.25) arc (0:180:.25) -- +(0,-.75) arc (-180:0:.25) arc (180:0:.25) -- (0,0);
\filldraw[red] (0,.375) circle (.05);
\filldraw[blue] (0,1.875) circle (.05);
}
\ \xRightarrow{CC}\ 
\tikzmath{
\draw (0,0) arc (0:-180:.25) -- +(0,.25) arc (180:0:.25) -- (0,0);
\filldraw[red] (0,0) circle (.05);
\filldraw[blue] (0,.25) circle (.05);
}\,,
\]
where $\varphi$ is the interchanger. To show this is equal to the multiplication map $\mu_{x,y}$, consider the pasting diagram below:
\[
\begin{tikzcd}
&
\tikzmath{
\draw (0,0) arc (0:360:.25);
\draw (0,1.5) arc (0:360:.25);
\draw (-.5,.75) arc (0:360:.25);
\filldraw[red] (0,0) circle (.05);
\filldraw[blue] (0,1.5) circle (.05);
}
\arrow[r, Rightarrow, "\varphi"]
&
\tikzmath{
\draw (0,0) arc (0:360:.25);
\draw (0,.75) arc (0:360:.25);
\draw (-.75,-.25) arc (0:-180:.25) -- +(0,1.25) arc (180:0:.25) -- (-.75,-.25);
\filldraw[red] (0,0) circle (.05);
\filldraw[blue] (0,.75) circle (.05);
}
\arrow[r, Rightarrow, "\eta\eta"]
&
\tikzmath{
\draw (0,0) arc (0:-180:.25) arc (0:180:.25) -- +(0,-.25) arc (0:-180:.25) -- +(0,2.75) arc (180:0:.25) -- +(0,-.25) arc (-180:0:.25) arc (180:0:.25) -- +(0,-.75) arc (0:-180:.25) arc (0:180:.25) -- +(0,-.75) arc (-180:0:.25) arc (180:0:.25) -- (0,0);
\filldraw[red] (0,.375) circle (.05);
\filldraw[blue] (0,1.875) circle (.05);
}
\arrow[r, Rightarrow, "CC"]
&
\tikzmath{
\draw (0,0) arc (0:-180:.25) -- +(0,.25) arc (180:0:.25) -- (0,0);
\filldraw[red] (0,0) circle (.05);
\filldraw[blue] (0,.25) circle (.05);
}
\arrow[dd, equal]
\\
\tikzmath{
\draw (0,0) arc (0:360:.25);
\draw (0,.75) arc (0:360:.25);
\filldraw[red] (0,0) circle (.05);
\filldraw[blue] (0,.75) circle (.05);
}
\arrow[ur, Rightarrow, "\eta"]
\arrow[r, Rightarrow, "\eta\eta"]
\arrow[drrr, equal, bend right = 25]
&
\tikzmath{
\draw (0,0) arc (0:360:.25);
\draw (0,2.25) arc (0:360:.25);
\draw (-.5,.75) arc (0:360:.25);
\draw (-.5,1.5) arc (0:360:.25);
\filldraw[red] (0,0) circle (.05);
\filldraw[blue] (0,2.25) circle (.05);
}
\arrow[u, Rightarrow, "\epsilon"]
\arrow[r, Rightarrow, "\varphi"]
&
\tikzmath{
\draw (0,0) arc (0:360:.25);
\draw (-.75,-.125) arc (0:-180:.25) -- +(0,.5) arc (180:0:.25) -- +(0,-.5);
\draw (0,1.25) arc (0:360:.25);
\draw (-.75,1.125) arc (0:-180:.25) -- +(0,.5) arc (180:0:.25) -- +(0,-.5);
\filldraw[red] (0,0) circle (.05);
\filldraw[blue] (0,1.25) circle (.05);
}
\arrow[u, Rightarrow, "\epsilon"]
\arrow[r, Rightarrow, "\eta\eta"]
&
\tikzmath{
\draw (0,0) arc (0:-180:.25) arc (0:180:.25) arc (0:-180:.25) -- +(0,.75) arc (180:0:.25) arc (-180:0:.25) arc (180:0:.25) -- (0,0);
\filldraw[red] (0,.375) circle (.05);
\draw (0,1.5) arc (0:-180:.25) arc (0:180:.25) arc (0:-180:.25) -- +(0,.75) arc (180:0:.25) arc (-180:0:.25) arc (180:0:.25) -- (0,1.5);
\filldraw[blue] (0,1.875) circle (.05);
}
\arrow[u, Rightarrow, "\epsilon"]
\\
& & &
\tikzmath{
\draw (0,0) arc (0:360:.25);
\draw (0,.75) arc (0:360:.25);
\filldraw[red] (0,0) circle (.05);
\filldraw[blue] (0,.75) circle (.05);
}
\arrow[u, Rightarrow, "CC"]
\arrow[r, Rightarrow, "\epsilon"]
&
\tikzmath{
\draw (0,0) arc (0:-180:.25) -- +(0,.25) arc (180:0:.25) -- (0,0);
\filldraw[red] (0,0) circle (.05);
\filldraw[blue] (0,.25) circle (.05);
}
\end{tikzcd}.
\]
Most of the cells are naturality squares. The upper left triangle commutes by the snake for $\eta^\ell_G,\epsilon^\ell_G$, and the lower left triangle commutes by an application of Lemma \ref{lem:etasCscommute}. The upper map is the mate of $\Phi(\Tr(x)\circ\Tr(y)) \xrightarrow{\mu_\Phi}\Phi(\Tr(x)) \circ \Phi(\Tr(y)) \xrightarrow{\epsilon\circ\epsilon} x\circ y$, as described above, while the lower map is $\mu_{x,y}$. Thus the two multiplications are equal.
\end{proof}

\begin{rem}\label{rem:thirdmult}
There is a third graphical multiplication $\Tr(x)\boxtimes\Tr(y) \to \Tr(x\circ y)$, given by
\[ 
\tikzmath{
\draw (0,0) arc (0:360:.25);
\draw (-.75,0) arc (0:360:.25);
\filldraw[red] (0,0) circle (.05);
\filldraw[blue] (-.75,0) circle (.05);
}
\ \xRightarrow{\eta}\ 
\tikzmath{
\draw (0,0) arc (0:-180:.25) arc (0:180:.25) arc (0:-180:.25) -- +(0,.75) arc (180:0:.25) arc (-180:0:.25) arc (180:0:.25) -- (0,0);
\filldraw[red] (0,.375) circle (.05);
\filldraw[blue] (-1,.75) circle (.05);
}
\ \xRightarrow{CC}\ 
\tikzmath{
\draw (0,0) arc (0:-180:.25) -- +(0,.25) arc (180:0:.25) -- (0,0);
\filldraw[red] (0,0) circle (.05);
\filldraw[blue] (0,.25) circle (.05);
}.
\]
This multiplication also agrees with the two above (under the interchanger $\phi:\Tr(x)\boxtimes\Tr(y) \cong \Tr(x)\circ\Tr(y)$), via a similar proof to above. Lemma \ref{lem:etasCscommute} records the fact that this multiplication similarly absorbs the unit $\eta^\ell$. 
\end{rem}

Since $\Phi \dashv \Tr$, we have canonical traciators, as shown in \cite{HPT2015}. In this context, these traciators can be broken down as the composition of two half twists, denoted graphically by
\[
\tau^\ell_y = \tikzmath{
\tubeguts{300}{75}{0.3}{9}{2};
\draw[red] (0.6,-0.3) .. controls (0.6,.3) .. (0,1.35);
\draw[red, dashed] (0,1.35) .. controls (0,1.95) .. (0.6,3);
}\,,\ \ 
\tau^r_y =
\tikzmath[xscale=-1]{
\tubeguts{285}{60}{0.3}{9}{2};
\draw[red] (0.6,-0.3) .. controls (0.6,.3) .. (0,1.35);
\draw[red, dashed] (0,1.35) .. controls (0,1.95) .. (0.6,3);
}\,,
\]
where the red string denotes a module functor $y:\cM \to \cM$. The components of these natural transformations are given by

\begin{equation}\label{eq:tau_l}
(\tau^\ell_y)_1 = e\left( \bigoplus_{s \in \cM} s^\op\boxtimes ys \cong \bigoplus_{s,t} s^\op\boxtimes [t,ys]\otimes t \cong \bigoplus_{s,t} s^\op\otimes[y^*t,s]\boxtimes t \cong \bigoplus_{t} y^*t^\op\boxtimes t \right)e,
\end{equation}
and
\begin{multline}\label{eq:tau_r}
(\tau^r_y)_{m^\op\boxtimes n} = \Bigg([m,yn]_{\cX} \cong \bigoplus_{x \in \cX} [x\rhd m,yn]\otimes x \cong \bigoplus_x [y^*(x\rhd m),n]\otimes x \\ \cong \bigoplus_x [x\rhd(y^*m),n]\otimes x \cong [y^*m,n]_\cX \Bigg).
\end{multline}

It is immediate that these are both natural isomorphisms. They also satisfy compatibilities with the structures defined above.

\begin{lem}\label{lem:trace_cup}
The half traciators satisfy the equation
\[
\tikzmath{
\draw (0,1) arc (180:225:.3 and .15) coordinate (a);
\draw (0,1) arc (180:90:.3 and .15) coordinate (b);
\draw[red] (a) -- +(0,-.75) arc (-180:0:.15) to [out=90, in = 225] (.6,.5);
\draw[red,dashed] (.6,.5) to [out=135,in=270] (b);
\draw (0,0) arc (-180:0:.3) -- +(0,1) arc (0:540:.3 and .15) -- (0,0);
} =
\tikzmath{
\draw (0,1) arc (180:225:.3 and .15) coordinate (a);
\draw (0,1) arc (180:90:.3 and .15) coordinate (b);
\draw[red] (a) to [out=270,in=45] (0,.35);
\draw[red,dashed] (0,.35) to [out=135,in=270] (.1,.25) arc (-180:0:.15) to [out=90, in=270] (b);
\draw (0,0) arc (-180:0:.3) -- +(0,1) arc (0:540:.3 and .15) -- (0,0);
}\,.
\]
\end{lem}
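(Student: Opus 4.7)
The plan is to verify the equation at the level of components, using the explicit formulas for $\eta^\ell$ in \eqref{eq:eta_ell} and for $\tau^\ell$ in \eqref{eq:tau_l}. First, I would parse each diagram as a composite of the basic structural maps: both sides are natural transformations between the same pair of functors built from $F$, $F^*$, the unit $\eta^\ell$, and the red string labeled by a module functor $y \in \End({}_\cX\cM)$. The only structural difference between the two sides is the half twist $\tau^\ell_y$ on the tube surface versus the path of the red string as it enters the cap.

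Next, by Proposition \ref{prop:biadjoint}, any such natural transformation out of $F$ is determined by its component at $1_\cX$, so it suffices to check the equation on this single component. I would substitute the formulas \eqref{eq:eta_ell} and \eqref{eq:tau_l} to write each side as a composite of canonical isomorphisms between sums of internal hom spaces. The key observation is that $(\tau^\ell_y)_1$ is exactly the chain of isomorphisms $\bigoplus_s s^\op \boxtimes ys \cong \bigoplus_{s,t} s^\op \boxtimes [t, ys] \otimes t \cong \bigoplus_t (y^*t)^\op \boxtimes t$ arising from the $y \dashv y^*$ adjunction, while $\eta^\ell_{1_\cX}$ is the canonical inclusion $1_\cX \hookrightarrow \bigoplus_s [s,s]_\cX$ followed by the idempotent splitting $e$. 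The left side precomposes the twist with the inclusion through $s^\op \boxtimes ys$, while the right side reroutes the red string through the cusp before the cup. Both composites factor through the isomorphism $[s, ys] \cong [y^*s, s]$ which defines $\tau^\ell$, and hence agree.

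The main obstacle is the careful bookkeeping of the cusp isomorphisms from Proposition \ref{prop:duality} that govern how the red string turns the corner of the cap, together with verifying that the orientation of the half twist matches the orientation of the cup. Once this is handled, the equality collapses to a naturality square for the adjunction $y \dashv y^*$ combined with the identity $\epsilon^\ell \circ \eta^\ell = \id$ from the snake equation, so the heart of the argument is essentially formal after the explicit components are written out.
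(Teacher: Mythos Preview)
Your overall strategy---reduce to the component at $1_\cX$ and compute both sides using the explicit formulas \eqref{eq:eta_ell} and \eqref{eq:tau_l}---is the same as the paper's. However, several details in your outline are off and would need correction before the argument goes through.

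First, the reduction to the single component at $1_\cX$ is not a consequence of Proposition~\ref{prop:biadjoint}; it is because both sides are $(\cX,\cX)$-\emph{bimodule} natural transformations out of $\id_\cX$, and such transformations are determined by their value at $1_\cX$. The paper also makes a preliminary simplification you omit: it composes both sides with a half-twist so that the equation to be checked becomes ``front cup'' versus ``back cup plus two half-twists,'' which cleans up the subsequent computation considerably.

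Second, cusp isomorphisms from Proposition~\ref{prop:duality} do not enter at all. Both diagrams live entirely on a single tube $\Tr(-)$; the red string only moves between front and back of the \emph{same} sheet via $\tau^\ell$ and $\tau^r$, so there is no corner-turning through $C$ or $D$ to bookkeep.

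Third, the claim that the equality ``collapses to $\epsilon^\ell \circ \eta^\ell = \id$'' is incorrect: that composite is not a snake identity and is not the identity in general. What the computation actually collapses to is the snake equation for the \emph{module functor} $y$. Concretely, the paper computes the front-cup side as $\bigoplus_s \coev^y_s$, and the back-cup-plus-twists side as $\bigoplus_s (\id_y \otimes \ev^y_s \otimes \id_{y^*s})\circ(\coev^y_s \otimes \id_{yy^*s})\circ \coev^y_s$, which equals $\bigoplus_s \coev^y_s$ by the zig-zag for $y^*\dashv y$. So your instinct that the $y\dashv y^*$ adjunction is the heart of the matter is right, but the precise identity being used is the snake for $y$, not anything involving $\eta^\ell,\epsilon^\ell$.
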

\begin{proof}
This is equivalent to showing
\[
\tikzmath{
\draw (0,1) arc (180:225:.3 and .15) coordinate (a) arc (225:315:.3 and .15) coordinate (b);
\draw[red] (a) -- +(0,-.75) arc (-180:0:.21) -- (b);
\draw (0,0) arc (-180:0:.3) -- +(0,1) arc (0:540:.3 and .15) -- (0,0);
} =
\tikzmath{
\draw (0,1) arc (180:225:.3 and .15) coordinate (a) arc (225:315:.3 and .15) coordinate (b);
\draw[red] (a) to [out=270, in=45] (0,.35);
\draw[red,dashed] (0,.35) to [out=135,in=270] (.1,.25) arc (-180:0:.19) to [out=90,in=225] (.6,.35);
\draw[red] (b) to [out=270, in=135] (.6,.35);
\draw (0,0) arc (-180:0:.3) -- +(0,1) arc (0:540:.3 and .15) -- (0,0);
}\,.
\]
It is enough to show that their components at $1_\cX$ are equal, since both are bimodule natural transformations. Moreover, since $\eta_{1_\cX}^\ell$ is defined as a map into $\bigoplus s^\op\boxtimes s$ followed by the idempotent $e$, it is sufficient to compute this equality on maps into this larger object, and then project into the subobject $S$.

We will show the Yoneda embeddings of these components are equal, i.e., that the induced maps $[x,1_\cX] \to [x,\bigoplus [s,yy^*s]_\cX]$ are equal for all $x \in \cX$. Recall that by (\ref{eq:eta_ell}), the component $\eta^\ell_{1_\cX}:1_\cX \to \bigoplus [s,s]_\cX$ corestricts to the summand at $1_\cX$. Since we are precomposing with $\eta^\ell_{1_\cX}$, we only need to consider the case $x = 1_\cX$, as all the other induced maps will be zero. 

The map on the left is just
\[1_\cX \xrightarrow{\eta^\ell_{1_\cX}} \bigoplus [s,s]_\cX \xrightarrow{[\id,\coev]} \bigoplus [s,yy^*x]_\cX,\]
which is represented at $1_\cX$ by
\[ \id_{1_\cX} \mapsto \bigoplus_s \id_s \otimes \id_{1_\cX} \mapsto \bigoplus_s \coev^y_s \otimes \id_{1_\cX}.
\]

For the other map, we begin by calculating the components of the traciators, when mapped into by $1_\cX$. First, recall that the component at $s^\op\boxtimes t$ of $\tau^r$ is given by
\[
[s,yt]_\cX = \bigoplus_x [x\lhd s,yt]\otimes x \cong \bigoplus_x [y^*(x\lhd s),t]\otimes x \cong \bigoplus_x [x\lhd y^*s,t]\otimes x = [y^*s,t]_\cX.
\]
This acts on maps from $1_\cX$ into this object by sending elements of $[s,yt]$ to elements of $[y^*s,t]$ via the adjunction, that is, by composing with the evaluation of the $y^*,y$ adjunction. Since the map in the graphical image above is actually $(\tau^r)^{-1}$, it is given by composing with the coevaluation of the adjunction. Similarly, for $\tau^\ell$, since we are restricting to the component at $1_\cX$, we can again notice by (\ref{eq:tau_l}) that the map is given by composing with the coevaluation.

Finally, notice that the cup on the back of the tube is actually $\ev^\op$, since left and right duals are switched in $\cM^\op$. Thus the map on the right is represented at $1_\cX$ by
\begin{align*}
\id_{1_\cX} &\mapsto \sum_s \id_s \\
 &\mapsto \bigoplus_s (\ev^y_s)^\op \otimes \id_{1_\cX} \\
 &\mapsto \bigoplus (\id_{y}\otimes \ev^y_s\id_{y^*s})\circ(\coev^y_s\otimes\id_{yy^*s})\circ(\coev^y_s)\otimes \id_{1_\cX} \\ 
&= \bigoplus \coev^y_s\otimes\id_{1_\cX},
\end{align*}
where the last equality is due to the snake equation for $y$.
\end{proof}

We can compose the half traciators $\tau_x^\ell$ and $(\tau_x^r)^{-1}$, represented graphically by
\[
\tikzmath{
    \draw (0,0) arc (180:-180:2*\x and \x);
    \draw (0,0) -- +(0,3) arc (180:-180:2*\x and \x);
    \draw (1.2,0) -- +(0,3);
    \draw[red] (0.45,-0.3) .. controls (0.45,.3) .. (0,.9);
    \draw[red, dashed] (0,.9) -- (1.2,1.7);
    \draw[red] (1.2,1.7) .. controls (1.2,2.1) .. (0.75,2.7);
    \draw[blue] (0.6,-0.3) -- (0.6,2.7);
    }\,,
\]
to define a natural isomorphism $\Tr(x\otimes y) \to \Tr(y\otimes x)$ (where $x$ denotes the red string).

\begin{prop}\label{prop:traciator}
    The composition of $\tau_x^\ell$ and $\tau_x^r$ defines a traciator; that is, for $x,y \in \End({}_\cX\cM)$, the map $\tau_{x, y}:\Tr(x\otimes y) \to \Tr(y \otimes x)$ represented above satisfies the condition $\tau_{x\otimes y,z} = \tau_{x,y\otimes z}\circ \tau_{y,x\otimes z}$ as in \cite{HPT2015}.
\end{prop}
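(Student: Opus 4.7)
The plan is to verify the identity in the 3-dimensional graphical calculus of $\cA-\FusCat$ and then invoke the coherence results \cite{gray_coherence,gray_graphical,guthmann} to translate an isotopy of tube-diagrams into an equality of algebraic composites. First I would expand $\tau_{x,y}$ as the composite of $\tau^\ell_x$ and $(\tau^r_x)^{-1}$ that is spelled out graphically just before the statement, so that each side of the desired equation becomes a sequence of half-twists on the tube, interspersed with the cusps $C,D$ and with the unit/counit bubbles for the adjunctions $F\dashv F^*\dashv F$.

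Next I would record two monoidality properties of the half-traciators. From the componentwise formulas (\ref{eq:tau_l}) and (\ref{eq:tau_r}), together with the natural identification $(y_1\otimes y_2)^* \cong y_2^*\otimes y_1^*$ in the pivotal category $\End({}_\cX\cM)$, one checks directly that $\tau^\ell_{y_1\otimes y_2}$ factors as the composite $\tau^\ell_{y_2}\circ \tau^\ell_{y_1}$ (appropriately tensored), and similarly $\tau^r_{y_1\otimes y_2} = \tau^r_{y_1}\circ\tau^r_{y_2}$. Substituting these decompositions into the expansion of $\tau_{x\otimes y,z}$ reduces the traciator hexagon to showing that a single full twist of the combined string $x\otimes y$ around the tube agrees, in the graphical calculus, with the full twist of $x$ (leaving $y$ and $z$ in place on the tube) followed by the full twist of $y$ (leaving $x$ and $z$ in place), where throughout we are implicitly using the cyclic identifications $\Tr(abc)\cong\Tr(bca)$ to match source and target.

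The topological content of this reduced claim is a direct isotopy of tube diagrams, which becomes algebraically an instance of naturality of $\tau^\ell$ and $\tau^r$ in the $z$-position combined with Lemma \ref{lem:trace_cup} (to slide the caps and cups on the tube past the half-twists during the regrouping). I would assemble the argument as a single pasting diagram whose boundary is the desired equation, with each interior cell being either (a) a naturality square for $\tau^\ell$, $\tau^r$, or the interchanger, (b) the monoidality relations just described, or (c) an application of Lemma \ref{lem:trace_cup}.

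The main obstacle will be tracking the cusp isomorphisms $C$ and $D$ that are introduced when the full twist on $x\otimes y$ is separated into two successive full twists and then recombined, as well as the extra $\eta^r\eta^\ell$ bubbles that appear when the two half-twists are chained on the tube. I expect that every such cusp discrepancy will cancel by systematic application of Lemma \ref{lem:etasCscommute} and its dual, which precisely say that the cusp-composed $(C\circ C)\eta^r\eta^\ell$ combinations are the identity. After these cancellations and naturality moves the two sides reduce to the same tube diagram, proving the traciator axiom.
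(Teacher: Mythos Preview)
Your central observation---that the half-traciators are monoidal, i.e.\ $\tau^\ell_{y_1\otimes y_2}=\tau^\ell_{y_2}\circ\tau^\ell_{y_1}$ and $\tau^r_{y_1\otimes y_2}=\tau^r_{y_1}\circ\tau^r_{y_2}$, which you extract from the component formulas (\ref{eq:tau_l}) and (\ref{eq:tau_r})---is exactly the content of the paper's proof. The paper simply records these two identities and then writes ``the result follows.''

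Where you diverge is in everything after that step. You plan a large pasting diagram involving the cusps $C,D$, the unit/counit bubbles $\eta^\ell,\eta^r$, Lemma~\ref{lem:trace_cup}, and Lemma~\ref{lem:etasCscommute}. None of this is needed, and it stems from a misreading of the definition of $\tau_{x,y}$: the full traciator is \emph{only} the composite of $\tau^\ell_x$ (acting on the $F$ end of the tube) with $(\tau^r_x)^{-1}$ (acting on the $F^*$ end). There are no cusps or bubbles in this composite---those appear in the unit/counit of the adjunction $\Phi\dashv\Tr$ (Lemma~\ref{lem:phitradj}), not in the traciator.

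Once you have the monoidality of the halves, the traciator axiom is immediate: $\tau^\ell$ is a transformation whiskered on the left by $F^*$ and $\tau^r$ is whiskered on the right by $F$, so a $\tau^\ell$ applied to one strand and a $\tau^r$ applied to another commute by the interchange law in the underlying Gray category. Concretely,
\[
\tau_{x\otimes y,z}=(\tau^r_{x\otimes y})^{-1}\,\tau^\ell_{x\otimes y}
=(\tau^r_y)^{-1}(\tau^r_x)^{-1}\,\tau^\ell_y\,\tau^\ell_x
=(\tau^r_y)^{-1}\,\tau^\ell_y\,(\tau^r_x)^{-1}\,\tau^\ell_x,
\]
the last equality being pure interchange. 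This is the two-step traciator composite on the right-hand side. So your second paragraph already finishes the proof; the machinery of your third and fourth paragraphs can be dropped.
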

\begin{proof}
It is enough to show that $\tau^\ell_{x\otimes y} = \tau^\ell_y \circ \tau^\ell_{x}$ and $\tau^r_{x\otimes y} = \tau^r_x\circ\tau^r_y$. For $\tau^\ell$, this follows from the definition in (\ref{eq:tau_l}) and the fact that the isomorphism in the adjunction $[t,xys] \cong [(xy)^*t,s]$ is the same as the composite $[t,xys] \cong [x^*t,ys] \cong [y^*x^*t,s] \cong [(xy)^*t,s]$. Similarly, for $\tau^r$, this follows from the compatibility of the $\cX$-module structures of $x$ and $y$ with composition of $x$ and $y$ and the compatibility of composition with the adjunction data. The result follows.
\end{proof}

We now prove the claim above, that the traciator given by composing the two half-traciators is the same as the natural one arising from the adjunction $\Phi \dashv \Tr$.

\begin{prop}\label{prop:same_traciator}
The traciator in Proposition \ref{prop:traciator} is the same as the one given in \cite{HPT2015}.
\end{prop}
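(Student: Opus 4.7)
The plan is to model this proof on Proposition \ref{prop:mult}: both assertions say a graphically-defined natural transformation agrees with an abstract construction coming out of the adjunction $\Phi \dashv \Tr$, and in both cases the argument is a pasting diagram of naturality squares and snake identities. First, using Lemma \ref{lem:phitradj}, I would rewrite the unit and counit of $\Phi \dashv \Tr$ in terms of the cups, caps and cusp isomorphisms $C$ already present in our graphical calculus on tubes. This recasts the HPT2015 traciator (which is a priori defined abstractly as a mate under $\Phi \dashv \Tr$ using the module tensorator on $\Phi$) as a concrete string-diagram composite living in the same graphical setting as $\tau_x^\ell$ and $\tau_x^r$.

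Second, I would build the comparison pasting diagram whose top row is this unpacked form of the HPT2015 traciator and whose bottom row is the composite $\tau_y^\ell \circ (\tau_x^r)^{-1}$ pictured in the discussion preceding Proposition \ref{prop:traciator}. The cells will be naturality squares for $\eta^\ell,\eta^r,\epsilon^\ell,\epsilon^r$ and for the half-traciators (naturality in the strand object), snake bigons from the biadjunction $F\dashv F^* \dashv F$, applications of Lemma \ref{lem:etasCscommute} to collapse unit-counit pairs, and, crucially, Lemma \ref{lem:trace_cup}, which absorbs a half-twist into a cup at the end of a tube. Lemma \ref{lem:trace_cup} is the geometric identity that translates the half-twists $\tau^\ell,(\tau^r)^{-1}$ into the adjunction-mate form used in the HPT2015 definition, so it is the bridge between the two presentations.

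A useful reduction is Proposition \ref{prop:traciator}: the composite $\tau_y^\ell\circ(\tau_x^r)^{-1}$ is already known to satisfy the traciator cocycle $\tau_{x\otimes y,z}=\tau_{x,y\otimes z}\circ\tau_{y,x\otimes z}$, and the HPT2015 traciator is characterized by the same cocycle together with its value on generators. Consequently it is enough to check equality on a single simple strand (say with one of $x,y$ taken to be an identity), which isolates the computation to one tube with one string and sidesteps the two-strand interchanger entirely.

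The main obstacle I expect is identifying the mate of the HPT2015 traciator cleanly in the language of Lemma \ref{lem:phitradj}, and in particular keeping track of the orientation data for the half-twists as they pass through a cup via Lemma \ref{lem:trace_cup} (the back sheet of the tube carries $\cM^\op$, so the cup appearing there is $\ev^{\op}$, exactly as in the proof of Lemma \ref{lem:trace_cup}). Once that geometric identity is pinned down, the remainder of the pasting diagram is bookkeeping of the same flavor as the proof of Proposition \ref{prop:mult}, and the equality of the two traciators follows.
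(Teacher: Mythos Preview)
Your overall plan matches the paper's proof: unpack the HPT2015 traciator via Lemma~\ref{lem:phitradj} into a string-diagram composite, then build a pasting diagram whose outer paths are the two candidate traciators, with cells coming from naturality, snake identities, and Lemma~\ref{lem:trace_cup}. That is exactly what the paper does, and you have correctly identified Lemma~\ref{lem:trace_cup} as the key geometric bridge.

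Two points to flag. First, the paper's pasting diagram has one cell you do not list: Proposition~\ref{prop:mult} (the compatibility of the saddle multiplication $\mu$ with the mate under $\Phi\dashv\Tr$) appears as the ``blue triangle''. You mention Lemma~\ref{lem:etasCscommute}, which is the main input to Proposition~\ref{prop:mult}, but the cell you actually need is the multiplication statement itself, so expect to invoke it directly.

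Second, your proposed reduction via the cocycle is not sound and the paper does not use it. The identity $\tau_{x\otimes y,z}=\tau_{x,y\otimes z}\circ\tau_{y,x\otimes z}$ lets you build $\tau_{x,z}$ from $\tau_{g,-}$ for \emph{generators} $g$; it does not let you reduce to the case where one of $x,y$ is the identity (in that degenerate case both traciators are trivially the identity, which carries no information). There is no preferred generator here, so you cannot sidestep the two-strand computation. The paper keeps both strands and the interchanger $\varphi$ throughout; drop the reduction and carry out the full pasting diagram as in your second paragraph.
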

\begin{proof}
The traciator in \cite{HPT2015} is given as the mate of the map
\[ \Phi(\Tr(xy)) \xrightarrow{\coev_y} \Phi(\Tr(xy))yy^* \xrightarrow{\beta} y\Phi(\Tr(xy))y^* \xrightarrow{\epsilon_{\Phi\dashv\Tr}} yxyy^* \xrightarrow{p} yxy^{**}y^* \xrightarrow{\ev_{y^*}} yx. \]
In the two dimensional graphical calculus, this means the traciator is the composition
\[
\tikzmath{
\draw (0,0) arc (0:-180:.25) -- +(0,.25) arc (180:0:.25) -- (0,0);
\filldraw[blue] (0,0) circle (.05);
\filldraw[red] (0,.25) circle (.05);
}
\xrightarrow{\eta^\ell}
\tikzmath{
\draw (0,0) arc (0:-180:.25) -- +(0,.25) arc (180:0:.25) -- (0,0);
\draw (-.6,0.1) arc (0:360:.25);
\filldraw[blue] (0,0) circle (.05);
\filldraw[red] (0,.25) circle (.05);
}
\xrightarrow{\varphi}
\tikzmath{
\draw (0,0) arc (0:-180:.25) -- +(0,.25) arc (180:0:.25) -- (0,0);
\draw (-.6,-.2) arc (0:-180:.25) -- +(0,.65) arc (180:0:.25) -- (-.6,-.2);
\filldraw[blue] (0,0) circle (.05);
\filldraw[red] (0,.25) circle (.05);
}
\xrightarrow{\coev_y}
\tikzmath{
\draw (0,0) arc (0:-180:.25) -- +(0,.25) arc (180:0:.25) -- (0,0);
\draw (-.6,-.2) arc (0:-180:.25) -- +(0,1.05) arc (180:0:.25) -- (-.6,-.2);
\filldraw[blue] (0,0) circle (.05);
\filldraw[red] (0,.25) circle (.05);
\filldraw[red] (-.6,.6) circle (.05);
\filldraw[red] (-.6,.8) circle (.05);
}
\xrightarrow{\varphi}
\tikzmath{
\draw (0,0) arc (0:-180:.25) -- +(0,.25) arc (180:0:.25) -- (0,0);
\draw (-.6,-.4) arc (0:-180:.25) -- +(0,1.05) arc (180:0:.25) -- (-.6,-.4);
\filldraw[blue] (0,0) circle (.05);
\filldraw[red] (0,.25) circle (.05);
\filldraw[red] (-.6,.6) circle (.05);
\filldraw[red] (-.6,-.3) circle (.05);
}
\xrightarrow{C\circ\eta^r}
\tikzmath{
\draw (-.6,-.4) arc (0:-180:.25) -- +(0,1.05) arc (180:0:.25) -- (-.6,-.4);
\filldraw[blue] (-.6,0) circle (.05);
\filldraw[red] (-.6,.3) circle (.05);
\filldraw[red] (-.6,.6) circle (.05);
\filldraw[red] (-.6,-.3) circle (.05);
}
\xrightarrow{\ev_{y^*}\circ p_y}
\tikzmath{
\draw (0,0) arc (0:-180:.25) -- +(0,.25) arc (180:0:.25) -- (0,0);
\filldraw[red] (0,0) circle (.05);
\filldraw[blue] (0,.25) circle (.05);
}\,.
\]
To see that this is equal to the definition of the traciator above, consider the diagram below.
\[
\begin{tikzcd}
\tikzmath{
\draw (0,0) arc (0:-180:.25) -- +(0,.25) arc (180:0:.25) -- (0,0);
\draw (-.6,0.1) arc (0:360:.25);
\filldraw[blue] (0,0) circle (.05);
\filldraw[red] (0,.25) circle (.05);
}
\arrow[r, Rightarrow, "\varphi"]
\arrow[r, Rightarrow, "\varphi"]
&
\tikzmath{
\draw (0,0) arc (0:-180:.25) -- +(0,.25) arc (180:0:.25) -- (0,0);
\draw (-.6,-.2) arc (0:-180:.25) -- +(0,.65) arc (180:0:.25) -- (-.6,-.2);
\filldraw[blue] (0,0) circle (.05);
\filldraw[red] (0,.25) circle (.05);
}
\arrow[d, Rightarrow, "\varphi"]
\arrow[r, Rightarrow, "\coev_y"]
&
\tikzmath{
\draw (0,0) arc (0:-180:.25) -- +(0,.25) arc (180:0:.25) -- (0,0);
\draw (-.6,-.2) arc (0:-180:.25) -- +(0,1.05) arc (180:0:.25) -- (-.6,-.2);
\filldraw[blue] (0,0) circle (.05);
\filldraw[red] (0,.25) circle (.05);
\filldraw[red] (-.6,.6) circle (.05);
\filldraw[red] (-.6,.8) circle (.05);
}
\arrow[r, Rightarrow, "\varphi"]
\arrow[d, Rightarrow, "\varphi"]
&
\tikzmath{
\draw (0,0) arc (0:-180:.25) -- +(0,.25) arc (180:0:.25) -- (0,0);
\draw (-.6,-.4) arc (0:-180:.25) -- +(0,1.05) arc (180:0:.25) -- (-.6,-.4);
\filldraw[blue] (0,0) circle (.05);
\filldraw[red] (0,.25) circle (.05);
\filldraw[red] (-.6,.6) circle (.05);
\filldraw[red] (-.6,-.3) circle (.05);
}
\arrow[r, Rightarrow, "C\circ \eta^r"]
\arrow[d, Rightarrow, "\tau^\ell"]
\arrow[dl, Rightarrow, "\varphi"]
&
\tikzmath{
\draw (-.6,-.4) arc (0:-180:.25) -- +(0,1.05) arc (180:0:.25) -- (-.6,-.4);
\filldraw[blue] (-.6,0) circle (.05);
\filldraw[red] (-.6,.3) circle (.05);
\filldraw[red] (-.6,.6) circle (.05);
\filldraw[red] (-.6,-.3) circle (.05);
}
\arrow[ddr, Rightarrow, "\ev_{y^*}\circ p"]
\arrow[dd, Rightarrow, "\tau^\ell"]
\\
\tikzmath{
\draw (0,0) arc (0:-180:.25) -- +(0,.25) arc (180:0:.25) -- (0,0);
\filldraw[blue] (0,0) circle (.05);
\filldraw[red] (0,.25) circle (.05);
}
\arrow[u, Rightarrow, "\eta^\ell"]
\arrow[r, Rightarrow, "\eta^\ell"]
\arrow[dd, Rightarrow, "\tau^r"]
&
\tikzmath{
\draw (0,0) arc (0:-180:.25) -- +(0,.25) arc (180:0:.25) -- (0,0);
\draw (0,1) arc (0:360:.25);
\filldraw[blue] (0,0) circle (.05);
\filldraw[red] (0,.25) circle (.05);
}
\arrow[red,rd, Rightarrow, "\ev^\op"]
\arrow[dd, Rightarrow, "\tau^r"]
\arrow[red,r, Rightarrow, "\coev_y"]
&
\tikzmath{
\draw (0,0) arc (0:-180:.25) -- +(0,.25) arc (180:0:.25) -- (0,0);
\draw (0,1) arc (0:-180:.25) -- +(0,.25) arc (180:0:.25) -- (0,1);
\filldraw[blue] (0,0) circle (.05);
\filldraw[red] (0,.25) circle (.05);
\filldraw[red] (0,1) circle (.05);
\filldraw[red] (0,1.25) circle (.05);
}
\arrow[red,dr, Rightarrow, "\tau^\ell"]
&
\tikzmath{
\draw (0,0) arc (0:-180:.25) -- +(0,.25) arc (180:0:.25) -- (0,0);
\draw (-.6,-.4) arc (0:-180:.25) -- +(0,1.05) arc (180:0:.25) -- (-.6,-.4);
\filldraw[blue] (0,0) circle (.05);
\filldraw[red] (0,.25) circle (.05);
\filldraw[red] (-.6,.6) circle (.05);
\filldraw[red] (-1.1,.6) circle (.05);
}
\arrow[blue,dr, Rightarrow, "C\circ \eta^r"]
\arrow[blue,d, Rightarrow, "\varphi"]
\\
&
&
\tikzmath{
\draw (0,0) arc (0:-180:.25) -- +(0,.25) arc (180:0:.25) -- (0,0);
\draw (0,1) arc (0:-180:.25) -- +(0,.25) arc (180:0:.25) -- (0,1);
\filldraw[blue] (0,0) circle (.05);
\filldraw[red] (0,.25) circle (.05);
\filldraw[red] (-.5,1) circle (.05);
\filldraw[red] (-.5,1.25) circle (.05);
}
\arrow[d, Rightarrow, "\tau^r"]
\arrow[red,r, Rightarrow, "\tau^r"]
\arrow[dr, Rightarrow, "\epsilon^\ell"]
&
\tikzmath{
\draw (0,0) arc (0:-180:.25) -- +(0,.25) arc (180:0:.25) -- (0,0);
\draw (0,1) arc (0:-180:.25) arc (180:0:.25);
\filldraw[blue] (0,0) circle (.05);
\filldraw[red] (0,.25) circle (.05);
\filldraw[red] (0,1) circle (.05);
\filldraw[red] (-.5,1) circle (.05);
}
\arrow[blue,r, Rightarrow, "\epsilon^\ell"]
&
\tikzmath{
\draw (0,0) arc (0:-180:.25) -- +(0,.25) -- +(0,.75) arc (180:0:.25) -- (0,0);
\filldraw[blue] (0,0) circle (.05);
\filldraw[red] (-.5,.5) circle (.05);
\filldraw[red] (0,.25) circle (.05);
\filldraw[red] (0,.75) circle (.05);
}
\arrow[red,ddr, Rightarrow, "\ev_{y^*}\circ p"']
&
\tikzmath{
\draw (0,0) arc (0:-180:.25) -- +(0,.25) arc (180:0:.25) -- (0,0);
\filldraw[red] (0,0) circle (.05);
\filldraw[blue] (0,.25) circle (.05);
}
\\
\tikzmath{
\draw (0,0) arc (0:360:.25);
\filldraw[blue] (0,0) circle (.05);
\filldraw[red] (-.5,0) circle (.05);
}
\arrow[r, Rightarrow, "\eta^\ell"]
\arrow[ddrrr, equals]
&
\tikzmath{
\draw (0,0) arc (0:360:.25);
\draw (0,.75) arc (0:360:.25);
\filldraw[blue] (0,0) circle (.05);
\filldraw[red] (-.5,0) circle (.05);
}
\arrow[r, Rightarrow, "\ev^\op"]
\arrow[ddrr, Rightarrow, "\epsilon^\ell"]
&
\tikzmath{
\draw (0,0) arc (0:360:.25);
\draw (0,.75) arc (0:-180:.25) -- +(0,.25) arc (180:0:.25) -- (0,.75);
\filldraw[red] (-.5,.75) circle (.05);
\filldraw[red] (-.5,1) circle (.05);
\filldraw[blue] (0,0) circle (.05);
\filldraw[red] (-.5,0) circle (.05);
}
\arrow[dr, Rightarrow, "\epsilon^\ell"]
&
\tikzmath{
\draw (0,0) arc (0:-180:.25) -- +(0,.25) -- +(0,.75) arc (180:0:.25) -- (0,0);
\filldraw[blue] (0,0) circle (.05);
\filldraw[red] (0,.25) circle (.05);
\filldraw[red] (-.5,.5) circle (.05);
\filldraw[red] (-.5,.75) circle (.05);
}
\arrow[red,d, Rightarrow, "\tau^r"]
\arrow[red,ur, Rightarrow, "\tau^r"]
\\
&&&
\tikzmath{
\draw (0,0) arc (0:-180:.25) -- +(0,.25) -- +(0,.75) arc (180:0:.25) -- (0,0);
\filldraw[blue] (0,0) circle (.05);
\filldraw[red] (-.5,.25) circle (.05);
\filldraw[red] (-.5,.5) circle (.05);
\filldraw[red] (-.5,.75) circle (.05);
}
\arrow[red,rr, Rightarrow, "\coev^\op"]
&&
\tikzmath{
\draw (0,0) arc (0:360:.25);
\filldraw[blue] (0,0) circle (.05);
\filldraw[red] (-.5,0) circle (.05);
}
\arrow[dll, equals]
\arrow[uu, Rightarrow, "\tau^\ell"]
\\
&&&
\tikzmath{
\draw (0,0) arc (0:360:.25);
\filldraw[blue] (0,0) circle (.05);
\filldraw[red] (-.5,0) circle (.05);
}
\arrow[u, Rightarrow, "\ev^\op"]
\end{tikzcd}
\]
(Here, $\varphi$ represents the interchanger). Most of the faces in this diagram are naturality squares or snake equations. The red squares are both applications of Lemma \ref{lem:trace_cup}, and the blue triangle is an application of Proposition \ref{prop:mult}. Therefore, the two paths around the outside of the diagram are equal. One is the traciator defined in \cite{HPT2015}, while the other is $\tau_\ell\circ\tau_r$, as defined above.
\end{proof}

As a result of Proposition \ref{prop:same_traciator}, we obtain coherence results for the traciators and the braiding and twist isomorphisms, as in \cite{HPT2015}. However, we provide alternate proofs below using the graphical calculus within the 3-category. This demonstrates the usefulness of the 3-categorical approach.

First, we show compatibility with the braiding. Graphically, the compatibility with the braiding is given by the equation

\begin{equation}\label{eq:braid_axiom}
\tikzmath{
\draw (0,0) arc (-180:0:2*\x and \x) arc (180:0:3*\x) arc (-180:0:2*\x and \x) arc (0:60:6*\x) arc (240:180:{4*\x})
arc (0:540:2*\x and \x) 
arc (0:-60:4*\x) arc (120:180:6*\x);
\draw[dashed] (0,0) arc (180:0:2*\x and \x);
\draw[dashed] (10*\x,0) arc (180:0:2*\x and \x);
\draw[red] (2*\x,-\x) arc (180:85:2*\x);
\draw[red, dashed] ({4*\x+2*\x*cos(85)},{-\x+2*\x*sin(95)}) .. controls (1,1) .. (3,1.8);
\draw[red] (3,1.8) -- +(-.2,0) arc (-90:-180:.5);
\draw[blue] (12*\x,-\x) arc (0:60:6*\x) -- +(-{cos(30)*.4},{.5*.4}) arc (240:180:{3*\x}) -- +(0,.1);
} =
\tikzmath{
\draw (0,0) arc (60:-60:3*\x) arc (120:180:9*\x) arc (-180:0:2*\x and \x) arc (180:120:6*\x) arc (-60:60:6*\x) arc (240:180:\x) -- +(0,.2) arc (0:540:2*\x and \x) -- +(0,-.2) arc (0:-60:\x);
\draw[red] (-2.5*\x,-{(sqrt(3)*3*\x + sqrt(3)*9*\x/2)}-\x) arc (180:120:9*\x) arc (-60:60:4*\x) arc (240:180:2.9*\x);
\draw[fill=white] (0,0) arc (120:240:3*\x) arc (60:0:9*\x) arc (0:-180:2*\x and \x) arc (0:60:6*\x) arc (240:120:6*\x);
\draw[blue] (2.5*\x,-{(sqrt(3)*3*\x + sqrt(3)*9*\x/2)}-\x) arc (0:60:9*\x) arc (240:120:4*\x) arc (-60:0:2.9*\x);
}.
\end{equation}

\begin{prop}\label{prop:braid_axiom}
The half-traciators $\tau_\ell$ and $\tau_r$ satisfy (\ref{eq:braid_axiom}).
\end{prop}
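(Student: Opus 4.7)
The plan is to verify (\ref{eq:braid_axiom}) component-wise, reducing to a direct calculation on generalized elements. Both sides of the equation are bimodule natural transformations between the same pair of functors $\End({}_\cX\cM)^{\boxtimes 2} \to \cZ^\cA(\cX)$, so by naturality it suffices to check equality of the components at $1_\cX \in \cX$. Further, as in the proof of Lemma \ref{lem:trace_cup}, I would Yoneda-embed and observe via (\ref{eq:eta_ell}) that $\eta^\ell_{1_\cX}$ corestricts to the $1_\cX$-summand of $\bigoplus_{x}[s,s]\otimes x$, so it is enough to compare the two induced maps on $[1_\cX,1_\cX]$, evaluated on $\id_{1_\cX}$.

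First, I would factor each half-twist using Proposition \ref{prop:traciator}: on the LHS, the combined half-twist applied after the pair-of-pants $\mu_{x,y}$ factors as $\tau^\ell_y \circ \tau^\ell_x$ followed by $(\tau^r_x)^{-1}\circ(\tau^r_y)^{-1}$, applied to the merged strand $xy$. The RHS, once the genus-1 picture is resolved into elementary pieces, factors as a pair of separate half-twists applied to $x$ and $y$ on their own tubes, followed by $\mu_{x,y}$. Thus the claim reduces to the assertion that the half-twists commute past the multiplication, i.e.\ that $\mu_{x,y} \circ (\tau^\ell_x \boxtimes \tau^\ell_y) = \tau^\ell_{xy} \circ \mu_{x,y}$ and similarly for $\tau^r$.

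Second, I would prove each of these commutation statements by unpacking the explicit formulas in (\ref{eq:tau_l}), (\ref{eq:tau_r}), together with the expression for $\mu_{x,y}$ coming from $\epsilon^\ell$ (Proposition \ref{prop:mult}). In each case the half-twist is defined by composing with the coevaluation of the $y\dashv y^*$ adjunction (as identified inside the proof of Lemma \ref{lem:trace_cup}), and the pair-of-pants $\epsilon^\ell$ acts by the evaluation in (\ref{eq:epsilon_ell}), so the required equality becomes a statement about compatibility between adjunction data and the $\cA$-balancing isomorphism $[m^\op\lhd a,n]_\cX \cong [m,a\rhd n]_\cX$.

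Third, after these reductions the identity follows from an $I=H$ move in the multifusion category $\begin{bmatrix}\cX & \cM^\op \\ \cM & \cA\end{bmatrix}$, as used in Proposition \ref{prop:biadjoint}, combined with the snake equations for the $y\dashv y^*$ adjunction in $\End({}_\cX\cM)$ and for $F \dashv F^*$. The main obstacle will be the bookkeeping for the $\cA$-balancing: the ``green/purple'' trivalent vertices of $S$ must be carried consistently across both sides so that the two orders of operations (twist then merge vs.\ merge then twist) produce the same sum of dual-basis contractions. Once the basis sums are reorganized via the $I=H$ relation, both sides collapse to the same expression $\bigoplus_s \coev^y_s\otimes \id_{1_\cX}$ composed with the appropriate multiplication structure, completing the verification.
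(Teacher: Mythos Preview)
Your proposal misreads equation (\ref{eq:braid_axiom}). The right-hand side is not ``a pair of separate half-twists applied to $x$ and $y$ on their own tubes followed by $\mu_{x,y}$''; it is the \emph{braiding} of the two tubes in $\cZ^\cA(\cX)$ followed by the pair-of-pants. In the string-diagram calculus this braiding is realized by a composite of interchangers $\phi$, and no half-traciators appear on the RHS at all. Likewise, on the left-hand side the half-twist is not applied after the merged pair-of-pants to the strand $xy$; it is interleaved with the multiplication: in the paper's decomposition the LHS is $\tau_r$, then $\epsilon$, then an interchanger $\phi$, then $\tau_r^{-1}$. Consequently your reduction to ``$\mu_{x,y}\circ(\tau^\ell_x\boxtimes\tau^\ell_y)=\tau^\ell_{xy}\circ\mu_{x,y}$'' is not the content of (\ref{eq:braid_axiom}); that identity contains no braiding and, even if true, would not establish the proposition. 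The target expression $\bigoplus_s \coev^y_s\otimes\id_{1_\cX}$ you aim for is the one relevant to Lemma \ref{lem:trace_cup}, not to this statement.

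The paper's argument avoids component-level computations entirely. It translates both sides into the 2D string-diagram calculus (LHS: $\tau_r,\epsilon,\phi,\tau_r^{-1}$; RHS: $\phi,\phi,\epsilon$) and then exhibits a small pasting diagram whose cells are naturality squares for $\tau_r$, $\phi$, and $\epsilon$, together with one triangle and one square that invoke the ``third multiplication'' $\mu$ of Remark \ref{rem:thirdmult}. The key observation you are missing is that the braiding in $\End_{\cA-\FusCat}(\cX)$ is precisely the interchanger, so compatibility with the braiding becomes a statement about commuting $\tau_r$ past $\phi$ and $\epsilon$, not about commuting half-twists past $\mu$. If you want to salvage a component-wise approach you would first need the correct factorizations of the two sides; but at that point the pasting-diagram proof is both shorter and more transparent.
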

\begin{proof}
We utilize the graphical calculus of string diagrams. The map on the left is given by
\[
\tikzmath{
\draw (0,0) arc (0:360:.25);
\draw (0,.75) arc (0:360:.25);
\filldraw[red] (0,0) circle (.05);
\filldraw[blue] (0,.75) circle (.05);
}
\ \xRightarrow{\tau_r}
\tikzmath{
\draw (0,0) arc (0:360:.25);
\draw (0,.75) arc (0:360:.25);
\filldraw[red] (-.5,0) circle (.05);
\filldraw[blue] (0,.75) circle (.05);
}
\ \xRightarrow{\epsilon}\ 
\tikzmath{
\draw (0,0) arc (0:-180:.25) -- +(0,.25) arc (180:0:.25) -- (0,0);
\filldraw[blue] (0,.25) circle (.05);
\filldraw[red] (-.5,0) circle (.05);
}
\ \xRightarrow{\phi}\ 
\tikzmath{
\draw (0,0) arc (0:-180:.25) -- +(0,.25) arc (180:0:.25) -- (0,0);
\filldraw[blue] (0,0) circle (.05);
\filldraw[red] (-.5,.25) circle (.05);
}
\ \xRightarrow{\tau_r^{-1}}\ 
\tikzmath{
\draw (0,0) arc (0:-180:.25) -- +(0,.25) arc (180:0:.25) -- (0,0);
\filldraw[blue] (0,0) circle (.05);
\filldraw[red] (0,.25) circle (.05);
}\,,
\]
while the map on the right is
\[
\tikzmath{
\draw (0,0) arc (0:360:.25);
\draw (0,.75) arc (0:360:.25);
\filldraw[red] (0,0) circle (.05);
\filldraw[blue] (0,.75) circle (.05);
}
\ \xRightarrow{\phi}\ 
\tikzmath{
\draw (0,0) arc (0:360:.25);
\draw (-.75,0) arc (0:360:.25);
\filldraw[blue] (0,0) circle (.05);
\filldraw[red] (-.75,0) circle (.05);
}
\ \xRightarrow{\phi}\ 
\tikzmath{
\draw (0,0) arc (0:360:.25);
\draw (0,.75) arc (0:360:.25);
\filldraw[blue] (0,0) circle (.05);
\filldraw[red] (0,.75) circle (.05);
}
\ \xRightarrow{\epsilon}\ 
\tikzmath{
\draw (0,0) arc (0:-180:.25) -- +(0,.25) arc (180:0:.25) -- (0,0);
\filldraw[blue] (0,0) circle (.05);
\filldraw[red] (0,.25) circle (.05);
}\,.
\]
To verify these are equal, consider the following pasting diagram:
\[
\begin{tikzcd}
\tikzmath{
\draw (0,0) arc (0:360:.25);
\draw (0,.75) arc (0:360:.25);
\filldraw[red] (0,0) circle (.05);
\filldraw[blue] (0,.75) circle (.05);
}
\arrow[r, Rightarrow, "\phi"]
\arrow[d, Rightarrow, "\tau_r"]
&
\tikzmath{
\draw (0,0) arc (0:360:.25);
\draw (-.75,0) arc (0:360:.25);
\filldraw[blue] (0,0) circle (.05);
\filldraw[red] (-.75,0) circle (.05);
}
\arrow[r, Rightarrow, "\phi"]
\arrow[d, Rightarrow, "\tau_r"]
&
\tikzmath{
\draw (0,0) arc (0:360:.25);
\draw (0,.75) arc (0:360:.25);
\filldraw[blue] (0,0) circle (.05);
\filldraw[red] (0,.75) circle (.05);
}
\arrow[r, Rightarrow, "\epsilon"]
\arrow[d, Rightarrow, "\tau_r"]
&
\tikzmath{
\draw (0,0) arc (0:-180:.25) -- +(0,.25) arc (180:0:.25) -- (0,0);
\filldraw[blue] (0,0) circle (.05);
\filldraw[red] (0,.25) circle (.05);
}
\arrow[d, Rightarrow, "\tau_r"]
\arrow[dr, Rightarrow, equal]
\\
\tikzmath{
\draw (0,0) arc (0:360:.25);
\draw (0,.75) arc (0:360:.25);
\filldraw[red] (-.5,0) circle (.05);
\filldraw[blue] (0,.75) circle (.05);
}
\arrow[r, Rightarrow, "\phi"]
\arrow[dr, Rightarrow, "\epsilon"]
&
\tikzmath{
\draw (0,0) arc (0:360:.25);
\draw (-.75,0) arc (0:360:.25);
\filldraw[blue] (0,0) circle (.05);
\filldraw[red] (-1.25,0) circle (.05);
}
\arrow[d, Rightarrow, "\mu"]
\arrow[r, Rightarrow, "\phi"]
&
\tikzmath{
\draw (0,0) arc (0:360:.25);
\draw (0,.75) arc (0:360:.25);
\filldraw[blue] (0,0) circle (.05);
\filldraw[red] (-.5,.75) circle (.05);
}
\arrow[d, Rightarrow, "\epsilon"]
\arrow[r, Rightarrow, "\epsilon"]
&
\tikzmath{
\draw (0,0) arc (0:-180:.25) -- +(0,.25) arc (180:0:.25) -- (0,0);
\filldraw[blue] (0,0) circle (.05);
\filldraw[red] (-.5,.25) circle (.05);
}
\arrow[r, Rightarrow, "\tau_r^{-1}"]
&
\tikzmath{
\draw (0,0) arc (0:-180:.25) -- +(0,.25) arc (180:0:.25) -- (0,0);
\filldraw[blue] (0,0) circle (.05);
\filldraw[red] (0,.25) circle (.05);
}
\\
& \tikzmath{
\draw (0,0) arc (0:-180:.25) -- +(0,.25) arc (180:0:.25) -- (0,0);
\filldraw[blue] (0,.25) circle (.05);
\filldraw[red] (-.5,0) circle (.05);
}
\arrow[r, Rightarrow, "\phi"]
&
\tikzmath{
\draw (0,0) arc (0:-180:.25) -- +(0,.25) arc (180:0:.25) -- (0,0);
\filldraw[blue] (0,0) circle (.05);
\filldraw[red] (-.5,.25) circle (.05);
}
\arrow[ur, equal]
\end{tikzcd}
\]
Here the map labeled $\mu$ is the multiplication mentioned in Remark \ref{rem:thirdmult}; the triangle and square containing $\mu$ commute by the remark. The other faces are naturality squares or trivial, so this completes the proof.
\end{proof}

In addition, there is a compatibility between the traciator and the twist, given by

\begin{equation}\label{eq:axiom1}
    \tikzmath{
    \draw (0,0) arc (180:-180:2*\x and \x);
    \draw (0,0) -- +(0,3) arc (180:-180:2*\x and \x);
    \draw (1.2,0) -- +(0,3);
    \draw[red] (0.6,-0.3) .. controls (0.6,.3) .. (0,.9);
    \draw[red, dashed] (0,.9) -- (1.2,1.7);
    \draw[red] (1.2,1.7) .. controls (1.2,2.1) .. (0.6,2.7);
    } =
    \theta_{\Tr(y)},
\end{equation}
where $\theta$ is the natural balancing arising from the pivotality and braiding, and $y$ is the red string.

\begin{prop}\label{prop:axiom1}
    The traciator given in Proposition \ref{prop:traciator} satisfies Equation \ref{eq:axiom1}.
\end{prop}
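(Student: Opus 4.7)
The plan mirrors the pasting-diagram strategy of Propositions~\ref{prop:braid_axiom} and \ref{prop:same_traciator}. Although the identity is also a formal consequence of Proposition~\ref{prop:same_traciator} combined with the corresponding statement in \cite{HPT2015}, I would give a direct graphical proof in the style of the previous sections, since that is the stated purpose of reproving the coherence results here.

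First, I would unfold the left-hand side of (\ref{eq:axiom1}) as the self-map $\tau^r_y\circ\tau^\ell_y$ of $\Tr(y)$. Using (\ref{eq:tau_l}), (\ref{eq:tau_r}), and (\ref{eq:eta_ell})--(\ref{eq:epsilon_r}), a direct calculation shows that on each summand $[x\rhd s,y s]\otimes x$ of $\Tr(y)$ this composite reduces to post-composition with the pivotal double-dual $p_y : y\to y^{**}$: a generalized element $\varphi \in [x\rhd s,y s]$ is transported through the $y\dashv y^*$ adjunction to an element of $[y^*(x\rhd s),s]$ and then back into $[x\rhd s,y s]$ via the module-pivotal isomorphism $y\cong y^{**}$. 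Likewise, since $\Tr(y)$ inherits a half-braiding from the $\cA$-centered structure on $\cM$, unpacking the ribbon formula $\theta_Z = (\ev_Z\otimes\id_Z)(\id_{Z^*}\otimes\beta_{Z,Z})(\coev_Z\otimes\id_Z)$ for $Z = \Tr(y)$ realizes $\theta_{\Tr(y)}$ on the same summand as post-composition with $p_y$ as well, so the two natural transformations agree componentwise.

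Second, I would upgrade this componentwise agreement to a 2-morphism equality by constructing a pasting diagram of the same flavor as the one in Proposition~\ref{prop:same_traciator}, whose cells are naturality squares, snake equations for the adjunctions $F\dashv F^*$ and $y\dashv y^*$, an application of Lemma~\ref{lem:trace_cup} (to slide the strand across the cup on the back of the tube), and the pivotal module coherences. The main obstacle will be the coherence statement that $\Tr$ intertwines the pivotal double-dual on $\End^\cA({}_\cX\cM)$ with the pivotal double-dual on $\cZ^\cA(\cX)$; this is the essential input that turns the componentwise calculation into a coherent identity of 2-morphisms. Once this is recorded as a preliminary lemma, the remaining pasting diagram is formal and built entirely from ingredients already used in Propositions~\ref{prop:braid_axiom} and \ref{prop:same_traciator}.
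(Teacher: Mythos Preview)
Your proposal has the right shape but misses the key ingredient the paper actually uses, and the two-part structure you describe is somewhat redundant.

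The paper's proof is a single pasting diagram: it writes the twist $\theta_{\Tr(y)}$ via the standard ribbon formula $(\ev\otimes p)\circ(\id\otimes\beta)\circ(\id\otimes\coev)$, decomposes the duality data of $\Tr(y)=F^*\circ(\cM^\op\boxtimes y)\circ F$ into the adjunction data of its three constituents, and then collapses the diagram. The cells are naturality squares, snake equations, several applications of Lemma~\ref{lem:trace_cup} (to pass cups and caps across the tube), and---crucially---one application of Proposition~\ref{prop:braid_axiom}, which replaces the braiding $\beta$ by a pair of half-traciators. You never invoke Proposition~\ref{prop:braid_axiom}, and without it there is no way to eliminate the braiding from the ribbon formula; this is the step that actually links $\theta$ to $\tau^\ell,\tau^r$.

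Your proposed substitute---a preliminary lemma that $\Tr$ intertwines the pivotal double-duals---is close to a restatement of the conclusion rather than an independent input: saying $\theta_{\Tr(y)}=\Tr(p_y)$ and saying the self-traciator equals $\Tr(p_y)$ are both nontrivial, and you have not indicated how you would prove the first without essentially reproducing the pasting argument. Similarly, the componentwise calculation in your first paragraph, if carried through, would already prove the proposition (two natural transformations that agree on every component are equal), so the second ``upgrade'' step is unnecessary; conversely, if the pasting diagram is the real argument, the componentwise sketch adds nothing. Finally, note that the self-traciator in (\ref{eq:axiom1}) is $(\tau^r_y)^{-1}\circ\tau^\ell_y$, not $\tau^r_y\circ\tau^\ell_y$ as you wrote.
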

\begin{proof}
Recall that the balancing arising from a pivotal braided category is given by $(\ev_x\otimes p_{x^{**}})\circ(\id_x\otimes\beta)\circ(\id_x\otimes \coev_{x^*})$, where $p$ is the pivotality isomorphism $x^{**}\cong x$. As $\Tr(y)$ is a composite of three functors, its evaluation and coevaluation maps are given by composites of the evaluation and coevaluations for its parts. In the following diagram, this composite goes around the top and right edges.

\begin{center}
\begin{tikzcd}
\tikzmath{
\draw (0,0) arc (0:360:.25);
\filldraw[red] (0,0) circle (.05);
}
\arrow[r, Rightarrow, "\eta^\ell"]
\arrow[d, equals]
&
\tikzmath{
\draw (0,0) arc (0:360:.25);
\draw (0,.75) arc (0:360:.25);
\filldraw[red] (0,0) circle (.05);
}
\arrow[dl, Rightarrow, "\epsilon^\ell"]
\arrow[r, Rightarrow, "\coev"]
&
\tikzmath{
\draw (0,0) arc (0:360:.25);
\draw (0,.75) -- +(0,.25) arc (0:180:.25) -- +(0,-.25) arc (-180:0:.25);
\filldraw[red] (0,0) circle (.05);
\filldraw[red] (0,.75) circle (.05);
\filldraw[red] (0,1) circle (.05);
}
\arrow[r,blue, Rightarrow, "\eta^r"]
\arrow[dl, Rightarrow, "\epsilon^\ell"]
\arrow[d,blue, Rightarrow, "\tau^r"]
&
\tikzmath{
\draw (0,0) arc (0:360:.25);
\draw (0,.75) arc (0:360:.25);
\draw (0,1.5) arc (0:360:.25);
\filldraw[red] (0,0) circle (.05);
\filldraw[red] (0,.75) circle (.05);
\filldraw[red] (0,1.5) circle (.05);
}
\arrow[r,blue, Rightarrow, "\beta"]
&
\tikzmath{
\draw (0,0) arc (0:360:.25);
\draw (0,.75) arc (0:360:.25);
\draw (0,1.5) arc (0:360:.25);
\filldraw[red] (0,0) circle (.05);
\filldraw[red] (0,.75) circle (.05);
\filldraw[red] (0,1.5) circle (.05);
}
\arrow[r,blue, Rightarrow, "p"]
&
\tikzmath{
\draw (0,0) arc (0:360:.25);
\draw (0,.75) arc (0:360:.25);
\draw (0,1.5) arc (0:360:.25);
\filldraw[red] (0,0) circle (.05);
\filldraw[red] (0,.75) circle (.05);
\filldraw[red] (0,1.5) circle (.05);
}
\arrow[d, Rightarrow, "\epsilon^\ell"]
\\
\tikzmath{
\draw (0,0) arc (0:360:.25);
\filldraw[red] (0,0) circle (.05);
}
\arrow[r,red, Rightarrow, "\coev"]
\arrow[dd, Rightarrow, "\tau^\ell"]
\arrow[dr,red, Rightarrow, "\ev^\op"]
&
\tikzmath{
\draw (0,0) -- +(0,.5) arc (0:180:.25) -- +(0,-.5) arc (-180:0:.25);
\filldraw[red] (0,0) circle (.05);
\filldraw[red] (0,.25) circle (.05);
\filldraw[red] (0,.5) circle (.05);
}
\arrow[dr,red, Rightarrow, "\tau^r"]
&
\tikzmath{
\draw (0,0) arc (0:360:.25);
\draw (0,.75) arc (0:360:.25);
\filldraw[red] (0,0) circle (.05);
\filldraw[red] (0,.75) circle (.05);
\filldraw[red] (-.5,.75) circle (.05);
}
\arrow[r,blue, Rightarrow, "\eta^r"]
\arrow[d, Rightarrow, "\epsilon^\ell"]
&
\tikzmath{
\draw (0,0) arc (0:360:.25);
\draw (0,.75) arc (0:360:.25);
\draw (0,1.5) arc (0:360:.25);
\filldraw[red] (0,0) circle (.05);
\filldraw[red] (-.5,.75) circle (.05);
\filldraw[red] (0,1.5) circle (.05);
}
\arrow[urr,blue, Rightarrow, "(\tau^r)^{-1}"]
\arrow[r, Rightarrow, "\epsilon^\ell"]
&
\tikzmath{
\draw (0,0) arc (0:360:.25);
\draw (0,.75) arc (0:360:.25);
\filldraw[red] (0,0) circle (.05);
\filldraw[red] (0,.75) circle (.05);
\filldraw[red] (-.5,0) circle (.05);
}
\arrow[r,red, Rightarrow, "(\tau^r)^{-1}"]
\arrow[d,red, Rightarrow, "\tau^\ell"]
&
\tikzmath[yscale=-1]{
\draw (0,0) arc (0:360:.25);
\draw (0,.75) -- +(0,.25) arc (0:180:.25) -- +(0,-.25) arc (-180:0:.25);
\filldraw[red] (0,0) circle (.05);
\filldraw[red] (0,.75) circle (.05);
\filldraw[red] (0,1) circle (.05);
}
\arrow[d,red, Rightarrow, "\ev"]
\\
&
\tikzmath{
\draw (0,0) -- +(0,.5) arc (0:180:.25) -- +(0,-.5) arc (-180:0:.25);
\filldraw[red] (0,0) circle (.05);
\filldraw[red] (-.5,.25) circle (.05);
\filldraw[red] (-.5,.5) circle (.05);
}
\arrow[r,red, Rightarrow, "\tau^r"]
\arrow[d, Rightarrow, "\tau^\ell"]
&
\tikzmath{
\draw (0,0) -- +(0,.5) arc (0:180:.25) -- +(0,-.5) arc (-180:0:.25);
\filldraw[red] (0,0) circle (.05);
\filldraw[red] (-.5,.25) circle (.05);
\filldraw[red] (0,.5) circle (.05);
}
\arrow[urr, Rightarrow, "\eta^r"]
\arrow[d,red, Rightarrow, "\tau^\ell"]
\arrow[dr,red, Rightarrow, "\tau^\ell"]
&&
\tikzmath[yscale=-1]{
\draw (0,0) arc (0:360:.25);
\draw (0,.75) -- +(0,.25) arc (0:180:.25) -- +(0,-.25) arc (-180:0:.25);
\filldraw[red] (0,0) circle (.05);
\filldraw[red] (0,.75) circle (.05);
\filldraw[red] (0,1) circle (.05);
}
\arrow[r,red, Rightarrow, "\ev"]
&
\tikzmath{
\draw (0,0) arc (0:360:.25);
\draw (0,.75) arc (0:360:.25);
\filldraw[red] (0,.75) circle (.05);
}
\arrow[d, Rightarrow, "\epsilon^r"]
\\
\tikzmath{
\draw (0,0) arc (0:360:.25);
\filldraw[red] (-.5,0) circle (.05);
}
\arrow[r, Rightarrow, "\ev^\op"]
\arrow[drr, equals]
&
\tikzmath{
\draw (0,0) -- +(0,.5) arc (0:180:.25) -- +(0,-.5) arc (-180:0:.25);
\filldraw[red] (-.5,0) circle (.05);
\filldraw[red] (-.5,.25) circle (.05);
\filldraw[red] (-.5,.5) circle (.05);
}
\arrow[r, Rightarrow, "\tau^r"]
\arrow[dr, Rightarrow, "\coev^\op"]
&
\tikzmath{
\draw (0,0) -- +(0,.5) arc (0:180:.25) -- +(0,-.5) arc (-180:0:.25);
\filldraw[red] (-.5,0) circle (.05);
\filldraw[red] (-.5,.25) circle (.05);
\filldraw[red] (0,.5) circle (.05);
}
\arrow[dr,red, Rightarrow, "\coev^\op"]
&
\tikzmath{
\draw (0,0) -- +(0,.5) arc (0:180:.25) -- +(0,-.5) arc (-180:0:.25);
\filldraw[red] (0,0) circle (.05);
\filldraw[red] (0,.25) circle (.05);
\filldraw[red] (0,.5) circle (.05);
}
\arrow[ur, Rightarrow, "\eta^r"]
\arrow[d, red, Rightarrow, "\ev"]
&
&
\tikzmath{
\draw (0,.75) arc (0:360:.25);
\filldraw[red] (0,.75) circle (.05);
}
\arrow[dll, equals]
\\
&&
\tikzmath[xscale=-1]{
\draw (0,.75) arc (0:360:.25);
\filldraw[red] (0,.75) circle (.05);
}
\arrow[r, Rightarrow, "\tau^r"]
&
\tikzmath{
\draw (0,.75) arc (0:360:.25);
\filldraw[red] (0,.75) circle (.05);
}
\arrow[uurr, Rightarrow, "\eta^r"]
\end{tikzcd}
\end{center}
Here, the red squares commute by (variants of) Lemma \ref{lem:trace_cup}, and the blue polygon commutes by Proposition \ref{prop:braid_axiom}. The map around the top is the twist automorphism, as discussed above, while the map around the left and bottom is the trace map on the left side of (\ref{eq:axiom1}). 
\end{proof}


\subsection{Equivalence with anchored planar algebras}\label{sec:equiv}

In \cite{HPT2016}, it is shown that there is a correspondence between anchored planar algebras in a braided pivotal category $\cB$ and pointed pivotal module tensor categories over $\cB$. The construction above clearly corresponds to a pointed pivotal module tensor category: choose any self-dual $y \in \End(\cM)$, and take the pointed pivotal module tensor subcategory generated by this object. Moreover, it is clear that any pointed pivotal module tensor category with a self-dual generator can arise this way by Lemma \ref{lem:setup}. The content of Propositions \ref{prop:mult} and \ref{prop:same_traciator} shows that the generating maps agree between these two constructions. It remains to be seen that the above construction defines an anchored planar algebra.

\begin{thm}[Theorem \ref{thm:3dapa}]\label{thm:planar-alg-in-3-cat}
    Suppose we have a planar pivotal 3-category $\cC$, objects $A,B$, morphisms $M:A \to B$ and $M^*:B\to A$, and adjoint pair $F:M\boxtimes_B M^* \to \id_A$, $F^*:\id_A \to M\boxtimes_B M^*$ with isomorphisms
    \[ \tau^\ell:(\id_M\boxtimes -)\circ F \to (-^*\boxtimes \id_M^*)\circ F,\ \ \tau^r:F^*\circ(\id_M\boxtimes -) \to F^*\circ(-^*\boxtimes \id_M^*), \]
    satisfying the compatibilities in
equations (\ref{eq:braid_axiom}) and (\ref{eq:axiom1}). Then for every self-dual endomorphism $x$ in $\End(M)$, we can construct an anchored planar algebra internal to $\End(\id_A)$ which interprets anchored planar tangles via the graphical calculus of $\cC$.
\end{thm}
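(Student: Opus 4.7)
The plan is to reduce the theorem to the correspondence established in \cite{HPT2016} between anchored planar algebras in a braided pivotal category $\cB$ and pointed pivotal module tensor categories over $\cB$. First, I would apply Lemma \ref{lem:setup} to realize the abstract data of $\cC$ concretely: there exists a braided fusion $\cA$, an $\cA$-enriched fusion $\cX$, and an $\cX$-module $\cM$ with $\End(\id_A)\cong\cZ^\cA(\cX)$ and $\End(M)\cong\End^\cA(\cM)$. The given $(F,F^*)$ and half-traciators $(\tau^\ell,\tau^r)$ are then identified with the canonical ones built from this module data by the essential uniqueness properties in Propositions \ref{prop:biadjoint} and \ref{prop:duality}. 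The chosen self-dual $x\in\End(M)$ generates a pointed pivotal module tensor subcategory of $\End(M)$, which is precisely the input required for the HPT2016 correspondence.

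Next, I would assemble the underlying data of the prospective anchored planar algebra. For each $n$, the value on an $n$-boxed anchored disk is $\Tr(x^{\otimes n})\in\End(\id_A)$, where $\Tr$ is the categorified trace of Proposition \ref{prop:trace_rightadj}. The generating operations of the anchored planar operad are then interpreted via the graphical pieces constructed in Section \ref{sec:const}: multiplication from the pair-of-pants (Proposition \ref{prop:mult}), unit from $\eta^\ell$ (Lemma \ref{lem:phitradj}), traciator from the composite of half-twists (Proposition \ref{prop:traciator}), and duality/pivotality inherited from the pivotal structure on $\cM$ and $M$, while the braiding is supplied by $\End(\id_A)\cong\cZ^\cA(\cX)$.

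The verification that these satisfy the anchored planar operad relations is then largely completed by what has already been established. Propositions \ref{prop:mult} and \ref{prop:same_traciator} show that the multiplication and traciator just defined coincide with those of \cite{HPT2015}, and Propositions \ref{prop:braid_axiom} and \ref{prop:axiom1} verify the two nontrivial compatibilities (with braiding and with the pivotal twist) that are required of an anchored planar algebra in the sense of \cite{HPT2016}. Unit compatibility is handled by Lemma \ref{lem:etasCscommute} together with Remark \ref{rem:thirdmult}. Therefore the construction matches, generator by generator and relation by relation, the anchored planar algebra produced from the pointed pivotal module tensor category generated by $x$ via the HPT2016 correspondence, and it inherits the operadic axioms from there.

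The main obstacle is a bookkeeping one: showing that an arbitrary anchored planar tangle evaluates to a well-defined morphism independent of how it is decomposed into the chosen generators inside $\cC$. I would handle this by appealing to the Gray-category coherence cited in Section \ref{sec:background} (\cite{gray_coherence}, \cite{gray_graphical}, \cite{guthmann}), which guarantees that any two factorizations of the same $3$-dimensional diagram yield equal morphisms; combined with the equivalences of generating operations with those of \cite{HPT2015,HPT2016}, this promotes the local checks above to a full interpretation of the anchored planar operad.
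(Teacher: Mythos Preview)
Your proposal has a genuine gap at its very first move. The theorem is stated for an \emph{arbitrary} planar pivotal 3-category $\cC$; nothing in the hypotheses says that $\End(\id_A)$ or $\End(M)$ is fusion, or that $\cC$ sits inside some $\cA$--$\FusCat$. Lemma~\ref{lem:setup} does not ``realize the abstract data of $\cC$ concretely'': given a braided fusion $\cB$ and a $\cB$-enriched fusion $\cY$ it builds a \emph{new} $\cA$, $\cX$, $\cM$ whose endomorphism categories recover $\cB$ and $\cY$; it says nothing about the original $\cC$, its objects $A,B$, its $M$, or---crucially---its given $F$, $F^*$, $\tau^\ell$, $\tau^r$. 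Propositions~\ref{prop:biadjoint} and~\ref{prop:duality} compute a particular adjoint pair and cusps in the concrete model; they contain no uniqueness statement that would force an arbitrary $(F,F^*,\tau^\ell,\tau^r)$ satisfying the hypotheses to match them. Likewise, your appeals to Propositions~\ref{prop:braid_axiom} and~\ref{prop:axiom1} are backwards: those propositions \emph{derive} equations~(\ref{eq:braid_axiom}) and~(\ref{eq:axiom1}) in the specific $\cA$--$\FusCat$ construction, whereas here they are \emph{hypotheses} on the abstract $\cC$ and must be used, not re-proved.

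The paper's proof avoids all of this by working entirely inside $\cC$. It invokes the generators-and-relations presentation of anchored planar algebras from \cite{HPT2016}: objects $\cP[n]$ together with maps $\eta$, $\alpha_i$, $\tilde\alpha_i$, $\omega_{i,j}$ subject to axioms (C1)--(C9). It takes $\cP[n]=F\circ x^{\otimes n}\circ F^*$, builds the generators directly out of the adjunction $(F\dashv F^*)$, the self-duality of $x$, and the half-traciators, and then checks (C1)--(C9) one by one: unit and associativity from the snake equations for $F\dashv F^*$; the cup/cap axioms from naturality; and (C8), (C9) precisely from the assumed compatibilities~(\ref{eq:braid_axiom}) and~(\ref{eq:axiom1}). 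No passage through a concrete model, no HPT correspondence, and no Gray coherence argument for well-definedness is needed, because the finite presentation already encodes all relations. Your route could be salvaged only under extra fusion hypotheses and with a uniqueness lemma you do not have; the direct verification is both shorter and strictly more general.
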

\begin{proof}
In \cite{HPT2016}, the authors provide a small list of generators and relations that are equivalent to those for an anchored planar algebra. Given a braided pivotal category $\cB$, an anchored planar algebra inside $\cB$ is the same as a list of objects $\cP[i] \in \cB$ for each $i$ and generators
\[ \eta:1 \to \cP[0]\qquad \alpha_i:\cP[n+2] \to \cP[n]\qquad \tilde\alpha_i:\cP[n] \to \cP[n+2] \qquad \omega_{i,j}:\cP[n]\otimes \cP[j] \to \cP[n+j] \]
satisfying a list of 9 axioms (C1)$-$(C9).
As $\cC$ is a planar pivotal 3-category, $\End(\id_A)$ is a braided pivotal category. We choose $\cP[i] = F\circ x^n\circ F^*$. The generator $\eta$ is given by the unit of the adjunction. The generators $\alpha_i$ and $\tilde\alpha_i$ are the cup and cap for $x$ (as $x$ is self-dual), evaluated at the $i$th position. The generator $\omega_{i,j}$ is given graphically by the diagram
\[
\tikzmath{
\draw (0,0) arc (-180:0:2*\x and \x) arc (180:0:3*\x) arc (-180:0:2*\x and \x) arc (0:60:6*\x) arc (240:180:{4*\x})
arc (0:540:2*\x and \x) coordinate (b) 
arc (0:-60:4*\x) arc (120:180:6*\x);
\draw (0,0) arc (180:225:2*\x and \x) coordinate (a);
\draw (b) arc (180:225:2*\x and \x) coordinate (c);
\draw (a) to [out=90,in=270] (c);
\draw[draw=none] (a) -- +(-.1,-.2) coordinate (d);
\node at (d) {$\scriptstyle i$};
\draw[dashed] (0,0) arc (180:0:2*\x and \x);
\draw[dashed] (10*\x,0) arc (180:0:2*\x and \x);
\draw (2*\x,-\x) arc (180:85:2*\x);
\draw[dashed] ({4*\x+2*\x*cos(85)},{-\x+2*\x*sin(95)}) .. controls (1,1) .. (3,1.8);
\draw (3,1.8) -- +(-.2,0) arc (-90:-180:.5);
\draw (12*\x,-\x) arc (0:60:6*\x) -- +(-{cos(30)*.4},{.5*.4}) arc (240:180:{3*\x}) -- +(0,.1);
\node at (12*\x,-.5) {$\scriptstyle j$};
\node at (2*\x,-.5) {$\scriptstyle n-i$};
}\,.
\]
Now many of the axioms follow immediately. Axiom (C1) states that $\eta$ is a unit for the multiplication $\omega_{0,j}$ and $\omega_{i,0}$, which follows immediately from the snake equations for the adjunction between $F$ and $F^*$. Axioms (C2), (C3), (C5), and (C6) follow from naturality of cups and caps for $x$ and naturality of the half-traciators. Axiom (C4) follows from naturality of cups and caps and the snake equations. Finally, axiom (C7) is an associativity condition for the multiplication, (C8) is the compatibility with the braiding, and (C9) is the compatibility with the twist. The associativity condition is immediate by properties of adjunctions, and (C8) and (C9) follow from (\ref{eq:braid_axiom}) and (\ref{eq:axiom1}), respectively. Thus we obtain an anchored planar algebra, as claimed.
\end{proof}

\begin{thm}[Theorem \ref{thm:main}]
All anchored planar algebras internal to braided fusion categories can be obtained via the construction in Theorem \ref{thm:planar-alg-in-3-cat}.
\end{thm}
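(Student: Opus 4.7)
The plan is to reduce to the correspondence of \cite{HPT2016} between anchored planar algebras in a braided pivotal fusion category $\cB$ and pointed pivotal $\cB$-module tensor categories $(\cY, y)$ generated by a self-dual object $y$. Combined with Lemma \ref{lem:setup}, this lets us reverse-engineer the input data of Theorem \ref{thm:planar-alg-in-3-cat} from an arbitrary anchored planar algebra.

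First, given an anchored planar algebra $\cP$ internal to a braided fusion category $\cB$, I would invoke \cite{HPT2016} to produce a pointed pivotal $\cB$-module tensor category $(\cY, y)$ with $y$ self-dual whose associated anchored planar algebra is $\cP$. Next, I would apply Lemma \ref{lem:setup} to the pair $(\cB, \cY)$ to obtain a braided fusion category $\cA$, an $\cA$-enriched fusion category $\cX$, and a 1-cell ${}_\cA\cM_\cX$ together with braided, resp.\ monoidal, equivalences $\cZ^\cA(\cX) \cong \cB$ and $\End^\cA(\cM_\cX) \cong \cY$. Let $\tilde y \in \End^\cA(\cM_\cX)$ denote the image of $y$; self-duality is preserved by this equivalence.

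The machinery of Section \ref{sec:const} now furnishes the remaining input to Theorem \ref{thm:planar-alg-in-3-cat}: the biadjoint pair $F, F^*$ of Proposition \ref{prop:biadjoint}, the cusp data of Proposition \ref{prop:duality} witnessing the duality $\cM^\op \cong \cM^*$, and the half-traciators $\tau^\ell, \tau^r$ satisfying equations (\ref{eq:braid_axiom}) and (\ref{eq:axiom1}) by Propositions \ref{prop:braid_axiom} and \ref{prop:axiom1}. Feeding $(\cA-\FusCat, \cA, \cX, \cM, \cM^\op, F, F^*, \tau^\ell, \tau^r, \tilde y)$ into Theorem \ref{thm:planar-alg-in-3-cat} produces an anchored planar algebra $\cP'$ internal to $\End(\id_\cA) \supseteq \cZ^\cA(\cX) \cong \cB$.

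The final step is to identify $\cP$ with $\cP'$ as anchored planar algebras in $\cB$, for which I would trace the generators through the \cite{HPT2016} dictionary on both sides. The unit, cups, and caps match directly from the explicit formulas (\ref{eq:eta_ell})--(\ref{eq:epsilon_r}) together with the pivotal cup and cap on $\tilde y$. The multiplication is identified with the loop-algebra multiplication of \cite{HPT2015} by Proposition \ref{prop:mult} combined with Proposition \ref{prop:trace_rightadj}; the traciator is handled by Proposition \ref{prop:same_traciator}. Since an anchored planar algebra is determined by its generating data together with the axioms (C1)--(C9) recalled in the proof of Theorem \ref{thm:planar-alg-in-3-cat}, agreement of generators yields $\cP \cong \cP'$. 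The main subtlety, and essentially the only step not already contained in Section \ref{sec:const}, is to check that the equivalences $\cZ^\cA(\cX) \cong \cB$ and $\End^\cA(\cM_\cX) \cong \cY$ supplied by Lemma \ref{lem:setup} intertwine the pivotal structures so that the comparison of generators genuinely happens in $\cB$; this follows from the construction in \cite{center}, where the equivalences are induced by the evident embeddings, which are manifestly pivotal.
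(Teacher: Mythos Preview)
Your proposal is correct and follows essentially the same route as the paper: invoke \cite{HPT2016} to pass from $\cP$ to a pointed pivotal module tensor category, realize it via Lemma \ref{lem:setup} inside $\cA\text{-}\FusCat$, run the construction of Section \ref{sec:const} to obtain the data for Theorem \ref{thm:planar-alg-in-3-cat}, and then use Propositions \ref{prop:mult} and \ref{prop:same_traciator} to match generators. Your write-up is in fact more explicit than the paper's (which is only a few lines), and your closing remark about checking pivotal compatibility of the equivalences from Lemma \ref{lem:setup} is a point the paper leaves implicit; one small slip is the line ``$\End(\id_\cA) \supseteq \cZ^\cA(\cX)$''---in the concrete realization of Lemma \ref{lem:setup} one has $\cA=\cX=\cB$, so the relevant endomorphism category is simply $\cB$ and no containment is needed.
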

\begin{proof}
Given an anchored planar algebra $\cP$ in $\cB \cong \End^\cA(\cX)$, by \cite{HPT2016} it corresponds to some pointed pivotal module tensor category $\cY \cong \End^\cA(_\cX\cM)$. Following the construction in Section \ref{sec:const}, we obtain maps satisfying the conditions in Theorem \ref{thm:planar-alg-in-3-cat}, and thus an anchored planar algebra. By Propositions \ref{prop:mult} and \ref{prop:same_traciator}, the generating maps of this anchored planar algebra agree with the generating maps of the construction in \cite{HPT2016}, and so this anchored planar algebra is isomorphic to $\cP$.
\end{proof}


\subsection{Application to rigidity}\label{sec:apps}

The advantage of this construction is that it greatly expands the graphical language of anchored planar algebras. The shapes defined in Section \ref{sec:const} can be used to build and interpret surfaces of higher genus, extending the anchored planar operad. Additionally, abstract categorical tools can be applied to constructions in any anchored planar algebra.

One question this approach answers immediately has to do with the duality pairing of objects of the form $\Tr(y)$. In \cite{HPTunitary}, it was shown that the multiplication gives a duality pairing in the unitary setting, but the general question asked in \cite[Remark 5.4]{HPT2015} was left open. Here, we can answer it.

\begin{cor}
The rigidity of $\Tr(y)$ is witnessed by the adjunction data above; that is,
\[ 
\tikzmath{
\draw (0,0) arc (-180:0:2*\x and \x) arc (180:0:3*\x) arc (-180:0:2*\x and \x) arc (0:60:6*\x) arc (240:180:{4*\x})
arc (0:-180:2*\x and \x) 
arc (0:-60:4*\x) arc (120:180:6*\x);
\draw[dashed] (0,0) arc (180:0:2*\x and \x);
\draw[dashed] (10*\x,0) arc (180:0:2*\x and \x);
\draw[dashed] (5*\x,{\x*sin(60)*10}) arc (180:0:2*\x and \x);
\draw (5*\x,{\x*sin(60)*10}) arc (180:0:2*\x);
\draw[red] (2*\x,-\x) arc (180:0:5*\x and 6*\x);
}
= \epsilon^r\ev_y\epsilon^\ell(\Tr(y^*)\circ\Tr(y))
\]
is a non-degenerate pairing between $\Tr(y)$ and $\Tr(y^*)$.
\end{cor}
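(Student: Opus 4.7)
The plan is to prove non-degeneracy by exhibiting an explicit coevaluation $\delta : 1_\cX \to \Tr(y) \circ \Tr(y^*)$ dual to the given pairing, and then verifying the two snake identities. The natural candidate is the topologically dual diagram, namely the composite
\[
1_\cX \xrightarrow{\eta^r} FF^* \xrightarrow{\coev_y} \Tr(y \circ y^*) \xrightarrow{\delta_{y,y^*}} \Tr(y) \circ \Tr(y^*),
\]
where $\delta_{y,y^*}$ is the comultiplication built from $\eta^\ell$ by reversing the pair of pants used for $\mu$ in Proposition \ref{prop:mult}; equivalently, $\delta$ is the diagram in the corollary reflected upside down. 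Proposition \ref{prop:same_traciator} and Remark \ref{rem:thirdmult} ensure that the various graphical ways of assembling $\delta$ from the primitive cups, caps, and saddles all agree.

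With the copairing in hand, I would stack the two diagrams to form the snake composites $(\id_{\Tr(y)} \otimes \text{pairing}) \circ (\delta \otimes \id_{\Tr(y)})$ and $(\text{pairing} \otimes \id_{\Tr(y^*)}) \circ (\id_{\Tr(y^*)} \otimes \delta)$, and show each equals the identity on $\Tr(y)$, respectively $\Tr(y^*)$. Each stacked surface decomposes into three pieces of adjunction data: an $F \dashv F^*$ snake for the outer tube, an $F^* \dashv F$ snake for the interior cap, and a $y \dashv y^*$ snake for the string running inside. Each collapses via its own triangle identity, and the cross-interactions --- commuting $\eta^\ell$ past $\coev_y$, commuting $\epsilon^r$ past the comultiplication, and so on --- reduce to naturality of cups and caps with respect to bimodule natural transformations, exactly as in the pasting-diagram arguments of Propositions \ref{prop:mult} and \ref{prop:same_traciator}. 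An application of Lemma \ref{lem:etasCscommute} (or one of its dual variants) handles the point where the $\eta^\ell$ from $\delta$ meets the $\epsilon^\ell$ inside the multiplication.

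The main obstacle is bookkeeping in the Gray-categorical pasting diagram: every interaction between the outer-tube adjunctions and the internal $y \dashv y^*$ adjunction must be justified by a concrete naturality square, and the order in which the three triangle identities are applied matters because interchangers intervene. Nonetheless, the topological intuition is transparent --- gluing a pair of pants to its upside-down copy along the two legs and then applying the $y$-snake inside yields a straight cylinder, which is precisely $\id_{\Tr(y)}$ --- so no identities beyond those established above are needed, and the verification proceeds in parallel to the proofs of Propositions \ref{prop:braid_axiom} and \ref{prop:axiom1}.
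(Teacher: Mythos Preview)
Your approach is sound in outline but takes a longer route than necessary, and there is a notational slip. Since $\eta^\ell,\epsilon^\ell$ witness $F\dashv F^*$ while $\eta^r,\epsilon^r$ witness $F^*\dashv F$, the coevaluation out of $1_\cX$ must begin with $\eta^\ell : 1_\cX \to F^*F$ (not $\eta^r$, whose domain is $\id_{\cM^\op\boxtimes_\cA\cM}$), and the comultiplication $\Tr(yy^*)\to\Tr(y)\circ\Tr(y^*)$ is built by inserting $\eta^r$ in the middle, not $\eta^\ell$. You have these two swapped.

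More substantively, the paper's proof is a single sentence. It observes that $\Tr(y) = F^*\circ(\id_{\cM^\op}\boxtimes y)\circ F$ is a composite of three 2-cells, each of which already has an adjoint (namely $F$, $\id_{\cM^\op}\boxtimes y^*$, and $F^*$ respectively), and then invokes the general fact that the adjoint of a composite is the composite of the adjoints, with unit and counit obtained by nesting the individual units and counits. The displayed pairing is visibly this nested counit $\epsilon^r\cdot\ev_y\cdot\epsilon^\ell$, so non-degeneracy is immediate from Proposition~\ref{prop:biadjoint} together with $y\dashv y^*$. No pasting chase is needed, and in particular Lemma~\ref{lem:etasCscommute} plays no role here: that lemma governs the cusp isomorphisms $C,D$ for the duality $\cM^\op\simeq\cM^*$, which do not appear in this pairing. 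Your plan would succeed, but it re-derives the ``composite of adjoints'' fact in situ and brings in machinery (Propositions~\ref{prop:same_traciator}, \ref{prop:braid_axiom}, \ref{prop:axiom1}) that is irrelevant to the statement.
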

\begin{proof}
The adjoint of a composite is the composite of adjoints, with witnesses being the composite of the witnesses for the adjunction, so this follows from Proposition \ref{prop:biadjoint}. Pictorially, the composite on the left decomposes as
\begin{center}
\begin{tabular}{ccccccc}
& & &
$\tikzmath{
\draw (0,0) arc (-180:0:2*\x and \x) arc (0:180:2*\x);
\draw[dashed] (0,0) arc (180:0:2*\x and \x);
}$
\\
& & $\lefttube{250}{90}{\x}{\h}{2}$ &
$\tikzmath{
\draw (0,0) -- +(0,\h*\x) -- +(\h*\x/2,\h*\x) -- +(\h*\x/2,0) -- (0,0);
\draw[dashed] (\h*\x/2,\h*\x/10) -- (\h*\x/10,\h*\x/10) -- (\h*\x/10,\h*\x);
\draw (\h*\x/10,\h*\x) -- +(0,\h*\x/10) -- +(\h*\x/2,\h*\x/10) -- (\h*\x*0.6,\h*\x/10) -- +(-\h*\x/10,0);
\draw[red] (\h*\x/6,0) -- +(0,\h*\x*.75) arc (180:0:\h*\x/12) -- (2*\h*\x/6,0);
}$ &
$\righttube{250}{90}{\x}{\h}{2}$
\\
$\lefttube{250}{90}{\x}{\h}{2}$ &
$\tikzmath{
\draw (0,0) -- +(0,\h*\x) -- +(\h*\x/2,\h*\x) -- +(\h*\x/2,0) -- (0,0);
\draw[dashed] (\h*\x/2,\h*\x/10) -- (\h*\x/10,\h*\x/10) -- (\h*\x/10,\h*\x);
\draw (\h*\x/10,\h*\x) -- +(0,\h*\x/10) -- +(\h*\x/2,\h*\x/10) -- (\h*\x*0.6,\h*\x/10) -- +(-\h*\x/10,0);
\draw[red] (\h*\x/4,0) -- +(0,\h*\x);
}$ & &
$\saddle{260}{60}{0.2}{7}{9}{3}$ & &
$\tikzmath{
\draw (0,0) -- +(0,\h*\x) -- +(\h*\x/2,\h*\x) -- +(\h*\x/2,0) -- (0,0);
\draw[dashed] (\h*\x/2,\h*\x/10) -- (\h*\x/10,\h*\x/10) -- (\h*\x/10,\h*\x);
\draw (\h*\x/10,\h*\x) -- +(0,\h*\x/10) -- +(\h*\x/2,\h*\x/10) -- (\h*\x*0.6,\h*\x/10) -- +(-\h*\x/10,0);
\draw[red] (\h*\x/4,0) -- +(0,\h*\x);
}$ &
$\righttube{250}{90}{\x}{\h}{2}$
\\[30pt]
$\lefttube{250}{90}{\x}{\h}{2}$ &
$\tikzmath{
\draw (0,0) -- +(0,\h*\x) -- +(\h*\x/2,\h*\x) -- +(\h*\x/2,0) -- (0,0);
\draw[dashed] (\h*\x/2,\h*\x/10) -- (\h*\x/10,\h*\x/10) -- (\h*\x/10,\h*\x);
\draw (\h*\x/10,\h*\x) -- +(0,\h*\x/10) -- +(\h*\x/2,\h*\x/10) -- (\h*\x*0.6,\h*\x/10) -- +(-\h*\x/10,0);
\draw[red] (\h*\x/4,0) -- +(0,\h*\x);
}$ &
$\righttube{250}{90}{\x}{\h}{2}$ & &
$\lefttube{250}{90}{\x}{\h}{2}$ &
$\tikzmath{
\draw (0,0) -- +(0,\h*\x) -- +(\h*\x/2,\h*\x) -- +(\h*\x/2,0) -- (0,0);
\draw[dashed] (\h*\x/2,\h*\x/10) -- (\h*\x/10,\h*\x/10) -- (\h*\x/10,\h*\x);
\draw (\h*\x/10,\h*\x) -- +(0,\h*\x/10) -- +(\h*\x/2,\h*\x/10) -- (\h*\x*0.6,\h*\x/10) -- +(-\h*\x/10,0);
\draw[red] (\h*\x/4,0) -- +(0,\h*\x);
}$ &
$\righttube{250}{90}{\x}{\h}{2}$
\\[30pt]
$F$ & $\id_{\cM^\op} \boxtimes y$ & $F^*$ & & $F$ & $\id_{\cM^\op} \boxtimes y$ & $F^*$
\end{tabular}
\end{center}
and notice that the three central maps stacked in the middle are all evaluations.
\end{proof}

\bibliographystyle{alpha}
\bibliography{bibliography.bib}

\end{document}